\newtheorem{theorem}{Theorem}[section]
\newtheorem{definition}[theorem]{Definition}
\newtheorem{proposition}[theorem]{Proposition}
\newtheorem{lemma}[theorem]{Lemma}
\numberwithin{equation}{section}
\theoremstyle{remark}
\newtheorem{remark}[theorem]{Remark}
\newcommand{\inter}{\mathop{\rm int}\nolimits}
\newcommand{\txconst}{{\alpha_1}}
\newcommand{\good}{{\txconst}}
\newcommand{\gap}{{\alpha_2}}
\newcommand{\lap}{\Delta}
\newcommand{\Ber}{{\mathbb Q}^\star}
\newcommand{\EBer}{{\mathbb E}^\star}
\newcommand{\Plr}{\mathsf{P}}
\newcommand{\Elr}{\mathsf{E}}
\newcommand{\Olr}{\Omega}
\newcommand{\piece}{\gamma}
\newcommand{\vv}{v}
\def\comp{{\mathcal C}}
\def\Max{{\mathrm{max}}}
\def\Sec{{\mathrm{sec}}}
\def\supp{\mathop{\rm supp}\nolimits}
\def\dist{\mathop{\rm dist}\nolimits}
\def\diam{\mathop{\rm diam}\nolimits}
\def\ld{\mathop{\rm ld}\nolimits}
\def\tx{\mathop{\tt tx}\nolimits}
\def\Ran{\mathop{\rm Ran}\nolimits}
\def\d{\mathrm{d}}
\def\bbone{\boldsymbol 1 }
\def\bV{\bar{\mathcal V}}
\def\bC{\bar{\mathcal C}}
\def\EO{\mathsf{EO}}
\def\EV{\mathsf{EV}}
\def\EE{\mathsf{E}}
\def\IQ{\mathsf{IQ}}
\def\FF{\mathsf{F}}
\def\DD{\mathsf{T}}
\def\Q{\mathcal F}
\begin{document}
  \title[Giant component left by random walk]{Giant vacant
    component left by a random walk in a random $d$-regular graph}

  \author[J. Černý]{Jiří Černý$^1$}
  \author[A. Teixeira]{Augusto Teixeira$^1$}
  \address{\itshape $^1$ ETH Zurich, Department of Mathematics,
    R\"amistrasse 101, 8092 Zurich, Switzerland}
  \author[D. Windisch]{David Windisch$^2$}
  \address{\itshape $^2$ The Weizmann Institute of Science,
    Faculty of Mathematics and Computer Science, POB 26, Rehovot 76100,
    Israel}
  \thanks{The research of A. Teixeira and D. Windisch was partially supported by
    Swiss National Science Foundation Grant PDFM22-120708/1}
  \keywords{Random walk, vacant set, regular graph, expanders, random interlacement, phase transition}
  \date{\today}

\begin{abstract}
  We study the trajectory of a simple random walk on a $d$-regular graph
  with $d \geq 3$ and locally tree-like structure as the number $n$ of
  vertices grows. Examples of such graphs include random $d$-regular
  graphs and large girth expanders. For these graphs, we investigate
  percolative properties of the set of vertices not visited by the walk
  until time $un$, where $u>0$ is a fixed positive parameter. We show that
  this so-called \textit{vacant} set exhibits a phase transition in $u$ in
  the following sense: there exists an explicitly computable  threshold
  $u_\star \in (0, \infty)$ such that, with high probability as $n$ grows, if
  $u<u_\star$, then the largest
  component of the vacant set has a volume of order $n$,  and if
  $u>u_\star$, then it has a volume of order $\log n$. The critical
  value $u_\star$ coincides with the critical intensity of a random
  interlacement process on a $d$-regular tree. We also show that the
  random interlacements model describes the structure of the vacant set in
  local neighbourhoods.
\end{abstract}

\maketitle

\section{Introduction and main results}

In this work we consider the simple random walk on a graph $G$ chosen among a
certain class of finite regular graphs including, for example, typical
realizations of random regular graphs, or expanders with large girth. The main
object of our study is the complement of the trajectory of the random walk
stopped at a time $u|G|$, for $u>0$, the so-called vacant set, and its
percolative properties.

We show that the vacant set  undergoes the following phase
transition in $u$:  as long as $u < u_\star$, the
vacant set has a unique component with volume of order $|G|$, whereas if
$u > u_\star$, the largest component of the vacant set only has a volume of
order $\log |G|$, with high probability as the size of $G$ diverges.
More importantly, we show that the above phase transition corresponds to
the phase transition in a random interlacement model on a regular tree.
In particular, the critical value $u_\star$ is the same for both models.

The random interlacement (on $\mathbb Z^d$, $d \geq 3$) was recently introduced
by Sznitman \cite{Szn09} to provide a model describing the microscopic
structure of the bulk when considering the large $N$ asymptotics of the
disconnection time of the discrete cylinder
$(\mathbb Z/N\mathbb Z)^{d-1}\times \mathbb Z$ \cite{DS06}, or percolative
properties of the vacant set left by the simple random walk on the torus
$(\mathbb Z/N\mathbb Z)^{d}$ \cite{BS08}. Later, in \cite{Szn09b,Win08}, it was
proved that the random interlacements indeed \emph{locally} describe this
microscopic structure. In \cite{Szn09,SS09} it was shown that the random
interlacement undergoes a phase transition for a non-trivial value
$u_\star(\mathbb Z^d)$ of the parameter $u$ driving its intensity.
The best bounds on the disconnection time known from
\cite{Szn09d} and \cite{Szn09e} involve parameters derived from random
interlacements and the upcoming work \cite{TW10} connects distinct
regimes for random interlacements with distinct regimes for the vacant
set left by random walk on the torus. It can currently not be proved
that the critical value $u_\star(\mathbb Z^d)$ for random
interlacements is itself connected with a critical value for the
vacant set on the torus or for the disconnection time.

We provide such a connection in our model. This is possible for the following
reasons: For the considered graphs $G$,  large neighbourhoods of  typical
vertices of $G$ are isomorphic to a ball in a regular tree, and, as we will
show, the corresponding local microscopic model is the random interlacement on
such a tree. Connected components of this interlacement model admit a
particularly simple description in terms of a branching process, and its
critical value $u_\star$ is explicitly computable (see \eqref{e:ustar}),
giving us a good local control of configurations of the vacant set on $G$.
Good expansion properties of $G$
then allow us to extend the local control to a global one.

\medskip

We now come to the precise statements of our results. We consider a sequence of
finite connected graphs $G_k=(V_k, \mathcal E_k)$ such that the number $n_k$ of
vertices in $V_k$ tends to infinity as $k \to \infty$. We are principally
interested in the case where $G_k$ is a sequence of $d$-regular random graphs,
$d\ge 3$, or of $d$-regular expanders with large girth (such as, for example,
  Lubotzky-Phillips-Sarnak graphs \cite{LPS88}). As we shall show below, these
two classes of graphs satisfy the following assumptions, which are the only
assumptions we need in order to prove our main theorems. We assume that for
some $d \geq 3$, $\txconst\in(0,1)$, and all~$k$,
\begin{equation*}
  \tag{A0}
  \label{a:reg}
  \text{$G_k$ is $d$-regular, that is all
    its vertices have degree $d$, and}\\
\end{equation*}
\begin{equation*}
  \tag{A1}
  \label{a:tx1}
  \parbox{.8\textwidth}{for any $x \in V_k$, there is at most one cycle
    contained in the ball with radius $\txconst \log_{d-1} n_k$ centered at $x$.}
\end{equation*}
We also assume that the
spectral gap $\lambda_{G_k}$ of $G_k$ (we recall the definition in
  \eqref{def:gap} below) is uniformly bounded from below by a constant
  $\gap>0$, that is
\begin{equation*}
  \tag{A2}
  \label{a:gap}
  \lambda_{G_k} > \gap > 0, \text{ for all $k \geq 1$}.
\end{equation*}
Under \eqref{a:reg}, this final assumption is equivalent to assuming that $G_k$
are expanders, see \eqref{eq:cheeger}. Note that in general \eqref{a:tx1} does
not imply \eqref{a:gap}, see Remark~\ref{r:nonequivalence}.

We consider a continuous-time random walk on $G_k$. More precisely, we write $P$
for the  canonical law on the space $D([0,\infty),V_k)$ of cadlag functions
from $[0, \infty)$ to $V_k$ of the continuous-time simple random walk on $G_k$
with i.i.d.~mean-one exponentially distributed waiting times and uniformly
distributed starting point. We use $(X_t)_{t \geq 0}$ to denote the canonical
coordinate process. For a fixed parameter $u\geq 0$ not depending on $k$, we
define the \textit{vacant set}  as the set of all vertices not visited by the
random walk until time $un_k$:
\begin{equation}
\label{e:Vuk}
  \mathcal V_k^u = \{x\in V_k: x \neq X_t,
  \textrm{ for all } 0 \leq t \leq un_k \}.
\end{equation}
We use $\comp ^u_\Max  \subset V$ to denote the largest connected component
of $\mathcal V_k^u$.

The following theorems are the main results of the
present paper.
The critical parameter $u_\star$ in the statements coincides with the
critical parameter for random interlacements on the infinite $d$-regular
tree $\mathbb T_d$, which, according to \cite{Tei09}, equals
\begin{equation}
  \label{e:ustar}
  u_\star= \frac{d(d-1) \ln(d-1)}{(d-2)^2}.
\end{equation}

\begin{theorem}[subcritical phase]
  \label{t:subcritical}
  Assume \eqref{a:reg}--\eqref{a:gap}, and fix $u>u_\star.$
  Then for every $\sigma >0 $  there exist constants
  $K(d,\sigma,u,\txconst, \gap), C(d,\sigma,u,\txconst, \gap)<\infty$, such that
  \begin{equation}
    P[|\comp^u_\Max|\ge K\ln n_k]\le C n_k^{-\sigma },
    \textrm{ for all } k\ge 1.
  \end{equation}
\end{theorem}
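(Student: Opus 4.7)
The plan is to reduce the claim to the subcritical phase of random interlacements on the $d$-regular tree $\mathbb T_d$ via a local coupling, and then to translate the pointwise control thus obtained into a uniform bound on component sizes by a union bound.

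Fix a mesoscopic scale $r=c_1\log n_k$ with $c_1$ chosen small enough that $r<\txconst\log_{d-1}n_k$; by \eqref{a:tx1} every ball $B(x,r)$ then contains at most one cycle and is, for the purposes of the coupling below, essentially isomorphic to a ball in $\mathbb T_d$. Let $\mathcal C_r(x)$ denote the connected component of $x$ in the subgraph induced by $\mathcal V_k^u\cap B(x,r)$, and declare $x$ to be \emph{good} if $|\mathcal C_r(x)|\le K\log n_k$ and $\mathcal C_r(x)\cap\partial B(x,r)=\emptyset$. If every $x\in V_k$ is good, then for any $y\in\mathcal V_k^u$ the global vacant cluster of $y$ coincides with $\mathcal C_r(y)$ (it cannot escape $B(y,r)$ without placing a vacant vertex on $\partial B(y,r)$), and hence $|\comp^u_\Max|\le K\log n_k$ deterministically. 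It therefore suffices to show
\begin{equation*}
P[x\text{ is bad}]\le C\,n_k^{-\sigma-1}\quad\text{for every }x\in V_k,
\end{equation*}
since a union bound over the $n_k$ vertices then yields the theorem.

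The proof of this pointwise estimate has two main ingredients. The first is a \emph{local coupling}: for each $x$, construct a joint law of $\mathcal V_k^u\cap B(x,r)$ and of the vacant set $\mathcal V^{u'}$ of random interlacement on $\mathbb T_d$ at an intermediate level $u'\in(u_\star,u)$, restricted to the tree ball identified with $B(x,r)$, such that $\mathcal V_k^u\cap B(x,r)\subseteq\mathcal V^{u'}$ except on an event of probability at most $n_k^{-\sigma-2}$. One decomposes the trajectory of $(X_t)_{0\le t\le un_k}$ into successive excursions from a larger sphere $\partial B(x,R)$ (with $R$ chosen just below $\txconst\log_{d-1}n_k$) into $B(x,r)$. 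The spectral gap assumption \eqref{a:gap} supplies quantitative mixing estimates which yield (i) concentration of the total number of such excursions around its mean, and (ii) near-independence of successive excursions together with near-uniformity of their entry points on $\partial B(x,R)$. These two facts allow the excursion sequence to be approximated by i.i.d.\ excursions from the stationary distribution, and the latter matches precisely the Poissonian excursion structure underlying random interlacement on $\mathbb T_d$ at level $u'$.

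The second ingredient is a \emph{tree estimate}: for random interlacements on $\mathbb T_d$ at any level strictly above $u_\star$, the vacant cluster of a vertex is described by a subcritical branching process (cf.~\cite{Tei09}), so its size admits exponential tails and in particular
\begin{equation*}
P\bigl[\text{vacant cluster of the root in }\mathcal V^{u'}\text{ has size }>K\log n_k\bigr]\le n_k^{-c(u')K}.
\end{equation*}
Since $u>u_\star$, one can fix $u'\in(u_\star,u)$; then choosing $K$ (and $c_1$) large enough makes both this bound and the coupling error at most $\tfrac12 C\,n_k^{-\sigma-1}$, yielding the required pointwise estimate. The principal obstacle is the construction of the local coupling: matching the correlated excursion sequence of the walk with the independent Poissonian excursions underlying random interlacement calls for a delicate use of the spectral gap to control mixing of entry points and concentration of excursion counts, and of \eqref{a:tx1} to ensure that the law of an excursion inside $B(x,r)$ is close to its tree analogue. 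This is where both hypotheses \eqref{a:tx1} and \eqref{a:gap} enter most crucially.
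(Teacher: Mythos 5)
Your proposal diverges substantially from the paper's, and it contains a gap that is fatal for $u$ close to $u_\star$ (and, for any fixed $u$, for $\sigma$ large).

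The issue is the requirement $\mathcal C_r(x)\cap\partial B(x,r)=\emptyset$ in the definition of a good vertex. You are forced to take $r=c_1\log n_k$ with $c_1\ln(d-1)<\txconst$, because \eqref{a:tx1} only gives you control over balls of radius below $\txconst\log_{d-1} n_k$. Now consider the tree estimate: the vacant cluster of the root under $\Ber_{u'}$ is a branching process with mean offspring $m_{u'}=(d-1)^{1-u'/u_\star}<1$, and the probability it survives to generation $r$ is of order $m_{u'}^{\,r}=n_k^{-c_1(u'/u_\star-1)\ln(d-1)}$, by Kolmogorov's asymptotics for subcritical Galton--Watson. The exponent $c_1(u'/u_\star-1)\ln(d-1)$ is bounded above by $\txconst(u'/u_\star-1)$, a quantity fixed once $u$ is fixed (and vanishing as $u\downarrow u_\star$). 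So $P[\mathcal C_r(x)\cap\partial B(x,r)\neq\emptyset]$ decays only like $n_k^{-c(u)}$ with $c(u)$ fixed and potentially smaller than $1$. This is a lower bound on $P[x\text{ is bad}]$ (modulo the constant-order probability that $x$ itself is vacant), so the pointwise estimate $P[x\text{ is bad}]\le C n_k^{-\sigma-1}$ cannot hold for all $\sigma>0$, and the union bound over $n_k$ vertices does not even give $o(1)$ when $u$ is close to $u_\star$. Increasing $K$ helps the size estimate but has no effect on this boundary-escape probability; increasing $c_1$ is capped by \eqref{a:tx1}. In short, a purely local-to-global argument based on a mesoscopic radius that is constrained to stay below $\txconst\log_{d-1}n_k$ cannot produce an arbitrarily large polynomial exponent.

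The paper sidesteps this precisely by not requiring the cluster to be confined to a ball. Instead it runs a breadth-first search on the component of $x$ in the vacant set left by segments (Proposition~\ref{p:onecluster}), bounds the conditional probability of each newly explored vertex being vacant via Proposition~\ref{p:c}, and couples the queue length to a random walk with negative drift. The resulting failure probability is of order $\exp(-cK\ld n)=n^{-cK}$, whose exponent scales with the \emph{target size parameter} $K$ rather than with a geometric radius; hence it can be pushed below $n^{-\sigma-1}$ by taking $K$ large. This is the essential structural difference. Two further remarks: (i) the interlacement coupling you envisage does appear in the paper (Proposition~\ref{pr:dominat}), but it only achieves error $n^{-2\beta}$ with $\beta<\txconst/100$, again a fixed polynomial rate, and the paper uses it only in the supercritical and uniqueness arguments where such a rate suffices; (ii) for vertices whose ball contains the one permitted cycle, the identification with a tree ball is not an isomorphism, and the paper handles the resulting breakdown of the branching-process picture via Proposition~\ref{p:mrproper} and Lemma~\ref{l:logsizesets}, which bound the number of ``improper'' BFS steps--your outline does not address this point.
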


\begin{theorem}[supercritical phase]
  \label{th:usmall}
  Assume \eqref{a:reg}--\eqref{a:gap}, and fix $u<u_\star$.
  Then for every $\sigma >0$ there exist constants
  $\rho(d,\sigma,u,\txconst, \gap) \in (0,1)$ and
  $C(d,\sigma,u,\txconst, \gap)<\infty$, such that
  \begin{equation}
    P\bigl[ |\comp^u_\Max| \geq \rho n_k \bigr] \geq 1 - C n_k^{-\sigma},
    \textrm{ for all } k\ge 1.
  \end{equation}
\end{theorem}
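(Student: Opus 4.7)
The argument has three ingredients: (i) a local description of $\mathcal V_k^u$ by random interlacements on the $d$-regular tree $\mathbb T_d$, yielding a positive density of vertices lying in a ``local giant''; (ii) concentration of this density via a variance estimate powered by the spectral gap \eqref{a:gap}; and (iii) a sprinkling argument that merges the local giants into one global vacant component using expansion.

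For (i), fix $u'\in (u,u_\star)$; since $\mathcal V_k^u\supset \mathcal V_k^{u'}$ it suffices to produce a giant component of $\mathcal V_k^{u'}$. By \eqref{a:tx1}, the ball $B(x,r)$ of any fixed radius $r$ is isomorphic to a ball in $\mathbb T_d$ for every $x$ and all $k$ large. From the local coupling of the vacant set with the interlacement vacant set at level $u'$ on $\mathbb T_d$ (established earlier in the paper, where \eqref{a:gap} is used to control the entry distribution of excursions into $B(x,r)$), the event that $x$ belongs to a vacant cluster of $B(x,r)$ reaching $\partial B(x,r)$ has probability at least some $\theta=\theta(u',r,d)>0$, uniformly in $k$ and in $x$. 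Call such $x$ \emph{good}. For (ii), let $N$ count the good vertices, so $\EE N\ge \theta n_k/2$ for $k$ large. Pairs $(x,y)$ with $\dist(x,y)>2r$ have good-events depending on excursions through disjoint balls, which are nearly independent by the mixing bound coming from \eqref{a:gap}. This yields $\Var N=o(n_k^2)$, and an analogous higher-moment computation produces $N\ge \theta n_k/4$ with probability $1-Cn_k^{-\sigma}$.

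The main step is (iii): turning many local giants into a single global cluster. Fix $u''\in(u,u')$ and decompose the walk trajectory up to time $u'n_k$ into a ``bulk piece'' of length $u''n_k$ and a ``sprinkling piece'' of length $(u'-u'')n_k$. By the mixing estimate from \eqref{a:gap} the sprinkling piece is close in law to a fresh walk from the stationary distribution, nearly independent of the bulk. Running the argument of (i)--(ii) for the bulk alone furnishes a set $\mathcal G$ of at least $\theta n_k/4$ vertices whose local vacant clusters $(\Gamma_x)_{x\in\mathcal G}$ in $\mathcal V_k^{u''}$ reach $\partial B(x,r)$. For any $A\subset\mathcal G$ of size $\ge \rho_0 n_k$, expansion \eqref{a:gap} provides many edge-disjoint short paths between $\bigcup_{x\in A}\Gamma_x$ and $\bigcup_{x\in \mathcal G\setminus A}\Gamma_x$; an entropy bound over candidate paths, using only the $O(n_k)$ vertices traversed by the sprinkling piece, shows that at least one such path avoids the sprinkling trajectory and hence lies in $\mathcal V_k^{u'}$. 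Iterating this merging step over $O(\log n_k)$ scales leaves all but $o(|\mathcal G|)$ good vertices in one component of $\mathcal V_k^{u'}$, so that $|\comp_\Max^{u'}|\ge \rho n_k$ whp.

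The main obstacle is this last merging step: the spectral gap \eqref{a:gap} only guarantees expansion of \emph{arbitrary} vertex subsets of $G_k$, whereas we need to propagate expansion through the random vacant set $\mathcal V_k^{u'}$. The sprinkling device bridges this gap: by conditioning on the bulk trajectory and exposing bridging paths using only a tiny extra piece of the walk, one reduces the question to a union bound over polynomially many candidate configurations, each of which has the right probability thanks to the local tree structure \eqref{a:tx1} and the near-independence provided by mixing.
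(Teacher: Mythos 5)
Your steps (i)--(ii) are broadly in the spirit of the paper's Section 6 (local comparison with interlacements on $\mathbb T_d$ plus a concentration argument), although two quantitative points matter there: the paper works with balls of radius $\beta\ld n$, so the local clusters are \emph{mesoscopic} (size $n^{v_{u(1+\epsilon)}\beta}$) rather than of bounded size, and this is not cosmetic --- with constant-radius clusters you would have $\Theta(n_k)$ clusters and hence $2^{\Theta(n_k)}$ partitions to union over in step (iii), which no bound of the proposed type can beat; the polynomial cluster size is exactly what reduces the entropy to $2^{O(n^{1-v\beta})}$.

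The genuine gap is in step (iii): your sprinkling runs in the wrong direction. You build the clusters $\Gamma_x$ from the bulk $X_{[0,u''n_k]}$ and then argue that some expansion-provided path avoids the \emph{sprinkling piece} $X_{[u''n_k,u'n_k]}$ ``and hence lies in $\mathcal V^{u'}_k$''. But such a path, supplied by \eqref{eq:cheeger} as a path in $G_k$, must also avoid the bulk trajectory, and for that you give no argument; conditionally on the clusters existing (an event about the bulk), the law of the rest of the bulk is intractable, which is precisely the difficulty the paper flags when it rejects conditioning on $\mathcal V^{u_k}_k$ and using a later portion of the walk. More fundamentally, extra trajectory can only \emph{occupy} vertices, so the sprinkling piece cannot ``expose bridging paths'': the fresh randomness in a sprinkling must come from \emph{removing} trajectory. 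This is why the paper first builds the mesoscopic clusters at a slightly \emph{increased} level $u_n>u$, under the piecewise-independent measure of Section 4, and then deletes whole segments $Y^i$ with i.i.d.\ Bernoulli$(n^{-2\delta})$ variables; each of the $\Theta(n)$ disjoint connections between two cluster unions meets only $O(1)$ segments and so becomes vacant with probability $n^{-O(\delta)}$, independently over disjoint connections, giving failure probability $\exp(-cn^{1-O(\delta)})$ per partition, which beats the $2^{O(n^{1-v\beta})}$ entropy. Deleting segments also disconnects the trajectory, which is why the long-range bridges $Z^{i,j}$ of \eqref{eq:ZgivenY} must be laid down beforehand and the clusters shown robust against them (Lemma~\ref{lm:lotstrings}); your scheme has no counterpart of either ingredient, and its merging step as stated does not go through.
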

For the statement on the uniqueness of the giant component we denote the second
largest component of $\mathcal V_k^u$ by $\comp^u_\Sec$.
\begin{theorem}[supercritical phase--uniqueness]
  \label{t:secondcomponent}
  Assume \eqref{a:reg}--\eqref{a:gap}, and fix $u<u_\star.$
  Then for every $\kappa >0$,
  \begin{equation}
    \lim_{k \to \infty} P\bigl[ |\comp^u_\Sec| \geq \kappa  n_k \bigr] =0.
  \end{equation}
\end{theorem}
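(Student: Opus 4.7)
The plan is to sandwich the second-largest component away from macroscopic size by controlling the total mass of ``large'' vacant clusters at level~$u$. Since the random interlacements on $\mathbb T_d$ at intensity $u<u_\star$ admit a unique infinite cluster, the local coupling between $\mathcal V^u_k$ and tree interlacements developed elsewhere in the paper forces essentially every vertex that ``locally looks giant'' to belong to the global giant component $\comp^u_\Max$; there is then no macroscopic mass left for $\comp^u_\Sec$.

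Concretely, pick $R=R_k\to\infty$ with $R_k\ll\txconst\log_{d-1}n_k$, so that by~\eqref{a:tx1} the ball $B(v,R_k)$ in $G_k$ is tree-like for most $v\in V_k$. Call $v\in V_k$ \emph{$R$-good} if $v\in\mathcal V^u_k$ and the cluster of $v$ in $\mathcal V^u_k\cap B(v,R_k)$ reaches $\partial B(v,R_k)$. The local random-interlacements coupling identifies the probability of $R$-goodness at a typical vertex with the survival probability $\theta_R(u)$ of the associated branching process on $\mathbb T_d$, which converges to $\theta(u)>0$ as $R\to\infty$ for $u<u_\star$. A first moment gives $\mathbb E[N_R]=\theta(u)n_k+o(n_k)$, where $N_R$ denotes the number of $R$-good vertices; a second-moment bound, using~\eqref{a:gap} to decorrelate $R$-goodness at vertices at graph distance $\gg R_k$, yields the concentration $N_R=\theta(u)n_k+o(n_k)$ with probability $1-O(n_k^{-\sigma})$. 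Since any vacant component of size $\geq 2R_k$ consists, up to a $O(R_k)$ boundary layer, entirely of $R$-good vertices, the assumption $|\comp^u_\Sec|\geq\kappa n_k$ forces $|\comp^u_\Max|+|\comp^u_\Sec|\leq N_R+o(n_k)\leq(\theta(u)+\epsilon)n_k$; combining with a matching lower bound $|\comp^u_\Max|\geq(\theta(u)-\epsilon)n_k$ then gives $|\comp^u_\Sec|\leq 3\epsilon n_k$, and letting $\epsilon\downarrow 0$ closes the argument.

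The main obstacle is to secure the sharp constant $\theta(u)$ on the lower-bound side: as stated, Theorem~\ref{th:usmall} only produces some positive $\rho$, not $\theta(u)-\epsilon$, so one has either to reopen the proof of Theorem~\ref{th:usmall} and track the correct asymptotic constant, or to design a fallback argument that bypasses the sharp lower bound. A natural fallback argues by contradiction from $|\comp^u_\Sec|\geq\kappa n_k$: the Cheeger inequality arising from~\eqref{a:gap} produces $\gtrsim\kappa n_k$ visited vertices on the vertex boundary of $\comp^u_\Sec$, and a sprinkling comparison between $\mathcal V^u_k$ and $\mathcal V^{u_0}_k$ for $u_0<u$ slightly smaller, combined with the existence of the giant $\comp^{u_0}_\Max$ from Theorem~\ref{th:usmall} and a Chernoff bound on the number of distinct vertices visited in the window $[u_0n_k,un_k]$, should preclude this configuration. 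Either route puts the main weight on the second-moment / decorrelation work enabled by~\eqref{a:gap}, which is where I would expect the bulk of the technical difficulty to lie.
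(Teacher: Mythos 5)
Your overall shape (local branching--process comparison plus a global connect\slash count argument) is in the right family, but neither of your two routes closes, and the gaps are substantive. Route 1 hinges on the matching lower bound $|\comp^u_\Max|\ge(\theta(u)-\epsilon)n_k$, i.e.\ a law of large numbers for the giant component. As you yourself note, Theorem~\ref{th:usmall} only yields $|\comp^u_\Max|\ge\rho n_k$ for some unspecified $\rho$, and nothing in the paper (nor any easy strengthening of its supercritical argument) produces the sharp constant $\theta(u)$; so route 1 is an announcement of a much stronger result rather than a proof. In addition, the ``second-moment bound using \eqref{a:gap} to decorrelate $R$-goodness at distant vertices'' is not routine: the single random-walk trajectory induces long-range dependence that the spectral gap alone does not remove, and the paper has to pass to the piecewise independent measure $Q$ and use McDiarmid-type concentration to get any such concentration statement.

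Route 2 runs into the obstruction the paper explicitly flags in the introduction: sprinkling by deleting the time window $X_{[u_0n_k,\,un_k]}$ requires the conditional law of that window given $\mathcal V^u_k$ (or given the event that $\mathcal V^u_k$ has two macroscopic components), which is intractable. The paper instead sprinkles at the level of the piecewise independent measure, deleting i.i.d.-selected \emph{segments} $Y^i$ to pass from a slightly larger parameter $u_n>u$ \emph{down} to $u$, having pre-inserted long-range bridges so that a genuine nearest-neighbour trajectory of length $un$ can still be extracted. More importantly, even with the correct sprinkling your argument lacks the combinatorial control that makes the union bound work: the sprinkling estimate only shows that two \emph{fixed} sets of size $\gtrsim\kappa n$ fail to connect with probability $\exp(-cn^{1-\gamma'})$, so one must bound the number of ways a putative second giant can arise. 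The paper does this via the classification of vertices into small\slash proper\slash bad and Lemma~\ref{l:bad} (absence of intermediate clusters): with high probability all but $o(n)$ vertices of any component of size $\ge\kappa n$ of $\mathcal V^u_k$ already lie in one of only $O(n^{1-\beta v_+})$ mesoscopic clusters of the level-$u_n$ configuration, so the union over partitions has only $2^{O(n^{1-\beta v_+})}$ terms. Without such a statement a macroscopic component could a priori be assembled from $\Theta(n)$ microscopic clusters plus sprinkled vertices, giving $2^{\Theta(n)}$ terms, which the sprinkling estimate cannot beat. This intermediate-cluster lemma, proved via the extinction-probability comparison $q_--q_+\to0$ for the dominating and dominated branching processes, is the genuinely new ingredient of the paper's proof and is absent from your proposal.
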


From the last theorem it follows that there exists a function $f$ satisfying
$f(n)=o(n)$ and $P[|\comp^u_\Sec| \leq f(n_k)]\to 1$. More information on
the asymptotics of  $f(n)$ could be obtained from our techniques. However, they
are not sufficient to prove $f=O(\log n)$, which is the conjectured size of
$\comp^u_\Sec$, based on the behaviour of Bernoulli percolation.

Let us now comment on related results. The size of the vacant components left
by a random walk on a finite graph has so far only been studied by Benjamini
and Sznitman in \cite{BS08} for $G_k$ given by a $d$-dimensional integer torus
with large side length $k$ and sufficiently large dimension $d$. In this case,
the authors prove that the vacant set has a suitably defined unique giant
component occupying a non-degenerate fraction of the total volume with
overwhelming probability, provided $u>0$ is chosen sufficiently small. Their
work does not prove anything, however, for the large $u$ regime, let alone any
results on a phase transition in $u$. Our results are the first ones to
establish such a phase transition for a random walk on a finite graph.
Moreover, our results provide some indication that a phase transition occurs
for random walk on the torus as well, and that the critical parameter
$u_\star({\mathbb Z}^d)$ for random interlacements on ${\mathbb Z}^d$ should
play a key role.

A similar phase transition was proved for Bernoulli percolation on various
graphs: first by Erd\H os and R\'enyi \cite{ER60} on the complete graph, and
more recently  on large-girth expanders in \cite{ABS04}, as well as on many
other graphs satisfying a so-called triangle condition \cite{BCvdHSS05a}. For
our results the paper \cite{ABS04} is the most relevant, some of our proofs
build on techniques introduced there. A very precise description of the
Bernoulli percolation on random regular graphs  was recently  obtained  in
\cite{NP07,Pit08}.

\medskip

Let us now comment on the proofs of our results. For most of the arguments, we
do not work with the law $P$ of the random walk, but with a different
measure $Q$ on $D([0,\infty),V_k)$. The trajectory of the canonical
process $X$ under $Q$  is constructed from an  i.i.d.~sequence
$(Y^i)_{i\in \mathbb N}$, of uniformly-started random walk trajectories of
length $L=n_k^\gamma$, for $\gamma <1$, called \emph{segments}. To create a
nearest-neighbour path, the endpoint of segment $Y^i$ and the starting point of
segment $Y^{i+1}$ are connected using a \emph{bridge} $Z^i$, $i\in \mathbb N$,
which is a random walk bridge of length $\ell=\log^2 n_k$. Since $\ell$ is much
larger than the mixing time of the random walk on $G_k$,  $Q$ provides
a very good approximation of $P$, see Lemma~\ref{l:partsbridges}.

The set $\bV^u_k=V_k\setminus \cup_{i<\lfloor un_k/(L+\ell) \rfloor} \Ran Y^i$,
the so-called \emph{vacant set left by segments}, plays a particular role in
our proofs. It is a complement of `a cloud of independent random walk
trajectories', similar to the vacant set of a random interlacement. Observe
that $\bV^u_k$ is an enlargement of $\mathcal V^u_k$.

To prove Theorem~\ref{t:subcritical}, we analyse a breadth-first search
algorithm exploring one component of the set  $\bV^u_k$. We show that this
algorithm is likely to terminate in no more than $K \ln n_k$ steps. To prove
this, we need to control the probability that a (not yet explored) vertex $y$
is found to be vacant at a particular step of the algorithm. The main
difficulty is that, unlike in Bernoulli site percolation models, this event is
not independent of the past of the algorithm. We will derive an estimate of the
form (see Proposition~\ref{p:c})
\begin{equation}
  \label{eq:approximate}
  \Plr [y\notin \Ran Y^i| A\cap \Ran Y^i=\varnothing] \sim
  f(d,u)^{(L+\ell)/(un)},
\end{equation}
where $A$ will be the part of ${\bar {\mathcal V}}^u_k$ already explored by the
algorithm.  The explicitly computable quantity $f(d,u)$ appears in the study of
random interlacement on the infinite tree ${\mathbb T}_d$ at level $u$
\cite{Tei09}, and it equals the probability that a given vertex $z$ (different
  from the root of the tree) is vacant, given its parent in the tree is vacant.

The estimate \eqref{eq:approximate} will imply that the probability of $y$
being vacant given the past of the algorithm is well approximated by $f(d,u)$.
Since for $u>u_\star$ we have $f(d,u) < 1/(d-1)$,  the considered breadth-first
search algorithm can be controlled by a sub-critical branching process,
yielding Theorem~\ref{t:subcritical}.

There is an additional difficulty coming from the fact that the estimate
\eqref{eq:approximate} holds only under suitable restrictions on the set $A$
and the vertex $y$ (see \eqref{e:propercone}). These restrictions are however
always satisfied for a large majority of the steps of the algorithm, as we will
show in Proposition~\ref{p:mrproper}.

\medskip

We now comment on the proof of Theorem~\ref{th:usmall}. This proof consists of
the following two steps: first we show that for some slightly larger parameter
$u_k \in (u,u_\star)$, there are many components of $\mathcal{V}^{u_k}_k$
having volume at least $n_k^\delta$, for some $\delta>0$. Then, we use a
sprinkling technique, based on the following heuristic idea: we reduce $u_k$ to
$u$ and prove that with high probability, the mentioned components merge into a
cluster of size at least $\rho n_k$, cf.~\cite{ABS04}.

For the first step, we use the fact that $\bar{\mathcal{V}}^{u_k}_k$ can be
locally compared with the vacant set of  random interlacements on a $d$-regular
tree. This is proved in Proposition~\ref{pr:dominat}, which again uses an
approximation of type \eqref{eq:approximate}, see \eqref{e:perc}.  Since
$u_k < u_\star$,  the random interlacement at level $u_k$ is  super-critical,
yielding the existence of components  of volume $n_k^\delta$ in $G_k$.
Lemma~\ref{lm:lotstrings} then implies that going from $\bV^{u_k}_k$ to
$\mathcal V^{u_k}_k$ (by inserting the bridges $Z^i$) does not destroy these
components.

Regarding the second step,  it is by no means obvious how to  perform a
sprinkling as mentioned above. Indeed, a simple deletion of the last part
$X_{[un_k, u_k n_k]}$ of the trajectory would require us to deal with the
distribution of the set $X_{[un_k,u_k n_k]}$ given $\mathcal V^{u_k}_k$, which
seems difficult. Instead, we perform the sprinkling in the manner natural for
random interlacements (cf.~\cite{Szn09c}): we  remove some segments $Y^i$
independently at random.

The deletion of segments, however, disconnects the trajectory of the process.
We bypass this problem by adding extra bridges before the sprinkling (cf.
  \eqref{eq:ZgivenY} and Lemma~\ref{lm:lotstrings} again), so that even after
the deletion of some segments, we can extract a nearest-neighbour trajectory of
length at least $un$, with high probability.

We then use the expansion properties (cf.~\eqref{eq:cheeger}) of our graph to
show that the sprinkling construction merges some of the clusters of size
$n^\delta_k$ into a giant component of size at least $\rho n_k$.

\medskip

The proof of the uniqueness, that is of Theorem~\ref{t:secondcomponent}, again
combines sprinkling with the local comparison with random interlacements. Using
this comparison and the branching process approximation of the random
interlacement on the tree, we will show that at level $u_k$ there are, with a
high probability, only $o(n_k)$ vertices contained in vacant clusters of size
between $\ln^2 n_k$ and $n_k^c$, for some $c\in (0,1)$, see Lemma~\ref{l:bad}
for the exact formulation. This statement is a weaker version of the so-called
`absence of components of intermediate size' which is usually proved for
Bernoulli percolation.

This will allow us to show that any component of $\mathcal V_k^u$ of size at
least $\kappa n_k$, should contain at least $\kappa n_k/2$ vertices $x$ being
in vacant components of size at least $n_k^c$ at level $u_k$. The sprinkling
then shows that any two groups of size $\kappa n_k/2$ of such vertices are
connected in $\mathcal V_k^u$, excluding two giant components with a high
probability.

\medskip

We close this introduction with two remarks concerning our assumptions.
\begin{remark}
  The assumptions \eqref{a:reg}--\eqref{a:gap} are designed in order to include
  two classes of $d$-regular graphs:  expanders with girth larger than
  $c \log |V_k|$, and typical realizations of a random $d$-regular graph. In the
  random $d$-regular graph case these assumptions also help us separate the
  randomness of the graph from the randomness of the walk.

  The fact that the typical realization of the random $d$-regular graph
  satisfies assumption \eqref{a:tx1} follows from Lemma~2.1 of
  \cite{LS09}, where they show it for $\good=\tfrac 15$. To see that
  \eqref{a:gap} holds one can use the estimate on the second  eigenvalue
  of the adjacency matrix $A$ of the random $d$-regular graph of Friedman
  \cite{Fri08} (or older results, e.g. \cite{BS87,Fri91}, which however
    only provide estimates for $d$ even and not too small). Indeed, in
  \cite{Fri08} it is shown that this second eigenvalue is
  $2\sqrt{d-1}+o(1)$, with a high probability. The largest eigenvalue of
  this matrix is $d$. This implies \eqref{a:gap}, since the generator of
  the random walk is given by $\tfrac A d - \text{Id}$.
\end{remark}
\begin{remark}
  \label{r:nonequivalence}
  The assumption \eqref{a:tx1} does not imply \eqref{a:gap}. This can be seen
  easily by considering two copies $G$, $G'$ of a large girth expander with $n$
  vertices, choosing two edges, $e=\{x,y\}$ of $G$ and $e'=\{x',y'\}$ of $G'$,
  erasing $e,e'$ and joining $G$, $G'$ with two new edges $\{x,x'\}$, and
  $\{y,y'\}$. The new graph is $d$-regular. It satisfies \eqref{a:tx1},
  potentially with a slightly different constant than $G$. However, the new
  edges create a bottleneck for the random walk, implying that the spectral gap
  of the new graph decreases to zero with the number of vertices $n$.
\end{remark}

The paper is organised as follows. In Section~\ref{s:notation}, we set up the
notation. In Section~\ref{s:CP}, we prove an
estimate of the form \eqref{eq:approximate}. The piecewise independent measure
$Q$ is constructed in Section~\ref{s:partsbridges}.
Sections~\ref{s:sub}, \ref{s:super} and \ref{s:uniq} contain the proofs of
Theorems~\ref{t:subcritical}, \ref{th:usmall} and \ref{t:secondcomponent},
respectively.

\medskip

\textbf{Acknowledgements.} The authors wish to thank Itai Benjamini
for proposing the study of the vacant set on expanders and are
indebted to Alain-Sol Sznitman for helpful discussions. This work was
started when the third author was at ETH Zurich.

\section{Notation}
\label{s:notation}
In this section we introduce additional notation and recall some known
results about random interlacements.

\subsection{Basic notations.}
Throughout the text $c$ or $c'$ denote strictly positive constants only
depending on $d$, and the parameters $\good$ and $\gap$ in assumptions
\eqref{a:tx1} and \eqref{a:gap}, with value changing from place to place. The
numbered constants $c_0$, $c_1$, \ldots are fixed and refer to their first
appearance in the text. Dependence of constants on additional parameters
appears in the notation. For instance $c_\piece$ denotes a positive constant
depending on $\piece$ and possibly on $d$, $\good$, $\gap$.

We write $\mathbb N = \{0,1,\dots\}$ for the set of natural numbers and for
$a \in \mathbb{R}$ we write $\lfloor a \rfloor$ for the largest integer smaller
or equal to $a$ and define $\lceil a \rceil = \lfloor a \rfloor +1$. In this
paper we use $\ln x$ for the natural logarithm and use $\ld$ to denote the
logarithm with base $d-1$,
\begin{equation}
  \label{e:ld}
  \ld x = \log_{d-1} x = \ln x / \ln (d-1).
\end{equation}
For a set $A$ we denote by $|A|$ its cardinality.

Recall that we have introduced a sequence of finite connected graphs
$G_k =(V_k, {\mathcal E}_k)$ in the introduction.
We will always omit the subscript $k$ of the sequence of graphs $G_k$ and their
sizes $n_k$. In particular, we always assume that $n$ is the number of vertices
of $G$.
For $d$ as in \eqref{a:reg},
we will also consider the infinite $d$-regular tree, denoted
${\mathbb T}_d = ({\mathbb V}_d, {\mathbb E}_d)$.

We now introduce some notation valid for an arbitrary graph $G=(V, \mathcal E)$. We
use $\dist(\cdot,\cdot)$ to denote the usual graph distance and write $x\sim y$,
if $x,y$ are neighbours in $G$. We write $B(x,r)$ for the ball centred at
$x$ with radius $r$, $B(x,r)=\{y\in V:\dist(x,y)\le r\}$.
For $A\subset V$ we define its complement
$A^c=V\setminus A$, its $r$-neighbourhood $B(A,r)=\bigcup_{x\in A}B(x,r)$, and
its interior and exterior boundary
\begin{equation}
  \partial_i A = \{x\in A: \exists y\in A^c, x\sim y\},\qquad
  \partial_e A = \{x\in A^c: \exists y\in A, x\sim y\}.
\end{equation}
We write $\inter(A)$ for $A\setminus \partial_i A$. We
define the tree excess of a connected set $A\subset V$, denoted by $\tx(A)$, as
the number of edges which can be removed from the subgraph of $G$ induced by $A$
while keeping it connected. Equivalently,
\begin{equation}
  \label{e:txcharact}
  \tx(A)= |\mathcal E_A|-|A|+1
\end{equation}
where $\mathcal E_A$ stands for the edges of the subgraph induced by $A$. By a
\textit{cycle} we mean a sequence of vertices $x_1, \dots, x_k$ such that
$x_1 = x_k$ and $x_{i+1}\sim x_i$ for all $1\le i <k$. Note that $\tx(A) = 0$
if and only of there is no cycle in $A$.

\subsection{Random walk on graphs.}
\label{ss:rw}
We use $P_x$ to denote the law of the canonical continuous-time simple random walk
on $G$ started at $x\in V $, that is of the Markov process with generator given by
\begin{equation}
  \label{def:lap} \lap f (x) = \sum_{y \in V} (f(y)-f(x)) p_{xy},
  \qquad \text{for } f:V\to \mathbb R, x \in V,
\end{equation}
where $p_{xy}=1/{d_x}$ if $x\sim y$, and  $p_{xy}=0$ otherwise, and $d_x$
denotes the degree of the vertex $x$. We write $P^G_x$ for $P_x$ whenever
ambiguity would otherwise arise.

With exception of Lemma~\ref{pr:AF} and
Proposition~\ref{pr:EH}, we will always work with regular graphs $G$, in which
case $d_x$ is the same for every vertex $x \in V$. We use $X_t$ to denote the
canonical process and $(\mathcal F_t)_{t \geq 0}$ the canonical filtration. We
write $P^\ell_x$ for the restriction of $P_x$ to $D([0,\ell],V)$ and
$P^\ell_{xy}$ for the law of random walk bridge, that is for $P^\ell_x$
conditioned on $X_\ell=y$. We write $E_x, E^\ell_x, E^\ell_{xy}$ for the
corresponding expectations. The canonical shifts on $D([0,\infty),V)$ are
denoted by $\theta_t$. The time of the $n$-th jump is denoted by $\tau_n$, i.e.
$\tau_0=0$ and for $n \geq 1$,
$\tau_n = \inf\{ t \geq 0: X_t \neq X_0\} \circ \theta_{\tau_{n-1}} + \tau_{n-1}$.
The process counting the number of jumps before time $t$ is denoted by
$N_t=\sup\{k:\tau_k\le t\}$. Note that under $P_x$, $(N_t)_{t \geq 0}$ is a
Poisson process on ${\mathbb R}_+$ with intensity $1$, but this
is not true under $P^\ell_{xy}$.  We write $\hat X_n$ for the discrete skeleton of
the process $X_t$, that is $\hat X_n = X_{\tau_n}$. For $0 \leq s \leq t$, we
use $X_{[s,t]}$ to denote denote the set of vertices visited by the random walk
between times $s$ and $t$, $X_{[s,t]} = \{X_r: s \leq r \leq t\}$.

Given $A\subset V$, we denote with $H_A$ and $\tilde H_A$ the respective
entrance and hitting time of~$A$
\begin{equation}
  \label{e:hittingtimes}
  \begin{split}
    H_A &= \inf\{t \geq 0 : X_t \in A\},\qquad
    \text{and}\qquad
    {\tilde H}_A = H_A\circ \theta_{\tau_1} + \tau_1.
  \end{split}
\end{equation}
We write $\hat H_A$ for the discretised entrance time, $\hat H_A = N_{H_A}$.

For the remaining notation, we assume that $G$ is a finite connected graph. For
such $G$ we denote by $\pi$ the stationary distribution for the simple random
walk on $G$ and use $\pi_x$ for $\pi (x)$. $P$ stands for the law of the simple
random walk started at $\pi $ and $E$ for the corresponding expectation.
Under assumption \eqref{a:reg} the stationary
distribution is the uniform distribution. For all real valued functions $f,g$
on $V$ we define the Dirichlet form
\begin{equation}
  \label{def:dir}
  {\mathcal D}(f,g) =
  \frac{1}{2} \sum_{x,y \in V}
    (f(x)-f(y))(g(x)-g(y)) \pi_x p_{xy}
    = - \sum_{x \in G} \lap f(x) g(x) \pi_x.
\end{equation}
The spectral gap of $G$ is given by
\begin{align}
  \label{def:gap}
  \lambda_G = \min \bigl\{ {\mathcal D}(f,f): \pi(f^2)=1, \pi(f)=0 \bigr\}.
\end{align}
From \cite{SC97}, p.~328, it follows that under assumption \eqref{a:reg},
\begin{align}
  \label{eq:I}
  \sup_{x,y \in V} |P_x[X_t=y] - \pi_y| \leq e^{-\lambda_G t},
  \text{ for all } t \geq 0.
\end{align}
A function $h : V \to {\mathbb R}$ is called harmonic on $A$ if $\lap h(x) = 0$
for all $x \in A$. For two non-empty disjoint subsets $A,C$ of $V$ we define
the equilibrium potential $g^\star_{A,C}$ as the unique function harmonic on
$(A\cup C)^c$, satisfying $g^\star|_A=1$, $g^\star|_C=0$. It is well known that
\begin{gather}
  \label{e:potential}
  g^\star_{A,C}(x)=P_x[H_A \le H_C],\\
  \label{eq:Dgg}
  \mathcal{D}(g^\star_{A,C},g^\star_{A,C})
  = \sum_{z \in A} P_z[\tilde H_A > H_C] \pi_z.
\end{gather}

We define the isoperimetric constant of  $G$ as
$\iota_G = \min \{ |\partial_e A|/|A|: A \subset V,  |A| \leq |V|/2 \}$. If
assumption \eqref{a:reg} holds, then Cheeger's inequality (\cite[Lemma
    3.3.7]{SC97})  yields $ c \iota_G^2 \leq \lambda_G \leq c' \iota_G$. The
assumption \eqref{a:gap} then implies the existence of $\gap'>0$ such that
\begin{equation}
  \label{eq:cheeger}|
  \partial_e A| \ge \gap' |A|,\qquad \text{for all $k\ge 1$ and
    $A\subset V$ with $|A|\le |V|/2$}.
\end{equation}

\subsection{Random interlacement}
Let us give a brief introduction to random interlacements. Although we
will not directly use any results on random interlacements in this paper, random
interlacements give a natural interpretation to the key result in
Section~6. Consider an infinite locally finite graph
$\mathbb{G} = (\mathbb{V}, \mathbb{E})$ for which the simple random walk (with
  law denoted by $P_x^\mathbb{G}$) is transient. According to \cite{Szn09,
  Tei09}, the interlacement set on $\mathbb{G}$ is given by the trace left by a
Poisson point process of doubly infinite trajectories modulo time-shift in $\mathbb{G}$ which
visit every point only finitely many times. The complement of the interlacement
set is called vacant set. Although the precise construction of the random
interlacements on a graph is delicate, we give here a characterization of the
law ${\mathbb Q}_u$ that the vacant set induces on $\{0,1\}^\mathbb{V}$. For
this, consider a finite set $K \subset \mathbb{V}$ and define  the capacity
of $K$ as
\begin{equation}
  \text{cap}(K) = \sum_{x\in K}P^\mathbb{G}_x[\tilde H_K = \infty],
\end{equation}
with $\tilde H_K$ as in \eqref{e:hittingtimes}. The law $\mathbb Q_u$ of the indicator
function of the vacant set at level $u$ is the only measure on
$\{0,1\}^\mathbb{V}$ such that
\begin{equation}
  \label{e:Qchar}
  \mathbb Q_u[W_y = 1, \text{ for all $y \in K$}] = \exp\{-u \cdot \text{cap}(K)\},
\end{equation}
where $\{W_y\}_{y \in {\mathbb V}}$ are the canonical projections from
$\{0,1\}^\mathbb{V}$ to $\{0,1\}$, see (1.1) in \cite{Tei09}.


\section{Conditional probability estimate} \label{s:CP}

In this section, we derive in Proposition~\ref{p:c} an estimate on the
probability that in a finite time interval $[0,T]$, a random walk does not
visit a vertex $y$ in the boundary of a set $A$, given that it does not visit
the set $A$. This estimate will be crucial in the analysis of the breadth-first
search algorithm exploring the components of the vacant set, used in the proof
of Theorem~\ref{t:subcritical}.

We recall first  a  variational formula for the
expected entrance time.
\begin{lemma}
  \cite[Chapter 3, Proposition 41]{AF}
  \label{pr:AF}
  For a non-empty subset $A \subseteq V$,
  \begin{equation}
    \label{pr1} (E H_A)^{-1} =
    \inf \left\{ {\mathcal D}(f,f): \, f : V \to {\mathbb R},
      \, f=1 \text{ on }  A, \, \pi(f) = 0 \right\}.
  \end{equation}
  The minimizing function $f^\star$ in \eqref{pr1} is given by
  \begin{equation}
    \label{pr2} f^\star(x) = 1 - \frac{E_x H_A}{E H_A}.
  \end{equation}
\end{lemma}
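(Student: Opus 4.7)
The plan is to follow the standard potential-theoretic proof: produce an explicit admissible function achieving the claimed value $1/EH_A$, then show it is the unique minimizer by an orthogonality argument.

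First I would set $h(x) = E_x H_A$ and record its two defining properties. By first-step analysis for the continuous-time walk with mean-one holding times, $h \equiv 0$ on $A$ and $\lap h(x) = -1$ for $x \in A^c$; taking $\pi$-averages and using reversibility gives $\pi(h) = EH_A$. Then $f^\star = 1 - h/EH_A$ satisfies $f^\star|_A = 1$ and $\pi(f^\star) = 0$, so it is admissible in \eqref{pr1}. To evaluate $\mathcal{D}(f^\star,f^\star)$, note that $f^\star$ and $-h/EH_A$ differ by a constant, so
\begin{equation*}
  \mathcal{D}(f^\star,f^\star) = \frac{1}{(EH_A)^2}\mathcal{D}(h,h)
  = -\frac{1}{(EH_A)^2}\sum_{x\in V}\lap h(x)\, h(x)\, \pi_x
  = \frac{1}{(EH_A)^2}\sum_{x\in A^c} h(x)\pi_x
  = \frac{1}{EH_A},
\end{equation*}
using $h|_A = 0$ in the next-to-last step and $\pi(h) = EH_A$ in the last. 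This shows that the infimum is at most $1/EH_A$.

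For the matching lower bound I would argue by orthogonality. Let $f$ be any admissible test function and set $g = f - f^\star$; then $g|_A = 0$ and $\pi(g) = 0$. Using the representation $\mathcal{D}(f^\star,g) = -\sum_x \lap f^\star(x)\, g(x)\, \pi_x$, the contribution from $x \in A$ vanishes because $g|_A = 0$, while on $A^c$ one has $\lap f^\star(x) = -\lap h(x)/EH_A = 1/EH_A$, a constant. Hence
\begin{equation*}
  \mathcal{D}(f^\star,g) = -\frac{1}{EH_A}\sum_{x\in A^c} g(x)\pi_x
  = -\frac{1}{EH_A}\Bigl(\pi(g) - \sum_{x\in A} g(x)\pi_x\Bigr) = 0.
\end{equation*}
Expanding $\mathcal{D}(f,f) = \mathcal{D}(f^\star,f^\star) + 2\mathcal{D}(f^\star,g) + \mathcal{D}(g,g) \geq \mathcal{D}(f^\star,f^\star) = 1/EH_A$ gives the reverse inequality, with equality forcing $\mathcal{D}(g,g) = 0$, and hence $g \equiv 0$ on the connected graph $G$ by the constraint $\pi(g)=0$. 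This identifies $f^\star$ as the unique minimizer.

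The only mildly delicate step is the orthogonality computation, where one must carefully split the sum $\sum_x \lap f^\star(x)\, g(x)\, \pi_x$ according to $A$ and $A^c$ and use both constraints on $g$ (vanishing on $A$ and mean zero under $\pi$) to cancel the boundary contribution; everything else is bookkeeping around the identities $\lap h = -1$ on $A^c$ and $\pi(h) = EH_A$.
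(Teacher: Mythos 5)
Your proof is correct and complete; the paper itself does not prove this lemma but simply cites \cite[Chapter 3, Proposition 41]{AF}, and your argument (explicit minimizer $f^\star = 1 - h/EH_A$ with $\lap h = -1$ off $A$, plus the orthogonality $\mathcal{D}(f^\star, g)=0$ for perturbations $g$ vanishing on $A$ with $\pi(g)=0$) is exactly the standard potential-theoretic proof found there. The only cosmetic quibble is that $\pi(h)=EH_A$ follows directly from the definition of $E$ as the walk started from $\pi$, with no appeal to reversibility needed (reversibility enters only in the identity $\mathcal{D}(f,g)=-\sum_x \lap f(x)\,g(x)\,\pi_x$, which the paper records in \eqref{def:dir}).
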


Using this variational formula, we obtain the following estimate.
\begin{proposition}
  \label{pr:EH}
  Let $A$ and $C$ be disjoint non-empty subsets of $V$ and let
  $g^\star=g^\star_{A,C}$ be the equilibrium potential \eqref{e:potential} and
  $f^\star$ the minimising function \eqref{pr2}. Then
  \begin{equation}
    \label{eq:EH}
    {\mathcal D}(g^\star,g^\star)
    \Bigl( 1 - 2 \sup_{x \in C} |f^\star(x)| \Bigr) \leq
    \frac{1}{E[H_A]} \leq {\mathcal D}(g^\star,g^\star) \pi(C)^{-2}.
  \end{equation}
\end{proposition}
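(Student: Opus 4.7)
My plan is to prove the two bounds separately, using the variational formula of Lemma~\ref{pr:AF} for the upper bound, and a combination of integration by parts and Cauchy--Schwarz for the lower bound.

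For the upper bound, the idea is to build a valid test function for \eqref{pr1} out of $g^\star$. Note that $g^\star$ equals $1$ on $A$ but need not have zero mean under $\pi$. So I would set
\[
f(x) = \frac{g^\star(x)-\pi(g^\star)}{1-\pi(g^\star)},
\]
which satisfies $f|_A=1$ and $\pi(f)=0$. Lemma~\ref{pr:AF} then gives
$1/E[H_A]\le \mathcal{D}(f,f)= \mathcal{D}(g^\star,g^\star)/(1-\pi(g^\star))^2$, and since $g^\star|_C=0$ implies $1-\pi(g^\star)\ge \pi(C)$, the claimed upper bound follows.

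For the lower bound, the strategy is to compare $\mathcal{D}(g^\star,g^\star)$ and $\mathcal{D}(f^\star,f^\star)=1/E[H_A]$ by estimating the cross-term $\mathcal{D}(g^\star,f^\star)$. Using reversibility, write $\mathcal{D}(g^\star,f^\star)=-\sum_{x\in V}\Delta g^\star(x)f^\star(x)\pi_x$. Since $g^\star$ is harmonic off $A\cup C$, only vertices in $A\cup C$ contribute. On $A$ we have $f^\star\equiv 1$, and the identity $\sum_{x\in V}\Delta g^\star(x)\pi_x=0$ together with \eqref{eq:Dgg} (or a direct calculation) yields
\[
-\sum_{x\in A}\Delta g^\star(x)\pi_x = \sum_{x\in C}\Delta g^\star(x)\pi_x = \mathcal{D}(g^\star,g^\star),
\]
and moreover $\Delta g^\star(x)\ge 0$ for $x\in C$ (since $g^\star|_C=0$ and $g^\star\ge 0$). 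Therefore
\[
\mathcal{D}(g^\star,f^\star) \ge \mathcal{D}(g^\star,g^\star) - \sup_{x\in C}|f^\star(x)|\cdot \mathcal{D}(g^\star,g^\star)
= \mathcal{D}(g^\star,g^\star)\Bigl(1-\sup_{x\in C}|f^\star(x)|\Bigr).
\]
Applying Cauchy--Schwarz $\mathcal{D}(g^\star,f^\star)^2\le \mathcal{D}(g^\star,g^\star)\mathcal{D}(f^\star,f^\star)$ and dividing by $\mathcal{D}(g^\star,g^\star)$ gives
\[
\mathcal{D}(g^\star,g^\star)\Bigl(1-\sup_{x\in C}|f^\star(x)|\Bigr)^{2} \le \mathcal{D}(f^\star,f^\star)=1/E[H_A],
\]
and the claim follows from the elementary inequality $(1-a)^2\ge 1-2a$.

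The main delicate point is the integration-by-parts step: one must carefully verify that the boundary contributions collapse correctly (using harmonicity of $g^\star$ off $A\cup C$, the boundary values of $f^\star$ on $A$, and the compensation identity $\sum_x\Delta g^\star\pi_x=0$), and that $\Delta g^\star$ is signed on $C$ so that $|f^\star|$ can be pulled out without spoiling the inequality. Everything else is routine manipulation of the variational formulas.
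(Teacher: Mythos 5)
Your upper bound argument is identical to the paper's. For the lower bound you take a genuinely different route. The paper starts from $1/E[H_A]=\mathcal{D}(f^\star,f^\star)\geq \inf\{\mathcal{D}(g,g): g|_A=1,\ g|_C=f^\star|_C\}$, identifies the minimiser as the harmonic interpolant $\hat g=g^\star+\psi$ with $\psi(x)=E_x[f^\star(X_{H_C})\mathbf{1}_{\{H_C<H_A\}}]$, expands $\mathcal{D}(\hat g,\hat g)=\mathcal{D}(g^\star,g^\star)+2\mathcal{D}(g^\star,\psi)+\mathcal{D}(\psi,\psi)$, drops the nonnegative term $\mathcal{D}(\psi,\psi)$, and then bounds the cross-term $\mathcal{D}(g^\star,\psi)$ by $-|\psi|_\infty\,\mathcal{D}(g^\star,g^\star)$ using exactly the sign information on $\lap g^\star|_C$ that you use. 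You instead compute $\mathcal{D}(g^\star,f^\star)$ directly by integration by parts, isolate the $A$- and $C$-contributions, and close with Cauchy--Schwarz for the Dirichlet form followed by $(1-a)^2\geq 1-2a$. Your version avoids introducing $\psi$ and the harmonic extension altogether, which is structurally cleaner; the paper's version keeps the probabilistic interpretation of $\psi$ visible. Both are correct and give the same constant. One small point you should make explicit: to pass from $\mathcal{D}(g^\star,f^\star)\geq \mathcal{D}(g^\star,g^\star)(1-\sup_{x\in C}|f^\star(x)|)$ to the squared inequality via Cauchy--Schwarz, you need the right-hand side to be nonnegative, i.e.\ $\sup_{x\in C}|f^\star(x)|\leq 1$; when this fails, $1-2\sup_{x\in C}|f^\star(x)|<0\leq 1/E[H_A]$ so the proposition is vacuous anyway, so this costs nothing but should be said. (You should also note that $\mathcal{D}(g^\star,g^\star)>0$, which holds since $g^\star$ is $1$ on $A$ and $0$ on $C$ with both non-empty, so dividing is legitimate.)
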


\begin{proof}
  We prove the right-hand inequality in \eqref{eq:EH} first. To this end,
  we modify the function $g^\star$ such that it becomes admissible for the
  variational problem \eqref{pr1} of Lemma~\ref{pr:AF}.  We define the function
  $\tilde g$ on $V$ by
  ${\tilde g} = ({g^\star - \pi(g^\star)})/({1- \pi(g^\star)})$. Then $\tilde g$
  equals $1$ on $A$ and $\pi({\tilde g})=0$, so we obtain from \eqref{pr1} that
  \begin{align}
    E[H_A]^{-1} \leq {\mathcal D}({\tilde g},{\tilde g})
    = {{\mathcal D}(g^\star,g^\star)}{(1-\pi(g^\star))^{-2}}.
  \end{align}
  Since $g^\star$ is non-negative, bounded by $1$ and non-zero only on $C^c$,
  we have $\pi(g^\star) \leq \pi(C^c)$ and the right-hand inequality of
  \eqref{eq:EH} follows.

  To prove the left-hand inequality in \eqref{eq:EH}, observe that the
  maximizer $f^\star$  of the variational problem \eqref{pr1} satisfies
  $f^\star = 1$ on $A$. Therefore
  \begin{equation}
    \label{EH1}
    {E[H_A]}^{-1} = {\mathcal D}(f^\star,f^\star)
    \geq \inf \bigl\{ {\mathcal D}(g,g):
      g: V \to {\mathbb R}, g = 1 \text{ on } A, \, g
      = f^\star \text{ on } C \bigr\}.
  \end{equation}
  Since $G$ is finite, the infimum is attained by a function $\hat g$ which
  satisfies the given boundary conditions on $A$ and $C$ and which is harmonic
  in $(A \cup C)^c$. In particular, the process
  $({\hat g}(X_{t \wedge H_{A \cup C}}))_{t \geq 0}$ is a $P_x$-martingale for
  any $x \in V$. From the optional stopping theorem, it follows that
  ${\hat g}(x) = g^\star(x) + \psi(x)$,  $x \in V$, where the function $\psi$
  is defined by
  \begin{equation}
    \label{EH1.2}
    \psi(x) = E_x [f^\star(X_{H_C}) \mathbf{1}_{\{H_C < H_A\}}],
    \textrm{ for } x \in V.
  \end{equation}
  Therefore,
  \begin{align}
    \label{EH2} E[H_A]^{-1} &\geq {\mathcal D}(g^\star+\psi, g^\star+\psi)
    \geq {\mathcal D}(g^\star, g^\star) + 2 {\mathcal D}(g^\star, \psi).
  \end{align}
  Since $\psi$ equals $0$ on $A$, $\lap g^\star$ equals $0$ on $(A \cup C)^c$
  and on $\inter C$, and $\lap g^\star (x) \geq 0$ for all $x \in \partial_i C$
  (indeed, $g^\star$ is non-negative on $V$ and equal to $0$ on $\partial_i C$),
  we have
  \begin{equation}
    {\mathcal D}(g^\star, \psi)
    = - \sum_{x \in \partial_i C} \lap g^\star (x) \psi(x) \pi_x
    \geq - |\psi|_\infty \sum_{x \in \partial_i C} \lap g^\star (x) \pi_x.
  \end{equation}
  Using again $g^\star$ equals $0$ on $\partial_i C$ and observing that
  $\lap f =  -\lap (1-f)$ for any real-valued function $f$ on $V$, as can be
  directly seen from the definition of $\lap$ in \eqref{def:lap}, we obtain
  \begin{align}
    \label{EH4} {\mathcal D}(g^\star, \psi) &\geq  |\psi|_\infty \sum_{x
      \in \partial_i C} \lap (1-g^\star) (x) (1-g^\star(x)) \pi_x.
  \end{align}
  Since $1-g^\star$ vanishes on $A$, while $\lap (1-g^\star) = \lap g^\star$
  vanishes on $(A \cup C)^c$ and on $\inter C$, the right-hand side equals
  $- |\psi|_\infty {\mathcal D}(1-g^\star,1-g^\star) = - |\psi|_\infty {\mathcal D}(g^\star,g^\star)$.
  Putting together \eqref{EH2} and \eqref{EH4} and using \eqref{EH1.2}, we
  therefore obtain
  \begin{align*}
    {E[H_A]}^{-1} \geq {\mathcal D}(g^\star, g^\star)  (1- 2|\psi|_\infty)
    \geq {\mathcal D}(g^\star, g^\star)  (1- 2 \sup_{x \in C} |f^\star(x)|).
  \end{align*}
  This yields the left-hand estimate in \eqref{eq:EH} and completes the proof
  of Proposition~\ref{pr:EH}.
\end{proof}

In order to apply the left-hand estimate of \eqref{eq:EH}, a bound on
$\sup_{x \in C}|f^\star(x)|$ is required. We will derive such a bound in
Proposition~\ref{pr:Eratio} below. In its proof we will need the following
technical lemma.

\begin{lemma} \label{lm:hit}
  Assume \eqref{a:reg} and consider $r,s \in {\mathbb N}$ and $x \in V$, such
  that $\tx(B(x,r+s)) \leq 1$. Then for any $y \in \partial_i B(x, r+s)$,
  \begin{align}
    P_y [H_{B(x,r)} < H_{B(x,r+s)^c} ] \leq c (d-1)^{-  s  }. \label{eq:hit}
  \end{align}
\end{lemma}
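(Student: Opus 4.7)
The plan is to exploit the explicit harmonic function $f(z) = (d-1)^{-\dist(x,z)}$ on $V$ together with the optional stopping theorem at $\tau = H_{B(x,r)} \wedge H_{B(x,r+s)^c}$. A direct computation using only $d$-regularity shows that, for any $z$ with $\dist(x,z) = k \geq 1$ having the tree-like neighbor structure (exactly one neighbor at distance $k-1$ and $d-1$ neighbors at distance $k+1$ from $x$), one has $\lap f(z) = 0$: indeed, $(1/d)[(d-1)^{-(k-1)} + (d-1)(d-1)^{-(k+1)}] = (d-1)^{-k}$.

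Consider first the clean case $\tx(B(x,r+s)) = 0$, so the ball is a tree and every $z$ with $1 \leq \dist(x,z) \leq r+s$ has the tree-like structure. Then $f$ is harmonic on the annulus $\{z : r+1 \leq \dist(x,z) \leq r+s\}$. Since each jump of the walk changes the distance to $x$ by exactly $\pm 1$ in the tree, the walk exits the annulus at a vertex at distance exactly $r$ or exactly $r+s+1$. Applying optional stopping to the bounded martingale $M_t = f(X_{t \wedge \tau})$ gives
\[
(d-1)^{-(r+s)} = f(y) = E_y[f(X_\tau)] = p\cdot (d-1)^{-r} + (1-p)\cdot(d-1)^{-(r+s+1)},
\]
where $p = P_y[H_{B(x,r)} < H_{B(x,r+s)^c}]$ is the probability we wish to bound. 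Solving for $p$ yields $p = (d-2)/((d-1)^{s+1}-1) \leq c(d-1)^{-s}$, as desired.

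The main obstacle is the case $\tx(B(x,r+s)) = 1$, where the ball contains exactly one cycle. At the (at most two) vertices whose neighbor distribution deviates from the tree-like one, a direct computation shows $\lap f > 0$, so $M_t$ becomes only a submartingale and naive optional stopping yields the wrong direction. I would handle this either by (i) modifying $f$ in a neighborhood of the cycle so as to restore superharmonicity at the cost of a bounded multiplicative constant on the boundary values, or (ii) retaining $f$ but controlling the Dynkin compensator $E_y\bigl[\int_0^\tau \lap f(X_s)\,\mathrm{d}s\bigr]$ by bounding the expected occupation of the exceptional cycle vertices before $\tau$. In either case the essential point is that a single extra edge in $B(x,r+s)$ can only inflate the hitting probability by a bounded factor independent of $r$, $s$ and the cycle's location, so the exponential decay $c(d-1)^{-s}$ from the tree computation is preserved.
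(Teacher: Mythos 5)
Your tree case is correct, and it is essentially the paper's own argument in different clothing: the paper reduces to the biased one-dimensional walk and quotes the gambler's-ruin formula \eqref{eq:gamble}, of which your harmonic function $(d-1)^{-\dist(x,\cdot)}$ is the explicit solution; your value $p=(d-2)/((d-1)^{s+1}-1)$ is exactly what that formula yields. The genuine gap is in the case $\tx(B(x,r+s))=1$, which is where the real content of the lemma lies, and there your argument stops at a declaration of intent. Two things are missing. First, the structural input you use in passing -- that only ``(at most two)'' vertices deviate from the tree-like neighbour pattern -- is not free: the paper proves (claim \eqref{eq:driftout}) that the exceptional vertices number at most two, that they all lie at one common distance $\rho$ from $x$, and that each has at most two neighbours at distance $\leq$ its own; all three features are needed, because the whole point is that the radial drift is weakened at a \emph{single} level only. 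Second, neither of your two repair strategies is carried out, and your closing sentence (``a single extra edge can only inflate the hitting probability by a bounded factor independent of $r$, $s$ and the cycle's location'') is precisely the statement to be proved, not an argument for it. Strategy (ii) in particular is more delicate than you suggest: at an exceptional vertex $\lap f$ is of order $(d-1)^{-\rho}$, so if you only bound the expected occupation of the cycle vertices by a constant, the Dynkin identity gives $p\leq (d-1)^{-s}+c(d-1)^{r-\rho}$, which is useless when the cycle sits near the inner ball. To salvage (ii) you must first show that, started from $y$, the walk even reaches level $\rho$ before leaving $B(x,r+s)$ only with probability of order $(d-1)^{-(r+s-\rho)}$ -- a hitting estimate of the same type, applied in the outer annulus, where by the structural claim the graph is tree-like -- and only then does the compensator become $O((d-1)^{-(r+s)})$ and the bound $p\leq c(d-1)^{-s}$ follow.

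For comparison, the paper sidesteps harmonic functions entirely in the cycle case: having established \eqref{eq:driftout}, it dominates the process $\dist(B(x,r),X_t)$ from below by a birth-death chain on $\{0,\dots,s+1\}$ whose outward probability is $(d-1)/d$ at every level except the single level $\rho-r$, where it is $(d-2)/d$, and the product formula for birth-death ruin probabilities then gives $c(d-1)^{-s}$ directly, the extra edge costing only a constant factor depending on $d$. Your option (i) -- modifying $f$ by a multiplicative kink at level $\rho$ -- is this same argument in potential-theoretic form and would go through once \eqref{eq:driftout} is proved; as written, however, the proposal leaves the hard half of the lemma unproved.
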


\begin{proof}
  We write $B = B(x,r)$ and $B' = B(x, r+s)$ and for every vertex  $z \in B'$
  we define $r_z = \dist(x,z)$. If $\tx(B') = 0$, then $B'$ is a tree and
  $r_{X_t}$ behaves like a random walk on $\mathbb{N}$ with drift, which steps
  left with probability $p=1/d$ and right otherwise.

  It is a known fact that the probability that a random
  walk on $\mathbb Z$ jumping with probability $p$ to the right and $1-p$ to
  the left started at $x>0$ hits $R\ge x$ before hitting zero equals (see
    e.g. \cite{D05}, Chapter 4, Example 7.1)
  \begin{equation}
    \label{eq:gamble}
    \frac{q^x - 1}{q^R - 1},\qquad\text{where }q=(1-p)/p.
  \end{equation}
  The inequality \eqref{eq:hit} then follows directly from
  \eqref{eq:gamble}.

  We thus assume that $\tx(B')=1$.  Let us call a vertex $z$ in
  $B'\setminus \{x\}$
  exceptional, if $z$ does not have $d-1$ neighbours $z'$ with $r_{z'}>r_z$. We
  claim that
  \begin{equation}
    \label{eq:driftout}
    \parbox[c]{0.8\textwidth}
      {There are at most two exceptional vertices. All of them
        are at the same distance (say
          $\rho $) of $x$
      and have at most two neighbours $z'$ with $r_{z'} \leq r_z$}.
  \end{equation}
  To see this, consider an exceptional vertex $z \in B'$. By definition,
  there is a pair $z_1, z_2$ of neighbours of $z$ with
  $r_{z_1}, r_{z_2} \leq r_z$. By considering geodesic paths from $z_1$
  and $z_2$ to $x$, one can extract a cycle in $B(x,r_z)$ containing $z$
  and exactly two of its neighbours $z_1$, $z_2$. By construction, this
  cycle has at most two vertices which maximize the distance to $x$. One of
  them is $z$. Second might be $z_1$ or $z_2$, in which case this vertex
  has the same distance to $x$ as $z$ and is also exceptional. To show
  that there cannot be another exceptional vertex other than $z$ (and
    potentially one of $z_1$, $z_2$), we suppose that there is one, we
  call it $z'$. By the same reasoning we can extract a cycle in $B'$
  containing $z'$ with $z'$ maximizing the distance to $x$. This cycle
  thus must be different from the one containing $z$. This is impossible
  since $\tx(B')=1$. Similarly, if $z$ has three or more neighbours $z_i$
  with $r_{z_i}\le r_z$, then every pair of them can be used to extract
  a cycle, all of them being different. This is again in contradiction
  with $\tx(B')=1$. With this we conclude \eqref{eq:driftout}.

  Let  $Y_t = \dist (B, X_{t \wedge H_{B \cup  (B')^c}})$. We compare $Y$ with
  a continuous-time birth-death process $U_t$ on $\{0,\dots,s+1\}$ given by the
  following transition rates
  \begin{equation}
    p_{i,i+1}=1-p_{i,i-1}=
    \begin{cases}
      (d-2)/d, &\text{if $i = \rho -r$},\\
      (d-1)/d, &\text{if $i\in \{1,\dots,s\}\setminus\{\rho -r\}$},
    \end{cases}
  \end{equation}
  and such that the states $0$ and $s+1$ are absorbing. More precisely, using
  \eqref{eq:driftout}, we can couple $Y$ (under law $P_y$) with $U$ (started
    from $s$) in such way that $U_t \le Y_k$ for every $t\ge 0$. This implies
  that
  \begin{equation}
    \label{e:domin}
    \begin{array}{c}
      P_y[H_{B}<H_{(B')^c}] \text{ is smaller or equal to the probability} \\
      \text{that $U$ hits $0$ before $s+1$, given that $U_0=s$}.
    \end{array}
  \end{equation}
  The last probability will now be estimated using a standard birth-death
  process computation. Let $f(i)$ be the probability that $U$ started at $i$
  hits $0$ and set $h(i)=f(i-1)-f(i)$, $i\in\{1,\dots s+1\}$. Clearly $f(0)=1$,
  $f(s+1)=0$ and the strong Markov property on the time of the first jump
  implies that $h(i)p_{i,i-1}=h(i+1)p_{i,i+1}$, $i\in \{1,\dots,s\}$. Fixing
  $h(s+1)=\gamma $ we use the above facts to get
  \begin{equation}
    \label{e:bd}
    1=f(0)\ge h(1)=
    \gamma\cdot \frac{p_{1,2}\dots p_{s,s+1}}{p_{1,0} \dots
      p_{s,s-1}}
    = \frac \gamma2 (d-1)^{s-1}(d-2)2.
  \end{equation}
  Moreover, conditioned on $U_0 = s$, the probability that
  $U$ hits zero before $s+1$ is $f(s) = \gamma$,
  i.e. $\text{Prob}[\text{$U$ hits $0$ before $s+1 | U_0=s$}]=f(s)=\gamma $.
  Putting this together with \eqref{e:domin} and \eqref{e:bd} the proof of
  Lemma~\ref{lm:hit} is finished.
\end{proof}

We now apply the last lemma to estimate the probability that the random walk,
started outside of the larger one of two concentric balls visits the small ball
before time $T > 0$.

\begin{lemma}
  \label{lm:mix}
  Assume that $G$ satisfies \eqref{a:reg} and consider $T>0$, $r,s\in \mathbb N$
  and $x \in V$ such that $\tx(B(x,r+ s)) \leq 1$. Then, for some $c,c'>0$,
  \begin{align}
    \label{eq:hmix}
    P_y [H_{B(x,r)} < T]
    \leq c T(d-1)^{-s} + e^{-c'T}\quad \text{for all $y \in B(x,r+s)^c$.}
  \end{align}
\end{lemma}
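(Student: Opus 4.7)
Write $B = B(x,r)$ and $B' = B(x,r+s)$. The plan is to decompose the trajectory of $X$ into excursions that enter $B'$ and subsequently leave it, bound the probability of hitting $B$ during each excursion by Lemma~\ref{lm:hit}, and then control the number of excursions by the Poisson jump count.

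Since $y \in (B')^c$ and $B \subset B'$, the event $\{H_B < T\}$ forces the walk to enter $B'$ through $\partial_i B'$ before time $T$. Define recursively $T_0 = H_{B'}$ and, for $i \geq 0$, $S_i = T_i + H_{(B')^c}\circ\theta_{T_i}$ and $T_{i+1} = S_i + H_{B'}\circ\theta_{S_i}$. Then $X_{T_i} \in \partial_i B'$ on $\{T_i < \infty\}$, and by the strong Markov property applied at $T_i$ together with Lemma~\ref{lm:hit} (whose hypothesis $\tx(B') \leq 1$ is assumed),
\begin{equation*}
P_y\bigl[H_B \in [T_i, S_i] \,\big|\, \mathcal F_{T_i}\bigr] \leq c(d-1)^{-s}
\quad \text{on } \{T_i < \infty\}.
\end{equation*}

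Let $M = \#\{i \geq 0 : T_i \leq T\}$. Each excursion between consecutive entries $T_{i}$ and $T_{i+1}$ contains at least two jumps (one to exit $B'$ at time $S_i$ and one to re-enter at time $T_{i+1}$), so $M \leq N_T$. Splitting on the event $\{N_T \leq 2T\}$ and union-bounding over the at most $2T$ excursions using the display above,
\begin{equation*}
P_y[H_B < T]
\leq \sum_{i=0}^{\lfloor 2T \rfloor} P_y\bigl[H_B \in [T_i, S_i],\, T_i \leq T\bigr] + P_y[N_T > 2T]
\leq c T (d-1)^{-s} + P[N_T > 2T].
\end{equation*}
Finally, since under $P_y$ the process $N_T$ is Poisson of rate $1$, the standard Chernoff estimate gives $P[N_T > 2T] \leq e^{-c'T}$ for some $c' > 0$, which yields \eqref{eq:hmix}.

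There is no substantial obstacle: the only point requiring care is that at each time $T_i$ the walk is genuinely on $\partial_i B'$, so that Lemma~\ref{lm:hit} can be invoked, and that the excursions really cost at least two jumps apiece in the continuous-time skeleton, so that $M \leq N_T$. Everything else is routine strong Markov and a one-line Poisson large-deviation estimate.
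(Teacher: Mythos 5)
Your proof is correct and follows essentially the same route as the paper: reduce the hitting problem to a union bound over excursions into $B'$, bound each excursion's success probability via Lemma~\ref{lm:hit}, and cap the number of excursions before time $T$ by the Poisson jump count with a Chernoff tail bound. The only cosmetic difference is that the paper first passes to the discrete-time skeleton $\hat H_B$ and bounds $P_y[\hat H_B \leq 2T]$, whereas you keep the excursion decomposition in continuous time and invoke $M \leq N_T$; these are interchangeable bookkeeping choices for the same argument.
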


\begin{proof}
  As in the previous proof we write $B = B(x, r)$, $B'=B(x,r+s)$. From an
  exponential upper bound on the probability that a Poisson random variable
  with expectation $T$ is larger than $2T$, we have
  \begin{equation}
    P_y [H_B < T] \le P_y[\hat H_B\le 2T]+e^{-c' T},
  \end{equation}
  where $\hat H_B$ is the entrance time for the discrete-time walk defined
  below~\eqref{e:hittingtimes}.

  On the way from $y$ to $x$ (as in the lemma), the simple random walk must
  visit some vertex $z \in \partial_i B'$. After reaching such vertex, it
  either hits $B$ without exiting $B'$ or it exits $B'$. The probability of the
  first event is bounded from above by $c(d-1)^{-s}$, see Lemma~\ref{lm:hit}.
  When the second event occurs, the simple random walk must again pass through
  $\partial_i B'$ in order to visit $x$. At this point we can repeat the
  previous reasoning. However, before time $2T$ we can repeat this procedure at
  most $2T$ times, since we are considering a discrete-time walk. A union bound
  then implies
  \begin{equation}
    P_y[\hat H_B\le 2T]\le 2 T c (d-1)^{-s}
  \end{equation}
  and Lemma~\ref{lm:mix} follows by renaming constants.
\end{proof}

Finally, we prove the proposition that will allow us to use the left-hand side
of the estimate \eqref{eq:EH} on $E[H_A]^{-1}$, derived in the beginning of
this section.

\begin{proposition} \label{pr:Eratio}
  Let $G=(V,\mathcal E)$ be a graph on $n$ vertices satisfying
  \eqref{a:reg}, \eqref{a:gap} and let $A \subseteq V$,
  $s \in (0,\log_{d-1} n] \cap {\mathbb N}$ such that
  $|A| \leq n/2$ and  $\tx(B(x, s)) \leq 1$ for every $x\in A$.
  Then
  \begin{equation}
    \label{eq:Eratio}
    \sup_{y \in V: \dist(y,A) > s} \Bigl| \frac{E_y[H_A]}{E[H_A]} - 1 \Bigr|
    \leq c |A| (d-1)^{-s} \log^4 n.
  \end{equation}
\end{proposition}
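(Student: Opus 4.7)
The identity $f^\star(y) = 1 - E_y[H_A]/E[H_A]$ from Lemma~\ref{pr:AF} reduces the claim to showing $|E_y[H_A] - E[H_A]| \leq c|A|(d-1)^{-s}\log^4 n \cdot E[H_A]$ whenever $\dist(y,A) > s$. The plan is to compare $E_y[H_A]$ with $E[H_A]$ by conditioning on the walker's position at a cutoff time $T_0 = C\log n$ with $C$ large --- past the mixing time of $G$, yet so short that the walk has negligible chance of having entered $A$ by $T_0$.

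First I would use Lemma~\ref{lm:mix} with $r=0$: the hypothesis $\tx(B(x,s)) \leq 1$ for each $x\in A$ applies with $B(x,0)=\{x\}$, giving
\begin{equation*}
  P_y[H_{\{x\}} < T_0] \leq c T_0 (d-1)^{-s} + e^{-c'T_0} \quad \text{for every } y \in B(x,s)^c.
\end{equation*}
A union bound over $x\in A$ yields $P_y[H_A < T_0] \leq c|A|T_0(d-1)^{-s} + |A|e^{-c'T_0}$. Taking $C$ large enough (depending on $c'$), the exponential summand is dominated by the first one, since $s \leq \log_{d-1} n$ forces $(d-1)^{-s} \geq 1/n$; hence
\begin{equation*}
  P_y[H_A < T_0] \leq c|A|(d-1)^{-s}\log n.
\end{equation*}

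Next, the strong Markov property at $T_0$ on $\{H_A > T_0\}$ gives
\begin{equation*}
  E_y[H_A] = E_y[H_A \wedge T_0] + \sum_{z \in V} P_y[X_{T_0}=z]\,E_z[H_A] - E_y\bigl[E_{X_{T_0}}[H_A]\,\bbone_{\{H_A\leq T_0\}}\bigr].
\end{equation*}
Subtracting the identity $E[H_A] = \sum_z \pi_z E_z[H_A]$ and using $|P_y[X_{T_0}=z]-\pi_z| \leq e^{-\lambda_G T_0}$ from \eqref{eq:I} together with the previous step, I obtain
\begin{equation*}
  |E_y[H_A] - E[H_A]| \leq T_0 + \max_{z\in V} E_z[H_A]\cdot\bigl(n e^{-\lambda_G T_0} + P_y[H_A\leq T_0]\bigr).
\end{equation*}

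Finally, invoking standard spectral-gap hitting-time estimates in the spirit of \cite{AF}, one has $\max_z E_z[H_A] \leq c n/|A|$ and the matching lower bound $E[H_A] \geq c' n/|A|$ (the latter via Kac's return-time formula combined with mixing on $A^c$). Substituting these into the last display and choosing $C$ large so that $n e^{-\lambda_G T_0} \leq n^{-1}$, the middle term dominates and yields $|E_y[H_A] - E[H_A]| \leq c n(d-1)^{-s}\log n$; dividing by $E[H_A] \gtrsim n/|A|$ gives the desired bound with even a smaller log power than stated. The main technical obstacle is therefore not the Markov-property decomposition itself but extracting from \eqref{a:reg}--\eqref{a:gap} sharp enough two-sided estimates on $E[H_A]$ and $\max_z E_z[H_A]$, and the polylogarithmic factor $\log^4 n$ in the proposition is ample room to absorb any imprecision in those classical bounds.
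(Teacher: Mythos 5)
Your proposal is correct and follows essentially the same route as the paper's proof: apply the Markov property at a cutoff time past the mixing time, bound $P_y[H_A<T]$ via Lemma~\ref{lm:mix} (with $r=0$ and a union bound over $x\in A$), and combine with crude two-sided estimates on hitting times from stationarity. The only differences are cosmetic — the paper takes the cutoff $T=\lambda_G^{-1}\log^2 n$ and gets by with the weaker bounds $E[H_A]\ge n/(4|A|)$ and $\sup_z E_z[H_A]\le T+E[H_A]+o(1)$ in place of your two-sided $\asymp n/|A|$ estimates, which is why it lands on $\log^4 n$ rather than a smaller power.
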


\begin{proof}[Proof of Proposition~\ref{pr:Eratio}.]
  In essence, the proof is an application of the estimate \eqref{eq:I}, which
  shows that the distribution of the random walk on $G$ at time
  $T=\lambda_G^{-1} \log^2 n$ is close to uniform. From Lemma~\ref{lm:mix}, we
  know that it is unlikely that the random walk started at $y$ reaches a point
  $x$ in $A$ before time $T$ and this will yield \eqref{eq:Eratio}.

  We shall require the following rough bounds:
  \begin{equation}
    \label{Er1} \frac{n}{4|A|} \leq E[H_A] \leq \sup_{z \in G} E_z[H_A]
    \leq c n\log n, \textrm{ for some constant } c>0.
  \end{equation}
  The first inequality in \eqref{Er1} follows from the right-hand estimate of
  \eqref{eq:EH} with $C$ chosen as $A^c$, \eqref{eq:Dgg}, and our assumption
  that $|A|\leq n/2$. To prove the last inequality in \eqref{Er1}, observe that
  for  $t= 2 \log n/\gap$, assumption \eqref{a:gap} and \eqref{eq:I} imply
  \begin{equation}
    \inf_{z \in V} P_z[H_A \leq 2 \log n / \gap]
    \geq \inf_{z \in V}P_z [X_{ 2 \log n / \gap} \in A] \geq (2n)^{-1}
  \end{equation}
  By the simple Markov property applied at integer multiples of  $t$, it
  follows that $H_A$ is stochastically dominated by $t$ times a geometrically
  distributed random variable with success probability $1/2n$ and \eqref{Er1}
  readily follows.

  Let $y$ be chosen as in the statement and let us first consider the
  expectation of $H_A$ starting from $X_{T}$. From \eqref{eq:I} and our crude
  estimate \eqref{Er1}, we obtain, for any $z \in V$,
  \begin{equation}
    \begin{split}
      \label{Er2}
      \bigl| E_z[ E_{X_T}[H_A] ] - E[H_A] \bigr|
      &\leq \sum_{z' \in V} \big| P_z[X_T = z'] - \pi_{z'}\big| E_{z'}[H_A] \\
      &\leq \sum_{z' \in V} e^{-\log^2 n} n \log n
      \le n^3 e^{-\log^2 n}.
    \end{split}
  \end{equation}
  We now apply this inequality to find an upper bound on $E_y[H_A]$. Since
  $H_A \leq T + H_A \circ \theta_{T}$, the simple Markov property applied at
  time $T$ and \eqref{Er2} imply that for any $z \in V$,
  \begin{equation}
    \label{Er3} E_z[H_A] \leq T + E_z \bigl[ E_{X_T}[H_A] \bigr]
    \leq T + n^{3} e^{-\log^2 n} + E[H_A].
  \end{equation}
  With the first inequality in \eqref{Er1}, we deduce that
  \begin{align}
    \label{Er5}
    \frac{E_z[H_A]}{E[H_A]} - 1 &\leq  (T + n^{3} e^{-\log^2 n})
    \frac{4|A|}{n} \leq \frac {c |A| \log^2 n }{n}.
  \end{align}
  which is ample for one side of \eqref{eq:Eratio}. To prove the other half of
  \eqref{eq:Eratio}, choose $y$ as in the statement and apply the simple Markov
  property at time $T$ to infer that
  \begin{equation}
    \begin{split}
      E_y[H_A]
      &\geq E_y [\mathbf{1}_{\{H_A > T\}} E_{X_T}[H_A] ]
      = E_y [ E_{X_T}[H_A] ] - E_y [\mathbf{1}_{\{H_A \leq T\}} E_{X_T}[H_A] ] \\
      &\stackrel{\eqref{Er2}}{\geq} E[H_A] - n^{3} e^{-\log^2 n} -
      P_y[H_A \leq T]  \sup_{z \in V} E_z[H_A]\\
      &\stackrel{\eqref{Er3}}{\geq} E[H_A] - 2 n^{3} e^{-\log^2 n} -
      P_y[H_A \leq T]  ( T + E[H_A]).
    \end{split}
  \end{equation}
  Applying \eqref{eq:hmix} to the probability on the right-hand side and
  rearranging, we find that
  \begin{equation}
    \frac{E_y[H_A]}{E[H_A]} -1 \geq - c |A| (d-1)^{-s}  \log^4 n,
  \end{equation}
  which together with \eqref{Er5} completes the proof of
  Proposition~\ref{pr:Eratio}.
\end{proof}

We now analyse the distribution of the hitting time of a point $y$ conditioned
on the event that a certain set $A$ is vacant. This estimate will be helpful
for the analysis of the breadth-first search algorithm used in
Theorem~\ref{t:subcritical}.

For any non-empty connected set $A\subset V$, $r\ge 1$, and $y\in \partial_e A$
we define
\begin{equation}
  \begin{split}
    \label{e:future}
    \Q_A(y,r)=
    \{&z\in B(A,r)\setminus A: z \text{ is connected to $y$ in
        $B(A,r)\setminus A$}\}.
  \end{split}
\end{equation}
Observe that $y\in \Q_A(y,r)$. In the breadth-first search algorithm to be
introduced in Section~\ref{s:sub}, the set $\Q_A(y,r)$ can be viewed as the
`future of $y$ seen from $A$'. We say that $\Q_A(y,r)$ is \textit{proper} when
(see Figure~\ref{f:proper})
\begin{equation}
  \begin{split}
    \label{e:propercone}
    \text{(i)}\quad&\tx\big(\Q_{A}(y,r)\big)=0,\\
    \text{(ii)}\quad&\text{$y$ has a unique neighbour $\bar y$ in $A$},\\
    \text{(iii)}\quad&
    \begin{aligned}[t]
      &\text{for any vertex
        $y'\in A\setminus \bar y$, every path from $y$ to $y'$}\\[-1.4mm]
      &\text{leaves $B(A,r)\setminus A$ before reaching $y'$.}
    \end{aligned}
  \end{split}
\end{equation}
\begin{figure}
  \includegraphics[height=6cm]{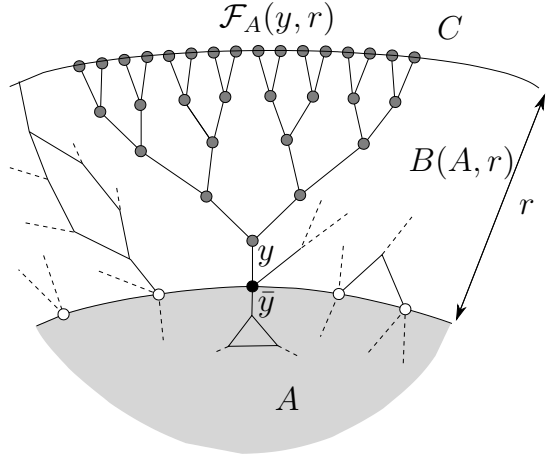}
  \caption{Proper $\Q_A(y,r)$ (gray points) on a $3$-regular graph with $r=5$.}
  \label{f:proper}
\end{figure}
\begin{proposition}
  \label{p:c}
  Let $s\in[2,(\good\wedge \frac 12) \ld n)$, $A\subset V$, $A\neq \varnothing$
  with $ |B(A,s)|\le \sqrt {n }$, and $y\in \partial_e A$, such that $\Q_A(y,s)$
  is proper. Then, for any $T > 1$,
  \begin{equation}
    \label{eq:c}
    \bigg|
    \ln P \big[H_{A \cup \{y\}} > T \bigl| H_{A} > T \big] +
    \frac{T}{n} \frac{(d-2)^2}{d(d-1)}
    \bigg|
    \le \frac{c|A|}{ n}
    \Big(\frac{T|A|\ln^4 n}{(d-1)^{s}} +1 \Big).
  \end{equation}
\end{proposition}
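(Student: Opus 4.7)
The plan is to write
\[
P\bigl[H_{A\cup\{y\}} > T \mid H_A > T\bigr] = \frac{P[H_{A\cup\{y\}} > T]}{P[H_A > T]},
\]
to approximate each factor by $\exp(-T\beta_B)$ with rate $\beta_B := 1/E[H_B]$ for $B\in\{A,\,A\cup\{y\}\}$, and to compute the rate difference $\beta_{A\cup\{y\}}-\beta_A$ exactly using the tree-like geometry of $G$ around $A\cup\{y\}$.

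The first step is an \emph{exponentiality} estimate: for both $B = A$ and $B = A \cup \{y\}$ (both of size $\le \sqrt n$ by hypothesis), I aim to prove $|\ln P[H_B > T] + T\beta_B| \le c|B|/n$. I partition $[0,T]$ into blocks of length $t_0 := c\log^2 n/\gap$, well above the mixing time granted by \eqref{eq:I}, and iterate the strong Markov property: on $\{H_B > jt_0\}$ the conditional law of $X_{jt_0}$ is within total variation $e^{-c\log^2 n}$ of $\pi(\cdot\mid B^c)$, so each block contributes a multiplicative factor $e^{-t_0\beta_B}(1 + O(e^{-c\log^2 n}))$. Telescoping, and absorbing the residual offset at time $0$ via the crude bound $E[H_B] \ge n/(4|B|)$ derived in the proof of Proposition~\ref{pr:Eratio}, gives a total additive error $O(|B|/n)$.

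Second, I apply Proposition~\ref{pr:EH} with the choice $C := V \setminus B(B, s)$ to sandwich $\beta_B$ by $\mathcal D(g^\star_{B,C}, g^\star_{B,C})$. Since $|B(B,s)| \le \sqrt n$, one has $\pi(C)^{-2} \le 1 + O(n^{-1/2})$ on the upper side, while Proposition~\ref{pr:Eratio} yields $\sup_{C}|f^\star| \le c|B|(d-1)^{-s}\ln^4 n$ on the lower side. Combined with the trivial estimate $\mathcal D(g^\star_{B,C}, g^\star_{B,C}) \le c|B|/n$, these give
\[
\bigl|\beta_B - \mathcal D(g^\star_{B,C}, g^\star_{B,C})\bigr| \le c\frac{|B|^2\ln^4 n}{n(d-1)^s} + c\frac{|B|}{n^{3/2}}.
\]
By \eqref{eq:Dgg} and the uniformity of $\pi$, $n\mathcal D(g^\star_{B,C}, g^\star_{B,C}) = \sum_{z\in B} P_z[\tilde H_B > H_{V\setminus B(B,s)}]$. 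For each $z \in A \cup \{y\}$, assumption \eqref{a:tx1} gives $\tx(B(z,s))\le 1$, and properness of $\Q_A(y,s)$ ensures the $s$-subtree rooted at the fresh children of $y$ contains no cycle; this lets me couple the random walk started at $z$ up to its exit from $B(B,s)$ with simple random walk on $\mathbb T_d$, exactly as in Lemma~\ref{lm:hit}, with a per-term error $O((d-1)^{-s})$.

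On $\mathbb T_d$ the capacity increment $\text{cap}(A\cup\{y\})-\text{cap}(A) = \sum_z \bigl(P_z^{\mathbb T_d}[\tilde H_{A\cup\{y\}}=\infty] - P_z^{\mathbb T_d}[\tilde H_A=\infty]\bigr)$ splits by the location of $z$. For $z \in A\setminus\{\bar y\}$, condition~(iii) of \eqref{e:propercone} forces every path from $z$ to $y$ through $\bar y\in A$, so the contribution vanishes. For $z = \bar y$, adding $y$ to the forbidden set lowers the escape probability by $(1/d)\cdot(d-2)/(d-1)$. For $z = y$, the walk either returns to $\bar y$ (probability $1/d$) or steps into one of the $d-1$ fresh children, each with subsequent escape probability $(d-2)/(d-1)$, giving total contribution $(d-2)/d$. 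The net increment is $(d-2)/d - (d-2)/(d(d-1)) = (d-2)^2/(d(d-1))$, which is precisely the coefficient in \eqref{eq:c}. Combining the three ingredients,
\[
\ln P[H_{A\cup\{y\}} > T \mid H_A > T] = -\frac{T(d-2)^2}{nd(d-1)} + O\!\left(\frac{T|A|^2\ln^4 n}{n(d-1)^s} + \frac{|A|}{n}\right),
\]
which is exactly the claimed bound. The main obstacle is the exponentiality step: the sharpened $O(|A|/n)$ additive error is not given by a generic total-variation mixing bound and requires a careful quasi-stationary analysis of the walk killed on $B$, controlling both the principal Dirichlet eigenvalue $\lambda_1^B$ of the killed generator and the normalization $\langle \mathbf{1}, \phi_1^B\rangle_\pi$ of its principal eigenfunction.
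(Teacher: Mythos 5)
Your high-level strategy coincides with the paper's: write the conditional probability as a ratio, replace each factor by $\exp(-T/E[H_B])$ up to a controlled error, use Proposition~\ref{pr:EH} and Proposition~\ref{pr:Eratio} to replace $1/E[H_B]$ by $\mathcal D(g^\star_{B,C},g^\star_{B,C})$ with $C=B(B,s)^c$, and then compute the Dirichlet-form difference explicitly by \eqref{eq:Dgg}, the properness assumptions \eqref{e:propercone}, and the gambler's-ruin formula \eqref{eq:gamble}. Your capacity-increment computation — cancellation for $z\in A\setminus\{\bar y\}$ via (iii), the $\tfrac1d\cdot\tfrac{d-2}{d-1}$ contribution at $\bar y$, the $\tfrac{d-2}{d}$ contribution at $y$, yielding $\tfrac{(d-2)^2}{d(d-1)}$ — is exactly the paper's and is correct.

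However, the exponentiality step is not established. You claim that ``on $\{H_B>jt_0\}$ the conditional law of $X_{jt_0}$ is within total variation $e^{-c\log^2 n}$ of $\pi(\cdot\mid B^c)$.'' This is false: by reversibility the conditional law is proportional to $\pi(z)\,P_z[H_B>t_0]$, and for $z$ close to $B$ the factor $P_z[H_B>t_0]$ is bounded away from $1$ by an amount of order $(d-1)^{-\dist(z,B)}$, so the TV distance to $\pi(\cdot\mid B^c)$ is only polynomially small in $n$, not $e^{-c\log^2 n}$. Consequently the telescoping over $\Theta(T/\log^2 n)$ blocks does not, as written, deliver the claimed additive error $O(|B|/n)$; one would have to track how the conditional law relaxes toward the quasi-stationary distribution $\alpha_B$, which is precisely the ``careful quasi-stationary analysis'' you flag at the end. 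The paper avoids this by invoking the Aldous--Brown bounds \cite{AB}: $(1-\tfrac{1}{\lambda_G E_{\alpha_B}H_B})e^{-t/E_{\alpha_B}H_B}\le P[H_B>t]\le(1-\pi(B))e^{-t/E_{\alpha_B}H_B}$, together with $0\le E_{\alpha_B}H_B-E H_B\le\lambda_G^{-1}$ and $E H_B\ge c\sqrt n$, which give the sharp $O(|B|/n)$ prefactor error and the $O(T|B|^2/n^2)$ rate error in one shot. You should replace the block-iteration heuristic by this citation (or prove the quasi-stationary estimate directly); with that substitution the rest of your argument goes through and matches the paper's proof.
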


\begin{proof}
  We set
  \begin{equation}
    \label{e:uiz}
    F_A^y(T)=P [H_{A \cup \{y\}} > T | H_{A} > T ]
    =\frac{P[H_{A\cup \{y\}}>T]}{P[H_A>T]} ,
  \end{equation}
  and use results of \cite{AB} to estimate both numerator and denominator.
  Namely, by \cite{AB} (1) and Theorem 3, for any $A\subset V$, $t>0$,
  \begin{equation}
    \label{e:uia}
    \Big(1-\frac 1{\lambda_G E_{\alpha_A} H_A}\Big)
    \exp\Big(-\frac t{E_{\alpha_A} H_A}\Big)
    \le P[H_A > t]
    \le (1-\pi (A))
    \exp\Big(-\frac t{E_{\alpha_A} H_A}\Big) .
  \end{equation}
  Here $\alpha_A$ is the quasi-stationary distribution for the random walk
  killed on hitting $A$. We will only need its following properties, see
  \cite{AB} Lemma 2 and Corollary 4,
  \begin{equation}
    \label{e:uib}
    \frac{1-\pi (A)}{\sum_{x\in A, y\in A^c}\pi (x)p_{xy}}
    \le \frac{E H_A}{1-\pi (A)}
    \le E_{\alpha_A}H_A
    \le E H_A + \lambda_G^{-1} .
  \end{equation}
  Observe that the left-hand side is bounded from below by
  $n/(2|A|)\ge c \sqrt n$ for $A$ as in the statement.

  Writing
  $\tilde A=A\cup \{y\}$, $\alpha = \alpha_A$ and
  $\tilde \alpha = \alpha_{\tilde A}$, and applying \eqref{e:uia} for $A$ as
  well as for $\tilde A$ to bound the conditional expectation
  \eqref{e:uiz}, we obtain
  after rearranging and taking logarithm
  \begin{equation}
    \label{e:uic}
    \ln\frac{1-\frac{1}{\lambda_G E_{\tilde\alpha} H_{\tilde A}}}{1-\pi ( A)}
    \le
    \ln F_A^y(T) -
    \frac{T}{E_\alpha H_A}+ \frac{T}{E_{\tilde \alpha } H_{\tilde A}}
    \le
    \ln\frac{1-\pi (\tilde A)}{1-\frac{1}{\lambda_G E_\alpha H_A}}.
  \end{equation}
  Using \eqref{e:uib} and the observation following it, we see that
  $E_\alpha H_A\ge c \sqrt n$. Therefore, by expanding the function
  $\ln(1/(1-x))$ around zero and using \eqref{e:uib} again, we obtain that the
  right most term in \eqref{e:uic} is bounded from above by $c|A|/n$, which can
  be included in the error term of \eqref{eq:c}. A similar reasoning implies
  that the left-most term in \eqref{e:uic} is bounded from below by $-c|A|/n$,
  which can again be accommodated into the error of \eqref{eq:c}.

  The inequalities in \eqref{e:uib} further imply that
  $0\le E_\alpha H_A-E H_A \le \lambda_G^{-1}$ and therefore
  \begin{equation}
    \bigg|\frac T {E_\alpha H_A}-\frac T {E H_A}\bigg|\le
    \frac {c T}{(E H_A)^2}
    \le c T \frac {|A|^2}{n^2},
  \end{equation}
  where in the last inequality we used \eqref{e:uib} again. The right-hand term
  in the last display is again smaller than the error in \eqref{eq:c}, since
  $(d-1)^{-s}\ge n^{-1/2}$ by assumption $s\le \frac 12\ld n$.

  Finally, we use Proposition~\ref{pr:EH} to approximate $1/EH_A$ and
  $1/EH_{\tilde A}$. To this end we introduce $C=B(A,s)^c$ and we set
  $g=g^\star_{A,C}$, see \eqref{e:potential}. Then by Proposition~\ref{pr:EH}
  we have that
  \begin{equation}
    \label{e:uid}
    \bigg|\frac{T}{EH_A}- T\mathcal D(g,g)\bigg|
    \le T\mathcal D(g,g)\Big( \pi (C)^{-2}-1 +
      2 \sup_{z\in C} \Big| \frac{E_z H_A}{E H_A} - 1 \Big| \Big).
  \end{equation}
  By \eqref{eq:Dgg}, $\mathcal D(g,g)\le \pi (A)$.  From the assumption
  $B(A,s)\le \sqrt n$ it follows that
  $\pi (C)^{-2}-1\le c n^{-1/2}\le c (d-1)^{-s}$. Finally, the
  Proposition~\ref{pr:Eratio} implies that the supremum in \eqref{e:uid} is
  bounded by $c|A|(d-1)^{-s}\ln^4 n$. Hence, the right-hand side of
  \eqref{e:uid} is smaller than the error term in \eqref{eq:c}. An analogous
  computation proves that $T/E H_{\tilde A}$ is approximated by
  $T \mathcal D(\tilde g, \tilde g)$, where $\tilde g =g^\star_{\tilde A,C}$. A
  little bit of care is only needed when applying Proposition~\ref{pr:Eratio},
  since $\dist(\tilde A, C)=s-1$.

  We have thus proved that $\ln F_A^y(T)$ is well approximated by
  $T\big(\mathcal D(g,g)-\mathcal D(\tilde g,\tilde g)\big)$ up to the error on
  the right-hand side of \eqref{eq:c}.  We now estimate this expression. Let
  $\bar y$ be the unique neighbour of $y$ in $A$. By \eqref{eq:Dgg},
  \begin{equation}
    \label{e:uie}
    T\big(\mathcal{D}(g,g) - \mathcal{D}(\tilde g,\tilde g)\big)
    = \frac{T}{n} \Big[
      \sum_{z \in A} P_z [\tilde H_{A} > H_{C}]
      -\sum_{z \in \tilde A } P_z [\tilde H_{\tilde A} > H_{ C}]
      \Big].
  \end{equation}
  Now we use our assumption that $\Q_A(y,s)$ is proper. Due to
  \eqref{e:propercone}(iii), for all $z\in A\setminus \{\bar y\}$, there is no
  path from $z$ to $y$ using only vertices in $B(A,r)\setminus A$. Therefore,
  for such $z$ $P_z [\tilde H_{\tilde A} > H_{C}] = P_z [\tilde H_{A} > H_{C}]$,
  and \eqref{e:uie} equals
  \begin{equation}
    \frac{T}{n} \big[
      P_{\bar y}[\tilde H_{A} > H_{C}]
      - P_{\bar y}[\tilde H_{\tilde A} > H_{C}]
      -P_y[\tilde H_{\tilde A } > H_{C}]
      \big].
  \end{equation}
  Conditioning the first two terms on $X_{\tau_1}$, since $\Q_A(y,s)$ is
  proper, we get
  \begin{equation}
    \label{e:uif}
    \frac{T}{n} \Big\{
      \frac{1}{d} P_{y}[H_{A} > H_{C}]
      -P_y[\tilde H_{\tilde A } > H_{C}]
      \Big\}.
  \end{equation}
  Since by assumption  $\tx (\Q_A(y,s)) =0 $, these probabilities can be
  computed using the formula \eqref{eq:gamble} for the random walk with
  drift. Setting $q=1/(d-1)$ we have
  \begin{equation}
    P_y[\tilde H_{\tilde A } > H_{C}]=\frac {d-1}{d} \frac
    {1-q}{1-q^{s-1}}, \quad \text{and}\quad
    P_{y}[H_{A} > H_{C}]=\frac{1-q}{1-q^s}.
  \end{equation}
  Inserting this into \eqref{e:uif} we obtain that
  \begin{equation}
    \bigg|T\big(\mathcal{D}(g,g) - \mathcal{D}(\tilde g,\tilde g)\big)+
    \frac{T(d-2)^2}{nd(d-1)}\bigg| \le
    \frac{c T}{n} \frac{1}{(d-1)^{s}}.
  \end{equation}
  This completes the proof, since the error is smaller than the right-hand side
  of \eqref{eq:c}.
\end{proof}

We now use the same techniques to control the hitting time distribution of a point
with tree-like neighbourhood.
\begin{lemma}
  \label{l:hitpoint}
  Let $y\in V$ be such that $\tx(B(y,s))=0$ for some $s\in [1,\good \ld n)$.
  Then, for any $T>1$,
  \begin{equation}
    \label{e:hitpoint}
    \Big|\ln P[H_y>T] + \frac {T(d-2)}{n(d-1)}\Big|
    \le \frac {c}n \Big(\frac {T\ln^4 n}{(d-1)^s}+1\Big).
  \end{equation}
\end{lemma}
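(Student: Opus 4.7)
The plan is to follow the same strategy as in the proof of Proposition~\ref{p:c}, but with the singleton $A=\{y\}$ and without any conditioning. First, I would invoke the bounds \eqref{e:uia} from \cite{AB} applied to the set $\{y\}$, which after taking logarithms yield
\begin{equation*}
  \ln P[H_y>T]=-\frac{T}{E_{\alpha_{\{y\}}}H_y}+O(n^{-1}),
\end{equation*}
using \eqref{e:uib} (and the fact that $EH_y\ge c n$ since $|A|=1$) to control the prefactor terms $\ln(1-\pi(\{y\}))$ and $\ln(1-(\lambda_G E_\alpha H_y)^{-1})$. By the right-hand inequality of \eqref{e:uib}, $E_{\alpha_{\{y\}}}H_y-EH_y=O(\lambda_G^{-1})=O(1)$, so replacing $E_{\alpha_{\{y\}}}H_y$ by $EH_y$ introduces an error of order $T/(EH_y)^2=O(T/n^2)$, which is comfortably inside the right-hand side of \eqref{e:hitpoint}.

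Next, I would apply Proposition~\ref{pr:EH} with $A=\{y\}$ and $C=B(y,s)^c$. Since $|A|=1\le n/2$ and $\tx(B(y,s))=0$ by hypothesis, Proposition~\ref{pr:Eratio} bounds $\sup_{z\in C}|E_z H_y/E H_y-1|$ by $c(d-1)^{-s}\ln^4 n$, and $\pi(C)^{-2}-1=O(n^{-1/2})$ by the bound $|B(y,s)|\le (d-1)^s\le\sqrt n$ coming from $s<\good\,\ld n\le\frac12\ld n$ (on relabelling $\good$ if necessary). Writing $g=g^\star_{\{y\},C}$, this gives
\begin{equation*}
  \Big|\frac{T}{EH_y}-T\mathcal D(g,g)\Big|\le \frac{cT\ln^4 n}{n(d-1)^s},
\end{equation*}
exactly as in \eqref{e:uid} with $|A|=1$.

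Finally, I would evaluate $\mathcal D(g,g)$ explicitly. By \eqref{eq:Dgg} and \eqref{a:reg},
\begin{equation*}
  \mathcal D(g,g)=\pi_y\, P_y[\tilde H_y>H_C]=\frac{1}{n}P_{y'}[H_C<H_y],
\end{equation*}
where $y'$ is any neighbour of $y$ (all choices give the same value by symmetry). Because $\tx(B(y,s))=0$, the ball $B(y,s)$ is a tree, so the distance process from $y'$ is a biased walk on $\{0,1,\dots,s\}$ with step right probability $(d-1)/d$ and step left probability $1/d$, started at $1$. The gambler's ruin formula \eqref{eq:gamble} with $q=1/(d-1)$, $x=1$, $R=s$ then yields
\begin{equation*}
  P_{y'}[H_C<H_y]=\frac{(d-2)(d-1)^{s-1}}{(d-1)^s-1}=\frac{d-2}{d-1}+O\bigl((d-1)^{-s}\bigr),
\end{equation*}
so $T\mathcal D(g,g)=T(d-2)/(n(d-1))+O(T/(n(d-1)^s))$, which upon assembly with the two previous steps produces exactly the bound claimed in \eqref{e:hitpoint}.

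The hard part is really just bookkeeping: checking that each approximation error (from \cite{AB}, from Proposition~\ref{pr:EH}, from Proposition~\ref{pr:Eratio}, and from the explicit gambler's ruin computation) fits into the right-hand side $\tfrac cn(T\ln^4 n(d-1)^{-s}+1)$. Since $|A|=1$ throughout, all the $|A|$-dependent factors appearing in those earlier estimates are as favourable as possible, and no new ideas beyond those in the proof of Proposition~\ref{p:c} are needed.
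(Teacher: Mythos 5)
Your proposal follows exactly the paper's route: the paper proves this lemma by rerunning the proof of Proposition~\ref{p:c} with $\tilde A=\{y\}$ and without the conditioning (i.e.\ controlling only the numerator of \eqref{e:uiz}), which is precisely your combination of the bounds from \cite{AB}, Proposition~\ref{pr:EH}, Proposition~\ref{pr:Eratio}, and the gambler's-ruin evaluation of $\mathcal D(g,g)$. The only cosmetic differences are that the paper computes $\mathcal D(g,g)=n^{-1}P_y[\tilde H_y>H_{B(y,s)^c}]$ directly rather than via your first-step decomposition, and that it treats the admissible range of $s$ with the same brevity as your ``relabelling $\good$'' remark.
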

\begin{proof}
  The proof follows the same lines as the previous one with $\tilde A=\{y\}$
  and without the conditioning, which is equivalent to controlling the
  numerator of \eqref{e:uiz} only.  The same reasoning as before implies that,
  for $g=g^\star_{y,B(y,s)^c}$, $|\ln P[H_y\ge T]-T\mathcal D(g,g)|$ is smaller
  than the right-hand side of \eqref{e:hitpoint}. Using \eqref{eq:Dgg},
  \eqref{eq:gamble}, with $q=1/(d-1)$ again, we get
  \begin{equation}
    \mathcal D(g,g)=n^{-1}P_y[\tilde H_y>H_{B(y,s)^c}]= \frac{1}{n}\frac{1-q}{1-q^s}=
    \frac{d-2}{n(d-1)}+O((d-1)^{-s}),
  \end{equation}
  which finishes the proof.
\end{proof}

\section{Piecewise independent measure}
\label{s:partsbridges}
We now make another preparative step in order to prove our main results. In
later sections, it will be convenient to split the random walk trajectory
$X_{[0,un]}$ into smaller pieces and to treat pieces that are sufficiently
distant in time as being independent of one another. Although this kind of
independence does not hold under the random walk measure $P^{un}$, we will in
this section construct a new measure on the space of trajectories with the
desired independence properties. In Lemma~\ref{l:partsbridges}, we then
estimate the error we make when replacing $P^{un}$ by this new measure.

For the construction, we choose real parameters
\begin{equation}
  L = n^\gamma , \quad \ell = (\ln n)^2,
\end{equation}
where $\gamma \in (0,1)$  will be fixed later. We consider an abstract
probability space $(\Omega ,\Plr)$ (expectation denoted by $\Elr$)
on which we define a sequence of
i.i.d.~random variables $Y^i$, $i\ge 0$, with values in $D([0,L],V)$ and
the marginal distribution $P^L$ (defined in Section~\ref{ss:rw}). We set
$a_i$, $b_i$ to be the start- and the end-point of $Y^i$, $a_i=Y^i_0$,
$b_i=Y^i_L$.  On the same space $\Omega $ we further define a sequence of
random variables $Z^i$, $i\ge 0$, with values in $D([0,\ell],V)$.  Given
$a_i$, $b_i$, $i\ge 0$, the random variables $Z^i$ are independent,
conditionally independent of the sequence $(Y^i)$, and the random
variable $Z^i$ has the random-walk bridge distribution
$P^\ell_{b_i,a_{i+1}}$. We call the $Y^i$'s \textit{segments} and  $Z^i$'s
\textit{bridges}.

We now concatenate the $Y^i$'s and $Z^i$'s to obtain an element of
$D([0,\infty],V)$. More precisely,  we define the concatenation mapping
$\mathcal X$ from $\Omega $ to $D([0,\infty),V)$ as follows: For
$t\ge 0$, let $i_t \in \mathbb N$ and $s\in [0,L+\ell)$ be given by
$t=i_t(L+\ell)+s_t$. Then, for $t\ge 0$,
\begin{equation}
  \label{e:mathcalX}
  \mathcal X_t(Y^0,Z^0,Y^1,Z^1,\dots)=
  \begin{cases}
    Y^{i_t}_{s_t},&\text{if $0 \le s_t\le L$},\\
    Z^{i_t}_{s_t-L},&\text{if $L<s_t < L+\ell$}.
  \end{cases}
\end{equation}
The mapping $\mathcal X$ induces a new probability measure
$Q= \Plr \circ  {\mathcal X}^{-1}$ on
$D([0,\infty),V)$. We use  $Q^s$, $s\ge 0$, to denote the restriction
of $Q$ to $D([0,s],V)$. The measure $Q^\cdot$  will be used to
approximate $P^\cdot$ later. We control this approximation now.

\begin{lemma}
  \label{l:partsbridges}
  For every fixed $u>0$, the measures $P^{un}$ and $Q^{un}$ are
  absolutely continuous and there exist constants $c,c'>0$ depending only on
  $\gap$ such that
  \begin{equation}
    \label{e:partsbridges}
    \Big|\frac {\d P^{un}}{\d Q^{un}}-1\Big| \le
    c' u e^{-c \ln^2 n}.
  \end{equation}
\end{lemma}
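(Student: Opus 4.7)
The plan is to write out the Radon--Nikodym derivative $dP^{un}/dQ^{un}$ explicitly and bound each factor using the mixing estimate \eqref{eq:I}. The key observation is that $P^{un}$ and $Q^{un}$ agree on almost everything: both start from $a_0\sim\pi$, both give each segment $Y^i$ the conditional law of a random walk of length $L$ started at $a_i$, and both give each bridge $Z^i$ the conditional law $P^\ell_{b_i,a_{i+1}}$ of the random-walk bridge given its endpoints. They differ only in the joint law of the pair $(b_i,a_{i+1})$: under $P^{un}$ the next starting point $a_{i+1}$ has conditional law $P_{b_i}[X_\ell=\cdot\,]$ given $b_i$, while under $Q^{un}$ it is drawn from $\pi$ independently of the past. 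Mutual absolute continuity of $P^{un}$ and $Q^{un}$ is a by-product of this decomposition.

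This dichotomy yields the product formula
\begin{equation*}
  \frac{dP^{un}}{dQ^{un}}
  =\prod_{i=0}^{N-1}\frac{P_{b_i}[X_\ell=a_{i+1}]}{\pi(a_{i+1})},
\end{equation*}
where $N=\lfloor un/(L+\ell)\rfloor$ counts the complete gluings in $[0,un]$. If $un$ falls inside a bridge rather than a segment, the truncated tail contributes one extra factor obtained by integrating the unobserved endpoint $a_{N+1}\sim\pi$, of the form $\sum_{a}\pi(a)P_{x}[X_{\ell'}=a]/P_{b_N}[X_\ell=a]$ for some $\ell'<\ell$; the mixing estimate below controls it by $1+O(ne^{-\gap\ell})$ just as for the other factors.

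By \eqref{eq:I} combined with assumption \eqref{a:gap}, $\sup_{x,y}|P_x[X_\ell=y]-\pi(y)|\leq e^{-\gap\ell}=e^{-\gap\log^2 n}$, so since $\pi(y)=1/n$ each ratio in the product lies in $[1-ne^{-\gap\log^2 n},\,1+ne^{-\gap\log^2 n}]$. Multiplying at most $N+1\leq un/L+1$ such factors gives
\begin{equation*}
  \Bigl|\tfrac{dP^{un}}{dQ^{un}}-1\Bigr|
  \leq \exp\!\bigl((N+1)\,n e^{-\gap\log^2 n}\bigr)-1.
\end{equation*}
Since $(N+1)n$ is polynomial in $n$ while $e^{-\gap\log^2 n}$ decays super-polynomially, the exponent is at most $c'u\,e^{-c\log^2 n}$ for suitable $c,c'>0$ depending only on $\gap$ (take $c=\gap/2$; the $n$-factors are absorbed once $n$ is large, while for small $n$ the bound is trivial by enlarging $c'$). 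This is \eqref{e:partsbridges}.

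The only real obstacle is the bookkeeping for the truncated tail piece and verifying that the factor $n$ coming from $\pi(y)=1/n$ is swallowed by the super-polynomial decay $e^{-\gap\log^2 n}$; both are routine, so the lemma reduces to a clean marriage of a product Radon--Nikodym computation with the exponential mixing bound \eqref{eq:I}.
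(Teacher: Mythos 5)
Your proposal is correct and follows essentially the same route as the paper: both condition on the skeleton of segment/bridge endpoints, observe that the two measures differ only in whether the next starting point is drawn from $P_{b_i}[X_\ell=\cdot\,]$ or from $\pi$, and bound each resulting ratio by $1\pm n e^{-\gap\ell}$ via \eqref{eq:I} before multiplying out the polynomially many factors. The only cosmetic difference is in handling the endpoint of the time interval: the paper rounds $un$ up to the next multiple of $L+\ell$ and compares the extended measures on $\mathcal F_{un}$-measurable events, whereas you treat the truncated last piece with an explicit extra factor, which your averaging argument (using that $\pi$ is stationary) does indeed control by $1+O(ne^{-\gap\ell})$.
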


\begin{proof}
  Let $u'\ge u$ be the smallest number such that $u'n$ is an integer multiple
  of $(L+\ell)$, and set $m=u'n/(L+\ell)\in \mathbb N$. Let further $A$ be an
  arbitrary $\mathcal F_{un}$-measurable subset of $D([0,u'n],V)$. Since
  $P^{un}$ and $Q^{un}$ are the restrictions of $P^{u'n}$ and
  $Q^{u'n}$   to $D([0,un],V)$, it is sufficient to prove the lemma
  with $u$ replaced by $u'$. To this end, we set $t_{2k}=k(L+\ell)$,
  $t_{2k+1}=k(L+\ell)+L$ for $k\in \{0,\dots,m\}$, and write
  \begin{equation}
    \label{e:PP0}
    P^{u'n}[A]=\sum_{x_0,\dots,x_{2m}\in V}
    P^{u'n}[A|X_{t_i}=x_i,0\le i\le 2m]
    P^{u'n}[X_{t_i}=x_i,0\le i\le 2m].
  \end{equation}
  By the Markov property
  \begin{equation}
    \label{e:PP}
    P^{u'n}[X_{t_i}=x_i,0\le i\le 2m] =
    \pi(x_0)\prod_{k=0}^{m-1}
    P^L_{x_{2k}} [X_L=x_{2k+1}] P^\ell_{x_{2k+1}} [X_\ell =x_{2k+2}].
  \end{equation}
  The construction of the measure $Q$ implies that
  \begin{align}
    Q^{u'n}[A|X_{t_i}=x_i,0\le i\le 2m]&=
    P^{u'n}[A|X_{t_i}=x_i,0\le i\le 2m],\\
    \label{e:QQ}
    Q^{u'n}[X_{t_i}=x_i,0\le i\le 2m] &=
    \pi (x_0)\prod_{k=0}^{m-1}
    P^L_{x_{2k}} [X_L=x_{2k+1}] \pi (x_{2k+2}).
  \end{align}
  Comparing \eqref{e:PP} and \eqref{e:QQ}, it remains to control the ratio
  $P^\ell_x[X_\ell=y]/\pi (y)$. However, by \eqref{a:gap} and \eqref{eq:I},  this
  ratio is bounded by $1+ne^{-\gap \ell}$. Hence, \eqref{e:PP0} is bounded from
  above by
  \begin{equation}
    \begin{split}
      & (1+ne^{-\gap \ell })^m\sum_{x_0,\dots,x_{2m}\in V}
      Q^{u'n}[A|X_{t_i}=x_i,0\le i\le 2m]
      Q^{u'n}[X_{t_i}=x_i,0\le i\le 2m]
      \\&\le Q^{u'n}[A] (1+u e^{-c \ln^2 n}),
    \end{split}
  \end{equation}
  where, in the last inequality, we changed the constants to accommodate the
  terms polynomial in $n$. A lower bound can be obtained analogously. We have
  thus shown
  \begin{align}
    \label{e:poiu}
    Q^{u'n}[A] (1-u e^{-c \ln^2 n}) \leq P^{u'n}[A]
    \leq Q^{u'n}[A] (1+u e^{-c \ln^2 n}).
  \end{align}
  It immediately follows that $P^{u'n}$ and $Q^{u'n}$ are absolutely
  continuous. Moreover, the fact that \eqref{e:poiu} holds for any event $A$ in
  ${\mathcal F}_{u'n}$ yields directly the estimate \eqref{e:partsbridges}.
\end{proof}

We end this section with a simple lemma which controls the number of jumps
performed by segments and bridges, which will be useful several times later
(see Subsection~\ref{ss:rw} for the definition of the jump process $(N_t)_{t \geq 0}$).

\begin{lemma}
  \label{lm:visits}
  \begin{equation}
    \label{eq:Wj}
     P[2^{-1} n^\piece < N_L < 2 n^\piece ]
    \geq 1- e^{-c_{\piece }n^\piece }.
  \end{equation}
  For any $x,y \in V$,
  \begin{equation}
    \label{eq:Wijlog3}
    P^\ell_{xy} \big[  N_\ell > \ln^3 n  \big] \leq c\, \exp\{-c' \ln^3 n\}.
  \end{equation}
\end{lemma}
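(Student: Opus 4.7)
The plan is to handle the two inequalities separately. Both are essentially Poisson concentration statements, but the second requires a mild additional argument to remove the bridge conditioning.

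For \eqref{eq:Wj}, I would recall that under $P$ the process $(N_t)_{t \geq 0}$ is a rate-one Poisson process on $\mathbb{R}_+$, as noted in Subsection~\ref{ss:rw}. Hence $N_L$ is Poisson distributed with mean $L = n^\piece$. A standard Chernoff/Cramér bound for Poisson random variables yields, for every $\lambda > 0$,
\begin{equation*}
  P[N_L \leq L/2] + P[N_L \geq 2L] \leq e^{-c L} = e^{-c\, n^\piece},
\end{equation*}
for an absolute constant $c > 0$, which is exactly the desired estimate after naming the constant $c_\piece$.

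For \eqref{eq:Wijlog3} the idea is to remove the bridge conditioning via Bayes and then apply a Poisson tail bound. Writing
\begin{equation*}
  P^\ell_{xy}[N_\ell > \ln^3 n] = \frac{P_x[N_\ell > \ln^3 n,\, X_\ell = y]}{P_x[X_\ell = y]} \leq \frac{P_x[N_\ell > \ln^3 n]}{P_x[X_\ell = y]},
\end{equation*}
I would lower bound the denominator and upper bound the numerator. For the denominator, \eqref{eq:I} combined with \eqref{a:gap} gives
\begin{equation*}
  P_x[X_\ell = y] \geq \pi_y - e^{-\gap \ell} = \frac{1}{n} - e^{-\gap \ln^2 n} \geq \frac{1}{2n}
\end{equation*}
for $n$ large enough, since $\gap \ln^2 n \gg \ln n$. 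For the numerator, $N_\ell$ is Poisson with mean $\ell = \ln^2 n$ under $P_x$, and since $\ln^3 n \gg e\ell$, the classical tail estimate $P[\mathrm{Poisson}(\ell) \geq K] \leq (e\ell/K)^K$ with $K = \ln^3 n$ gives
\begin{equation*}
  P_x[N_\ell > \ln^3 n] \leq \Bigl( \frac{e}{\ln n} \Bigr)^{\ln^3 n} = \exp\bigl\{-\ln^3 n\,(\ln \ln n - 1)\bigr\}.
\end{equation*}
Combining the two bounds,
\begin{equation*}
  P^\ell_{xy}[N_\ell > \ln^3 n] \leq 2n \exp\bigl\{-\ln^3 n\,(\ln \ln n - 1)\bigr\} \leq c \exp\{-c' \ln^3 n\},
\end{equation*}
as the $2n$ factor is absorbed into the super-polynomial decay.

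Neither step is subtle; the only point requiring care is the bridge conditioning in part~(2), which is neutralised by the uniform mixing bound \eqref{eq:I}. This uses \eqref{a:gap} crucially, since without a uniform spectral gap one could not dominate the denominator $P_x[X_\ell = y]$ by a polynomial in $1/n$ for $\ell$ of order only $\ln^2 n$.
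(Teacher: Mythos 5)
Your proof is correct and follows essentially the same route as the paper: Poisson concentration for $N_L$ in the first part, and in the second part the removal of the bridge conditioning by dividing by $\inf_{x,y}P_x[X_\ell=y]\ge (2n)^{-1}$, obtained from \eqref{eq:I} and \eqref{a:gap}. The only cosmetic difference is that you bound the conditional probability directly as a ratio with a Poisson tail estimate, whereas the paper bounds the conditional exponential moment $E^\ell_{xy}[e^{N_\ell}]$ the same way and then applies the exponential Chebyshev inequality; both yield the stated bound after adjusting constants for small $n$.
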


\begin{proof}
  Under the measure $P$, the random variable $N_L$ has Poisson
  distribution with parameter $L=n^\piece $. Hence, \eqref{eq:Wj} follows by
  a standard large deviation argument.

  In order to prove \eqref{eq:Wijlog3}, note first that
  $N_\ell$ is not necessarily a Poisson random variable under $P^\ell_{xy}$,
  due
  to the conditioning on the position of the endpoint. However, using \eqref{eq:I} for the
  last inequality, we have
  \begin{equation}
    \label{eq:EeWij}
    \begin{split}
      \sup_{x,y\in V} E_{x,y}^{\ell}[e^{N_\ell}]
      &= \sup_{x,y\in V} E_x^{\ell} \big[ e^{N_\ell} | X_\ell = y \big]
      \leq  \frac{\sup_{x\in V} E_x^{\ell}[e^{N_\ell}]}{ \inf_{x,y \in V}
        P_x[X_{\ell} = y]} \\
      & \leq \frac{\exp\{(e-1)\ell\}}{\inf_{x,y \in V} P_x[X_{\ell} = y]}
      \le 2n^{-1} \exp\{c \ln^2 n\}
    \end{split}
  \end{equation}
  for $n$ larger than some $c'$.  The exponential Chebyshev inequality then
  implies claim (ii) for such $n$. Adjusting the constants to make the claim
  valid for all $n$ finishes the proof.
\end{proof}
\section{Sub-critical regime}
\label{s:sub}
In this section we prove Theorem~\ref{t:subcritical}, which states that if
$u>u_\star$, then the maximal connected component $\comp_\Max$ of
$\mathcal V_n^u$ is typically of size $O(\ln n)$. We will do it by analysing a
breadth-first-search (BFS) algorithm which explores the component $\comp_x$
of the vacant set containing a given vertex $x$. This algorithm is similar to
the one used in the Bernoulli percolation case, but has some important
modifications due to the dependence in our model.

We start the proof by reducing the complexity of the problem. We set, as in
Section~\ref{s:partsbridges}, $L=n^\gamma$, $\ell=\ln^2 n$, with
$\gamma \in (0,1)$.  Due to Lemma~\ref{l:partsbridges} it is sufficient to show
that Theorem~\ref{t:subcritical} holds with with $P^{un}$ replaced by
$Q^{un}=\Plr \circ \mathcal X^{-1}|_{D([0,un],V)}$.

Since we are looking for an upper bound on the vacant set, we can
disregard the bridges $Z^i$ in the concatenation $\mathcal X$
(cf.~\eqref{e:mathcalX}). More precisely, we set
$m=\lfloor un/(L+\ell)\rfloor$, and we observe that $\Plr$-a.s.~the
vacant set
\begin{equation}
  \mathcal V_n^u
  = V\setminus\{\mathcal X_t((Y^i),(Z^i)):t\in[0,un]\}
\end{equation}
is a subset of the \emph{vacant set left by segments}, $\bV^u$,
\begin{equation}
  \label{e:vsls}
  \bV^u:= V\setminus\cup_{i<m}\Ran Y^i, \text{ where } \Ran Y^i = Y^i_{[0,L]}.
\end{equation}
Let $\bC_\Max=\bC^u_\Max$ and $\bC_x=\bC^u_x$ be the largest connected
component, and the component containing $x$ of $\bV=\bV^u$, respectively.  Then
the inequality
\begin{equation}
  \Plr [|\bC_\Max| \ge K\log n]\le n \Plr \big[|\bC_x| \ge K \log
    n\big]
  \end{equation}
implies that to prove Theorem~\ref{t:subcritical} it is sufficient to show the
following proposition.

\begin{proposition}
  \label{p:onecluster}
  Let $G$ be a connected graph on $n$ vertices satisfying the assumptions
  \eqref{a:reg}, \eqref{a:tx1}, \eqref{a:gap} and let $u>u_\star$. Then for every
  $\sigma >0 $  there exist $1\le c,K<\infty$ not depending on $n$ (but
    depending on $d$, $u$, $\txconst$, $\gap$) such that
  \begin{equation}
    \label{e:onecluster}
    \Plr [|\bC^u_x|\ge K\ld n]\le c n^{-\sigma-1 }.
  \end{equation}
\end{proposition}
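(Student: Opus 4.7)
The plan is to analyze a breadth-first search (BFS) that explores the component $\bC^u_x$ and to stochastically dominate its growth by a subcritical Galton--Watson tree. Starting from $A_0 = \{x\}$ (with a queue initialised to the neighbours of $x$), at each step $t$ I dequeue a vertex $y_t$, test whether $y_t \in \bV^u$, and if so add $y_t$ to $A_{t+1}$ and enqueue its unexplored neighbours. I stop the exploration as soon as $|A_t|$ reaches $K \ld n$ or the queue empties. The goal is to uniformly upper-bound the conditional probability that $y_t$ turns out to be vacant.

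Since the segments $Y^0,\dots,Y^{m-1}$ with $m=\lfloor un/(L+\ell)\rfloor$ are i.i.d., the event $\{A_t \subset \bV^u\}$ factorises across segments, and conditioning on it keeps the laws of the $Y^i$ independent (each now conditioned on $\{H_{A_t}^{Y^i} > L\}$). Applying the FKG inequality on this product law to absorb the negative information carried by previously rejected vertices gives
\begin{equation*}
  \Plr[y_t \in \bV^u \mid \mathcal{H}_t]
  \;\le\; \bigl(P[H_{A_t \cup \{y_t\}} > L \mid H_{A_t} > L]\bigr)^m.
\end{equation*}
Taking $s = \lfloor \tfrac14 \ld n\rfloor$ ensures $s < (\good \wedge \tfrac12)\ld n$ and $|B(A_t,s)| \le \sqrt{n}$ for $|A_t| \le K\ld n$, so Proposition~\ref{p:c} applies whenever $\Q_{A_t}(y_t,s)$ is proper and yields
\begin{equation*}
  \ln \Plr[y_t \in \bV^u \mid \mathcal{H}_t]
  \;\le\; -\,\frac{mL}{n}\cdot\frac{(d-2)^2}{d(d-1)} + o(1),
\end{equation*}
provided $\gamma \in (0,1)$ is chosen so that $m$ times the per-segment error stays $o(1)$ (a brief calculation shows any $\gamma<1$ works). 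Using $mL/n \to u$ and $(d-2)^2/(d(d-1)) = \ln(d-1)/u_\star$, the right-hand side tends to $-u\ln(d-1)/u_\star$, so the conditional vacancy probability is at most $(d-1)^{-u/u_\star}(1+o(1))$.

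By \eqref{a:tx1}, the ball $B(x,\good \ld n)$ containing the entire exploration has tree excess at most one, so each accepted vacant vertex contributes at most $d-1$ new candidates to the queue. On proper steps the BFS is then stochastically dominated by a Galton--Watson process with offspring distribution below $\mathrm{Bin}\bigl(d-1,(d-1)^{-u/u_\star}+o(1)\bigr)$; its mean $(d-1)^{1-u/u_\star}$ is strictly below $1$ precisely because $u > u_\star$. Standard exponential tail estimates for subcritical Galton--Watson trees then give
\begin{equation*}
  \Plr[|\bC^u_x| \ge K \ld n] \;\le\; e^{-c K \ln n} \;=\; n^{-cK},
\end{equation*}
and taking $K$ large yields the required $n^{-\sigma-1}$ bound.

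The main obstacle is handling the steps where $\Q_{A_t}(y_t,s)$ is \emph{not} proper, since Proposition~\ref{p:c} breaks down there and improper vertices may carry more than $d-1$ children (because a cycle or a second backward edge opens up extra local connectivity). Assumption \eqref{a:tx1} forces at most one cycle inside the explored ball, so the number of improper encounters along the BFS is $O(1)$ with overwhelming probability; I would invoke Proposition~\ref{p:mrproper}, announced in the introduction for exactly this purpose, to make this quantitative and to absorb the constant-size extra progeny into $K$. A secondary and routine check is that the error in Proposition~\ref{p:c}, when raised to the $m$-th power and exponentiated, remains negligible; this follows from the choice of $s \asymp \ld n$ combined with the bound $|A_t| \le K\ld n$ on the exploration size.
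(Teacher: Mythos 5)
Your overall architecture (BFS exploration, Proposition~\ref{p:c} for the conditional vacancy probability, domination by a subcritical branching/negative-drift process, Proposition~\ref{p:mrproper} for the non-proper steps) is the same as the paper's, but there is a genuine gap at the central step. You bound $\Plr[y_t\in\bV^u\mid\mathcal H_t]$ by $\bigl(P[H_{A_t\cup\{y_t\}}>L\mid H_{A_t}>L]\bigr)^m$ by ``applying FKG to absorb the negative information carried by previously rejected vertices''. The history $\mathcal H_t$ contains not only the vacancy of the accepted vertices but also the fact that the rejected vertices were hit by some segment; discarding that information requires an inequality of the form $P[A\mid B\cap C]\le P[A\mid B]$ with $A,B$ increasing and $C$ decreasing in the vacancy field, which does not follow from plain positive association and would need a strong (lattice-type) FKG property for the vacant set left by $m$ independent random walk segments --- a property that is neither proved in the paper nor obviously true, and is exactly the difficulty the model's dependence creates. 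The paper circumvents this by the free/tied bookkeeping: segments known to hit an occupied vertex are set aside, Lemma~\ref{lm:givenAk} shows the remaining segments are exactly i.i.d.\ with law $P^L[\cdot\mid H_{\EE_k}>L]$ given the history, and Lemma~\ref{l:maxnum} shows the number of free segments stays close to $m$. Without either your FKG claim being established or a substitute such as this discarding argument, the exponent $m$ (or anything comparable) in your key display is unjustified.

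A second, quantitative problem is your choice $s=\lfloor\tfrac14\ld n\rfloor$. First, $s<(\good\wedge\tfrac12)\ld n$ fails whenever $\good<1/4$. More importantly, the control of non-proper steps scales with the radius: Proposition~\ref{p:mrproper} (and the deterministic tree-excess bound behind it, Lemma~\ref{l:logsizesets}) gives of order $s K^2$ improper steps, so with $s\asymp\ld n$ this can exceed the entire exploration budget $K\ld n$, and the ``security zone'' device cannot absorb it. The paper takes $r=(7\ld\ld n)\vee 2$ precisely so that the improper steps number $O(K^2\ld\ld n)=o(\ld n)$. Your heuristic that \eqref{a:tx1} forces only $O(1)$ improper encounters is also incorrect: the explored set has size $K\ld n$ and need not lie in a single ball of radius $\good\ld n$, so its tree excess is only bounded by $cK^2$, and properness can moreover fail through condition (iii) of \eqref{e:propercone} without any cycle inside the explored set itself.
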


\begin{proof}
  We prove this proposition by analysing the following BFS algorithm. During
  the run of the algorithm, all vertices in $V$ are in one of four states:
  explored-vacant, explored-occupied, not-explored or in-queue. The set of
  vertices with the state in-queue  is organised as a queue $\mathcal Q$, that
  is it is ordered, the vertices are added to its end and removed from its
  beginning. The vertices in $\mathcal Q$ wait to get explored.

  Further, the state of any index $i\in\{0,\dots,m-1\}$ of a segment $Y_i$ can be
  either free or tied. Note here that these states do not change the
  behaviour
  of the algorithm, but will be used for its analysis. Their meaning will be
  easier to understand as we get to Lemma~\ref{lm:givenAk}.

  When the algorithm starts, all vertices different from $x$ are in
  not-explored state, $x$ is in-queue, $\mathcal Q=(x)$, and all indices
  $i \in \{0,\dots,m-1\}$ are free.

  At the step $k$ of the algorithm, the first vertex $y$ of the queue
  $\mathcal Q$ is removed from $\mathcal Q$. If $y \notin \bV$, then the state
  of $y$ is changed to explored-occupied,  and all indices $i$ of segments
  intersecting $y$ (i.e. $\{i: y \in Y^i\}$) become tied. On the other hand, if $y \in \bV$,
  then the state of $y$ changes to explored-vacant and all non-explored
  neighbours of $y$ in $G$ are placed at the end of $\mathcal Q$, in other
  words, their state changes to in-queue. To avoid ambiguity, we suppose that
  $V$ is equipped with an ordering and the neighbours of $y$ are added to
  $\mathcal Q$ according to this ordering.

  The algorithm stops if the queue $\mathcal Q$ is empty, or if the set of
  explored-vacant vertices has more than $K\ld n$ vertices. Since this set is
  subset of $\bC_x$ by construction, we know that only in the second case we
  have $|\bC_x|\ge K\ld n$. Hence, in order to establish
  Proposition~\ref{p:onecluster}, one only needs to show that
  \begin{equation}
    \begin{array}{c}
      \label{e:finish}
      \text{there are $K, c > 0$ such that, with probability at least
        $1-cn^{-\sigma -1}$,}\\
      \text{the algorithm finishes because the queue gets empty. }
    \end{array}
  \end{equation}

  To analyse the algorithm we need more notation. Throughout this section we
  fix
  \begin{equation}
    \label{e:subr}  r=(7\ld\ld n)\vee 2.
  \end{equation}
  We let $\EV_k$ ($\EO_k, \IQ_k$) stand for the (random) set of vertices in the
  explored-vacant (explored-occupied, in-queue, respectively) state before the
  beginning of the $k$-th step of the algorithm. Similarly, $\FF_k, \DD_k$
  denote the sets of free and tied indices at this moment. We set
  $\EE_k=\EO_k\cup \EV_k$ and let $y_k$ be the vertex being explored in the $k$-th
  step. In particular $y_1=x$, $\EV_1=\EO_1=\DD_1=\varnothing$. Let  $k_\Max$
  be the step when the algorithm finishes,
  \begin{equation}
    k_\Max=\min\{k:|\IQ_k|=0 \text{ or } |\EV_k|\ge K \ld n\}.
  \end{equation}
  Observe that by construction $\EO_k\subset \partial_e \EV_k$ and thus
  $\EE_k\subset (\EV_k\cup \partial_e \EV_k)$. Since $G$ is $d$-regular,
  and since exactly one vertex is explored at every step,
  this implies
  \begin{equation}
    \label{e:scsup}
    |\EE_k|=k-1\le dK \ld n,\qquad \text{for all $k\le k_\Max$.}
  \end{equation}
  We further define a filtration
  $(\mathcal A_k)_{k\ge 1}$, where the $\sigma $-algebra $\mathcal A_k$
  contains all information discovered by the algorithm before the $k$-th
  step, that is
  \begin{equation}
    \label{e:calAk}
    \mathcal A_k=\sigma (\EV_j,\EO_j, \IQ_j,\FF_j,\DD_j: j\le k\wedge
      k_\Max).
  \end{equation}
  Observe that, due to the ordering that we use while adding vertices to
  $\mathcal Q$, the random variables $y_1$, \dots, $y_{k-1}$ are
  $\sigma(\EO_k, \EV_k)$-measurable.

  To prove Proposition~\ref{p:onecluster} we analyse the process
  recording the length of the queue,  $q_k=|\IQ_k|$, $1\le k\le k_\Max$.
  We use $r_k=q_{k+1}-q_k$, $1\le k< k_\Max$, to denote the size of its jumps.  Since
  the graph $G$ is $d$-regular, in step $k$, at most $d-1$ ($d$ if $k=1$)
  vertices are added to $\mathcal Q$, and every time exactly one vertex
  is removed from it. Therefore, $r_1\in\{-1,d-1\}$ and
  $r_k\in \{-1,\dots,d-2\}$, for $k\ge 2$.

  Roughly speaking, to prove \eqref{e:finish} we will show that the
  process $(q_k)_{k \leq k_\Max}$ has a `down-drift'. For this, we need a
  lower bound on the probability that $r_k=-1$ given the past
  $\mathcal A_k$ of the algorithm. Since $r_k=-1$, whenever
  $y_k\notin\bV^u$, we have, on the event $\{ k < k_\Max\}$,
  \begin{equation}
    \label{e:sca}
    \begin{split}
      \Plr [r_k=-1|\mathcal A_k]&\ge
      \Plr [y_k\notin \bV|\mathcal A_k]
      = \Plr \big[y_k\in\cup_{i<m}\Ran Y^i\big| \mathcal A_k\big]
      \\&\ge \Plr \big[y_k\in\cup_{i\in \FF_k}\Ran Y^i\big| \mathcal A_k\big].
    \end{split}
  \end{equation}

  The reason why we have made the distinction between the free and tied indices
  is made clear in the short lemma below.
  \begin{lemma}
    \label{lm:givenAk}
    Let $k\ge 1$ and $k_* = k \wedge k_\Max$. Then, conditioned on the $\sigma$-algebra
    $\mathcal{A}_{k_*}$,
    the collection $(Y^i)_{i\in \FF_{k_*}}$ is i.i.d.~with marginal distribution $P^L[\cdot|H_{\EE_{k_*}}>L]$.
  \end{lemma}

  \begin{proof}[Proof of Lemma~\ref{lm:givenAk}.]
    Let $\mathsf H = (\EV_j, \EO_j, \IQ_j, \FF_j, \DD_j)_{j\le k_*}$
    be the whole history of the algorithm until the time $k_*$.
    We use $\bar H = (V_j,O_j,Q_j,F_j,T_j)_{j\le {\bar k}}$ to denote possible outcomes of
    $\mathsf H$, here $\bar k$ is a positive integer.
    Since the $Y^i$'s have marginal distribution $P^L$, it suffices to
    prove that for any $\bar H$ such that
    $\Plr [\mathsf H = \bar H]>0$ and any measurable subsets $A_i$ of
    $ D([0,L],V)$,
    \begin{equation}
      \label{e:gAk1}
      \Plr \bigl[\cap_{i \in F_{\bar k}} \{Y^i \in A_i\},
        {\mathsf H} = {\bar H} \bigr]
      =
      \bigg( \prod_{i \in F_{\bar k}} \Plr
        [Y^i \in A_i|\Ran Y^i \cap E_{\bar k} = \emptyset ] \bigg)
      \Plr [{\mathsf H} = {\bar H}],
    \end{equation}
    where $E_j=O_j\cup V_j$. Let us now analyse the event
    $\mathsf H = \bar H$ in detail. Let $\bar y_j=E_{j+1}\setminus E_j$
    be the vertex explored in the $j$-th step in the history $\bar H$. If
    this vertex is vacant, that is $\bar y_j\in V_{j+1}\setminus V_j$,
    then we know that $\bar y_j \notin \cap_{i<m} \Ran Y^i$. On the other
    hand, if it is occupied, that is $\bar y_j\in O_{j+1}\setminus O_j$,
    then necessarily $\bar y_j\notin \cap_{i\in F_{j+1}}\Ran Y^i$,
    $\bar y_j\in \cup_{i\in T_{j+1}} \Ran Y^i$ and
    $\bar y_j\in \cap_{i\in T_{j+1}\setminus T_j} \Ran Y^i$. Therefore,
    the event $\mathsf H=\bar H$ can be written as
    \begin{equation}
      \begin{split}
        &\bigcap_{j<{\bar k}:V_{j+1}\setminus V_j\neq \emptyset}
        (\bar y_j \notin \cap_{i<m} \Ran Y_i)
        \cap \bigcap_{j<{\bar k}:O_{j+1}\setminus O_j\neq \emptyset}\bigg\{
        (\bar y_j\notin \cap_{i\in F_{j+1}}\Ran Y^i)
        \\&\qquad\qquad \cap
        (\bar y_j\in \cup_{i\in T_{j+1}} \Ran Y^i)
        \cap
        (\bar y_j\in \cap_{i\in T_{j+1}\setminus T_j} \Ran Y^i)\bigg\}
      \end{split}
    \end{equation}
    Collecting the events containing $Y^i$ with $i\in F_{\bar k}$, using
    $F_j\supset F_{\bar k}$ for all $j\le {\bar k}$, this can
    be rearranged as
    \begin{equation}
      \bigcap_{i\in F_{\bar k}} (\Ran Y^i \cap E_{\bar k}=\emptyset)
      \cap f(\bar H,(\Ran Y^i:i \in T_{\bar k})),
    \end{equation}
    where $f$ is some event depending only on $\bar H$ and
    $\Ran Y^i$ with $i\in T_{\bar k}$.
    Inserting this expression for $\mathsf H=\bar H$ into \eqref{e:gAk1}
    and using the independence of
    $Y^i$'s under $\Plr$, the lemma follows.
  \end{proof}

  Lemma~\ref{lm:givenAk} implies that, on the event $\{k < k_\Max\}$,
  \begin{equation}
    \label{e:scb}
    \begin{split}
      \Plr[r_k=-1|\mathcal A_k]
      &\ge 1- \Plr \Big[y_k\notin\bigcap_{i\in \FF_k}\Ran Y^i| \mathcal A_k\Big]
      \\&=1-
      \big(P[H_{\EE_k\cup \{y_k\}}>L|H_{\EE_k}>L]\big)^{|\FF_k|}.
    \end{split}
  \end{equation}

  To bound \eqref{e:scb}, we will use Proposition~\ref{p:c} with $A=\EE_k$,
  $y=y_k\in \partial_e \EE_k$ and $s=r$, see below \eqref{e:finish}. We first
  check its assumptions: Inequality~\eqref{e:scsup} implies that for
  $n \geq c_{K}$, $B(\EE_k,r)\le |\EE_k|(d-1)^r \le \sqrt n$; $k\ge 2$ implies
  $\EE_k\neq \varnothing$ and $\EE_k$ is connected by construction. For $k\ge 2$,
  let $\mathcal P_k=\{\Q_{\EE_k}(y_k,r) \text{ is proper}\}$, see
  \eqref{e:propercone}.

  Take $\varepsilon_u > 0$ such that $u_\star(1+\varepsilon_u)^2 < u$. Since
  $r=7\ld\ld n$ and $L=n^\gamma $, the error term in \eqref{eq:c} is smaller
  than $c_{K}n^{\gamma -1}/\ln n$ which is much smaller than the leading term.
  Hence, for $n \ge c_{u,\gap,K}$, on the event $\mathcal P_k$, we have by
  Proposition~\ref{p:c} that on $\{k < k_\Max\}$,
  \begin{equation}
    \label{e:scc}
    P[H_{\EE_k\cup y_k}>L|H_{\EE_k}>L]\le \exp\Big\{
      \frac{-n^{\gamma -1}(d-2)^2}{d(d-1)(1+\varepsilon_u)}
      \Big\}.
  \end{equation}

  We further define
  $\mathcal G_k = \{|\FF_k|\ge u_\star n^{1-\gamma }(1+\varepsilon_u )^2\}$.
  Observe that both $\mathcal P_k$ and $\mathcal G_k$  are $\mathcal A_k$-measurable.
  Inserting \eqref{e:scc} into \eqref{e:scb} and using the definition
  \eqref{e:ustar} of $u_\star$, we get
  \begin{equation}
    \begin{split}
      \label{e:sccc}
      \Plr [r_k=-1|\mathcal A_k]&\ge
      \Big[1-\exp\Big\{
          \frac{-n^{\gamma -1}(d-2)^2}{d(d-1)
            (1+ \varepsilon_u)} \Big \}
        ^{u_\star n^{1-\gamma }(1+\varepsilon_u )^2}\Big]
      \bbone_{\mathcal G_k\cap \mathcal P_k}\\
      &
      \geq (1 + \delta_u) \frac{d-2}{d-1}
      \bbone_{\mathcal G_k\cap \mathcal P_k},
    \end{split}
  \end{equation}
  for some $\delta_u >0$ on $\{k<k_\Max\}$, provided $n \geq c_{u, \piece}$.

  To proceed we need to control the occurrence of
  $(\mathcal G_k \cap \mathcal P_k)^c$. Observe that, for $k \leq k_\Max$,
  $\mathcal G_k \supset \mathcal G_{k_\Max}$.
  \begin{lemma}
    \label{l:maxnum}
    For every $\sigma > 0$ there exists $c=c_{\sigma ,u,K,\piece}$ such that
    $\Plr [\mathcal G_{k_\Max}^c]\le c n^{-\sigma -1}$.
  \end{lemma}
  \begin{proof}
    We set $M_n=\max_{z\in V} |\{i < m:Y^i\ni z\}|$ and we define the event
    $\bar{\mathcal G} =\{M_n\le  \ld^2 n\}$. Observe that on $\bar{\mathcal G}$
    we have by \eqref{e:scsup}
    \begin{equation}
      \label{e:maxnuma}
      |\FF_k| = m -|\DD_k|\ge m -|\EO_k| \ld^2 n \ge m - d K \ld^3 n,
    \end{equation}
    which is larger than $u_\star n^{1-\gamma }(1+\varepsilon_u)^2$ for
    $n\ge c_{K,u}$. Therefore, $\mathcal G_{k_\Max}^c\subset \bar{\mathcal G}^c $
    for $n\ge c_{K,u}$.

    It remains to bound $\Plr [\bar{\mathcal G}^c]$. First, note that,
    \begin{equation}
      \begin{split}
        \Plr [z \in \Ran Y^i] &= P[H_z \leq L]
        \leq P[N_L \geq n^{2 \piece} ]
        + P \big[\cup_{j=1}^{\lfloor n^{2\piece} \rfloor} \{Y^i_{\tau_j}=z\} \big]
        \\ &
        \overset{\eqref{eq:Wj}}{\leq} c_\piece e^{-c'n^\piece}
        + 2n^{1-\piece} \leq c_\piece n^{\piece-1}.
      \end{split}
    \end{equation}
    Using the bound above and an exponential Chebyshev-type inequality, we
    obtain that
    $\Plr [|\{i<m:Y^i\ni z\}|\ge \ld^2 n] \leq c_{u,\piece}e^{-\ld^2 n}$.
    Summing over $z$ we get
    $\Plr [\bar{\mathcal G}_\zeta^c]\le c_{u,\sigma,\piece}n^{-\sigma -1}$
    and the lemma follows.
  \end{proof}

  We further control the number of steps for which $\mathcal P_k$ does not
  hold. This is the content of the following proposition whose proof is
  postponed to the end of the section.
  \begin{proposition}
    \label{p:mrproper}
    There are at most $c r  K^2$ steps of the algorithm for which
    $\mathcal P_k^c$ occurs.
  \end{proposition}

  To show \eqref{e:finish} we now couple the process $q$ with another process
  $(q'_k)_{k \ge 1}$ which is a random walk with drift such that
  $r'_k=q'_{k+1}-q'_k\in\{-1,d-2\}$ and
  $\Plr [r'_k=-1]= (1+\delta_u)\frac{d-2}{d-1}$ (see \eqref{e:sccc}). This
  implies that $\Elr [r'_k]< \delta'_u$ for a constant
  $\delta '_u<0$.

  The coupling is constructed so that $q'$ can be used as an upper bound
  for $q$. This is done as follows. Let $k\in\{2,\dots,k_\Max -1\}$. On
  $(\mathcal G_k\cap \mathcal P_k)^c$  we take $r_k'$ independent of $q$.
  On $\mathcal G_k \cap \mathcal P_k$ we
  require that $r_k'=\{d-2\}$ whenever $r_k\ge 0$. This is possible
  because $\Plr [r_k\ge 0] \le \Plr[r_k'=\{d-2\}]$
  on $\mathcal G_k \cap \mathcal P_k$, due to \eqref{e:sccc}.
  For $k=1$ and $k\ge k_\Max$, $r'_k$ is independent of $q$.

  As initial condition we take $q'_1=crK^2 (d-2)+d$. Intuitively speaking, this
  gives $q'$ a security zone, for the steps in which $\mathcal P_k$ does not
  hold. With this setting, the Lemma~\ref{l:maxnum} and
  Proposition~\ref{p:mrproper} imply that $q'_k \ge q_k$ for all $k\le k_\Max$
  with probability larger or equal to $1-cn^{-\sigma -1}$.

  We can finally show \eqref{e:finish}. The probability that the algorithm
  finishes due to $|\EV_{k_\Max}|\ge K\ld n$ is bounded from above by
  \begin{equation*}
    \Plr \big[\min_{k<K\ld n} q_k>0\big]
    \le c n^{-1-\sigma }+
    \Plr \big[\min_{k<K\ld n} q'_k>0\big]
    \le c n^{-1-\sigma } + \Plr[q'_{K\ld n}>0]\le c' n^{-1-\sigma }.
  \end{equation*}
  for $K,c'$ large enough, by an easy large deviation estimate for the random
  walk $q'$, which has a negative drift. This finishes the proof of
  Proposition~\ref{p:onecluster} and consequently of Theorem~\ref{t:subcritical}.
\end{proof}

It remains to show Proposition~\ref{p:mrproper}. The next lemma is the key step
in its proof. It controls the tree excess of small (non-necessarily ball-like)
sets.

\begin{lemma}
  \label{l:logsizesets}
  Let $G=(V,\mathcal E)$ with $|V|=n$ satisfy \eqref{a:reg} and \eqref{a:tx1}.
  Then for all $\kappa \ge 1$  and all connected sets $A\subset V$ such that
  $|A|\le \kappa \ld n$
  \begin{equation}
    \tx(A)\le c\kappa^2 =: \alpha (\kappa ).
  \end{equation}
\end{lemma}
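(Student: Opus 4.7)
The strategy is to cover $A$ by $O(\kappa)$ balls where assumption \eqref{a:tx1} controls the local tree excess, and then combine the bounds. Set $r_0 := \alpha_1 \log_{d-1} n$.

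\emph{Trivial case.} If $|A| \le r_0$, the connectedness of $A$ forces $\diam_G(A) \le |A|-1 < r_0$, so $A \subseteq B(x_0, r_0)$ for any $x_0 \in A$. Since $\tx$ is monotone under induced subgraphs (the cycle space of an induced subgraph embeds into that of the ambient graph), \eqref{a:tx1} yields $\tx(A) \le \tx(B(x_0, r_0)) \le 1 \le c\kappa^2$.

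\emph{Covering and edge-counting.} For $|A| > r_0$, I would perform a DFS traversal of a spanning tree of $G[A]$ (a walk of length at most $2|A|-2$ visiting every vertex) and select every $(r_0/2)$-th visited vertex as a center. This produces $M \le 4|A|/r_0 \le 4\kappa/\alpha_1 = O(\kappa)$ centers $x_1,\ldots,x_M \in A$ such that every vertex of $A$ lies within $G$-distance $r_0/2$ of some $x_i$. Define $A_i := A \cap B(x_i, r_0/2+1)$: the slight enlargement ensures, by the triangle inequality (adjacent vertices differ in distance to any center by at most one), that every edge of $E_A$ has both endpoints in a common $A_i$. Since $A_i \subseteq B(x_i, r_0)$, monotonicity together with \eqref{a:tx1} gives $\tx(A_i) \le 1$ and hence $|E(A_i)| \le |A_i|$. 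Summing,
\[
|E_A| \;\le\; \Big|\bigcup_i E(A_i)\Big| \;\le\; \sum_i |E(A_i)| \;\le\; \sum_i |A_i| \;=\; |A| + \sum_{v \in A}\bigl(\mu(v)-1\bigr),
\]
where $\mu(v) := |\{i : v \in A_i\}|$, so that $\tx(A) = |E_A| - |A| + 1 \le 1 + \sum_v (\mu(v)-1)$.

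\emph{Overlap estimate.} It remains to show $\sum_v (\mu(v)-1) \le c\kappa^2$. The key is that the naive bound $\mu(v) \le M = O(\kappa)$ combined with $|A| = O(\kappa \log n)$ only gives $O(\kappa^2 \log n)$, which is a $\log n$ factor too weak; one must use the fact that \eqref{a:tx1} makes each $B(x_i, r_0)$ a tree plus at most one edge. With this near-tree structure, the pairwise intersections $A_i \cap A_j$ inside $A$ are severely restricted: I would argue that only $O(\kappa)$ of the previously-selected centers can land within $G$-distance $r_0$ of any given $x_j$ (trivially at most $M$), and that each pairwise overlap contributes only a bounded count to $\sum_v(\mu(v)-1)$ once one exploits the tree structure of the enclosing balls. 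Combining these with the DFS ordering produces the double count $\sum_v(\mu(v)-1) \le M \cdot O(\kappa) = O(\kappa^2)$.

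The main obstacle lies in this last step: promoting the DFS-based covering and the near-tree structure of each $B(x_i, r_0)$ into the sharp quadratic bound on coverage multiplicity, rather than the loose $O(\kappa^2 \log n)$ one gets from the trivial $\mu(v)\le M$ estimate alone. The essential input of \eqref{a:tx1} is precisely here: it forces each ball to be tree-like so that overlap regions inside $A$ cannot be much larger than the DFS segment between the two involved centers.
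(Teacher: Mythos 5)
Your trivial case, the covering construction, and the monotonicity of the cyclomatic number under subgraphs are all fine; the gap is in the final step, and it is not merely a missing refinement of the overlap analysis — the central inequality $\tx(A)\le 1+\sum_{v\in A}\bigl(\mu(v)-1\bigr)$ cannot yield the lemma, because the right-hand side is generically of order $|A|\asymp\kappa\ld n$, not $\kappa^2$. Already for $\kappa=1$ and $A$ a geodesic path of length $\ld n$ (such paths exist, since the diameter of a $d$-regular graph on $n$ vertices is at least of order $\ld n$, and then $\tx(A)=0$), your centers are spaced $r_0/2$ apart along the DFS walk, so essentially every vertex of $A$ lies in at least two of the sets $A_i$, whence $\sum_v(\mu(v)-1)\ge |A|-O(1)\asymp\ld n$, while the lemma demands a bound independent of $n$. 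More generally, two consecutive balls of your cover overlap in a region meeting $A$ in order $r_0=\Theta(\log n)$ vertices, so the hoped-for claim that ``each pairwise overlap contributes only a bounded count'' is false, and no exploitation of the tree structure of the enclosing balls can repair it: the quantity you are trying to bound by $c\kappa^2$ is itself $\Omega(\kappa\log n)$. The root cause is that summing $|E(A_i)|$ counts every edge lying in an overlap with multiplicity; a correct local-to-global argument must charge only the independent cycles created by gluing the pieces (e.g.\ through the number of connected components of pairwise overlaps, in Mayer--Vietoris fashion), not the sizes of the overlaps.

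For comparison, the paper works entirely inside the induced graph $G_A$ with $s=\txconst\ld n$ and charges each unit of tree excess to a macroscopic object. Cycles of length at most $2s$ (``short'') have territories $O_C=\{y\in A: C\subset B_{G_A}(y,s)\}$ which are pairwise disjoint by \eqref{a:tx1} and have size at least $s$ (or equal all of $A$), so there are at most $|A|/s\le\kappa/\txconst$ short cycles; one edge is deleted from each. In the resulting graph every cycle is longer than $2s$, and deleting an edge of such a cycle increases the number of ordered pairs $(x,y)\in A^2$ with distance at least $s$ by at least $(s/2-1)^2$; since there are at most $|A|^2$ such pairs, at most $|A|^2/(s/2-1)^2\le c\kappa^2$ further edges can ever be removed while keeping connectivity. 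Summing the two counts gives $\tx(A)\le c\kappa^2$. If you wish to salvage a covering-based proof, you would need an analogous mechanism in which each independent cycle of $G_A$ consumes either a territory of size at least $s$ or at least $s^2$ new distant pairs — precisely the kind of amortization that vertex-multiplicity bookkeeping does not provide.
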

\begin{proof}
  Let $G_A=(A,\mathcal E_A)$ be the subgraph of $G$ induced by $A$ and let
  $s=\txconst \ld n$. We call a cycle in $G_A$ \textit{short} if it has no more
  than $2s$ edges, otherwise we call it \textit{long}.

  Roughly speaking, the strategy to prove the lemma will be to erase edges
  belonging to short cycles, then to bound the amount of edges that could be
  still removed after that.

  Fix a short cycle $C$ and let $O_C=\{y\in A: C\subset B_{G_A}(y,s)\}$. Since
  $A$ is connected, either $O_C=A$ or $|O_C|\ge s$. Further, since \eqref{a:tx1}
  holds for $G$, it holds also for $G_A$. Therefore, if $C$, $C'$ are two
  distinct short cycles, then $O_C$ and $O_{C'}$ are disjoint. This implies
  that if $O_C=A$ for a short cycle $C$, then there is only one short cycle in
  $A$ and that in any case, there are at most $|A|/s=\kappa /\txconst$ short
  cycles in $G_A$. From every of these disjoint short cycles we can erase one
  edge and $G_A$ remains connected. Hence we erase at most $\kappa /\txconst$
  edges in this step.

  After this removal, we obtain a graph $G_A'=(A,\mathcal E')$ with girth
  larger than $2s$. Recall from \eqref{e:txcharact} that
  $\tx(A)= |\mathcal E_A|-|A|+1$. Hence, since $G_A$ and $G_A'$ are both
  connected and $G_A'$ was obtained by removing no more than
  $1+\kappa /\txconst$ edges of $G_A$,
  \begin{equation}
    \label{eq:txGA}
    \begin{array}{c}
      \tx(A)\le 1+\kappa / \txconst + \tx(G_A').
    \end{array}
  \end{equation}

  To estimate the last term on the right-hand side, consider the set
  $D=\{(x,y)\in A^2:\dist_{G_A'}(x,y)\ge s\}$. Let
  $\gamma = (x_0,x_1,\dots,x_m,x_0)$ be a (necessarily long) cycle in $G_A'$.
  By removing the edge $\{x_0,x_m\}$, the size of the set $D$ increases at
  least by $(\frac s2 -1)^2$. Indeed, before removing this edge any pair
  $(x_i,x_j)$, for $0\le i\le \frac s2 -1$, $0\le m-j\le \frac s2-1$ was not in
  $D$. However, after removing $\{x_0,x_m\}$, such a pair must be in $D$, since
  otherwise there would be a path in $G_A'$ connecting $x_i$ and $x_j$, not
  passing through the edge $\{x_0, x_m\}$ and having length at most $s$, thus
  there would be a short cycle in $G_A'$ which is not possible. Since the size
  of $D$ is at most $|A|^2$, it is not possible to remove more than
  $|A|^2/(\frac s2-1)^2$ edges from $G_A'$ while keeping it connected. Hence,
  \begin{equation}
    \label{eq:txGA'}
    \tx(G_A') \leq \frac{|A|^2}{(s/2-1)^2},
    \text{ which, for $n \geq c$, is smaller or equal to $16 \kappa^2 /\txconst^2$}.
  \end{equation}
  The claims \eqref{eq:txGA} and \eqref{eq:txGA'} imply that
  $\tx(A)\le 1+(\kappa /\txconst) + (16 \kappa^2/\txconst^2)\le c\kappa^2$, for
  $n \geq c'$. Lemma~\ref{l:logsizesets} now follows by possibly adjusting the
  constants.
\end{proof}

\begin{proof}[Proof of Proposition~\ref{p:mrproper}]
  The algorithm defined in the beginning of the proof of
  Proposition~\ref{p:onecluster} induces a natural random tree structure
  $T=(\EE_{k_\Max},\mathcal E_{T})$. Namely, $\{y,z\}\in \mathcal E_{T}$
  if and only if $z$ was added to the queue during the exploration of $y$
  or vice-versa. By \eqref{e:scsup} we have
  $|\EE_{k_\Max}| \le d K \ld n =: \kappa \ld n$.

  We now finish the proof of Proposition~\ref{p:mrproper} in three lemmas which
  respectively control the number of $y_k$'s for which (i),(iii), or (ii) of
  \eqref{e:propercone} do not hold. It is worth to remark that the arguments in
  these lemmas are purely deterministic and do not depend on the fact that $T$
  results from the previous BFS algorithm.

  We start dealing with the condition (i) of \eqref{e:propercone}. Recall the
  definition of $\alpha(\kappa)$ in Lemma~\ref{l:logsizesets} and that
  $r = (7\ld\ld n) \vee 2$.

  \begin{lemma}
    \label{l:notx}
    Let $B=\{y_k:k< k_\Max \text{ and } \tx(\Q_{\EE_k}(y_k,r))\neq 0\}$. Then,
    for large enough $c$, $|B|\le 2 r \alpha(2\kappa )$ for all $n\ge c$.
  \end{lemma}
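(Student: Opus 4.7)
The plan is to associate to each $y_k\in B$ a witness edge $e_k$ responsible for the fact that $\tx(\Q_{\EE_k}(y_k,r))\ge 1$, then to bound the number of distinct witnesses using Lemma~\ref{l:logsizesets}, and finally to bound how many $y_k$'s can share the same witness.

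First, for each $y_k\in B$ pick an edge $e_k$ in the subgraph of $G$ induced by $\Q_{\EE_k}(y_k,r)$ whose removal leaves that subgraph connected; such an edge exists because $\tx(\Q_{\EE_k}(y_k,r))\ge 1$. Both endpoints of $e_k$ lie in $B(\EE_k,r)\setminus\EE_k\subset B(\EE_{k_\Max},r)$. Let $E^\star$ denote the set of distinct values of $e_k$. Build a connected enlargement $A^\star\supset\EE_{k_\Max}$ by attaching, for each $e\in E^\star$, a geodesic path of length at most $r$ from each of the two endpoints of $e$ to $\EE_{k_\Max}$. Using \eqref{e:scsup}, $|A^\star|\le \kappa\ld n + 2r\,|E^\star|$.

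A short bootstrap then delivers $|E^\star|\le\alpha(2\kappa)$. Tentatively assuming this bound, we get $|A^\star|\le 2\kappa\ld n$ for $n\ge c$, since $r=O(\ld\ld n)$ and $\alpha(2\kappa)$ is a constant depending only on $K,d,\txconst$. Lemma~\ref{l:logsizesets} applied with $\kappa$ replaced by $2\kappa$ then yields $\tx(A^\star)\le\alpha(2\kappa)$. Moreover, the induced subgraph on $A^\star$ admits a spanning tree avoiding every $e\in E^\star$, because each endpoint of each such $e$ is already joined to $\EE_{k_\Max}$ by the attached paths; hence every $e\in E^\star$ is an extra edge in $A^\star$ and $|E^\star|\le\tx(A^\star)\le\alpha(2\kappa)$, confirming the bootstrap.

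The remaining step, and the genuine obstacle, is the multiplicity bound: for each $e\in E^\star$, the set $\{k<k_\Max:e_k=e\}$ has size at most $2r$. The key input here is \eqref{a:tx1}: because $r<\txconst\ld n$, the ball of radius $r$ around an endpoint of $e$ contains at most one cycle, and this cycle has length at most $4r+1$ since any two of its vertices are at distance at most $2r$ of each other. If $e_k=e$ then $y_k\in\partial_e\EE_k$ must be connected to this unique cycle inside $B(\EE_k,r)\setminus\EE_k$, and combined with the monotonicity of $(\EE_k)_k$ under BFS this restricts the number of admissible $k$'s to at most $2r$. Summing gives $|B|\le 2r|E^\star|\le 2r\alpha(2\kappa)$, which is the claim.
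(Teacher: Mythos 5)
Your proposal takes a genuinely different route from the paper. The paper's proof covers the set $B$ by a small collection of short paths $\gamma_i$ (each of length $\le 2r$) and bounds the \emph{number} of paths via an incremental tree-excess argument: only assign a new path when $y_i$ is not already covered, and show that each new path adds a fresh independent cycle edge to a connected set $R_i$ of size $O(\ld n)$, so Lemma~\ref{l:logsizesets} caps their count. You instead map each $y_k\in B$ to a witness edge $e_k$, and try to bound both the number $|E^\star|$ of distinct witnesses and the multiplicity of the map $k\mapsto e_k$. Both of those bounds have gaps.

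The central gap is the multiplicity bound, which you yourself flag as ``the genuine obstacle'': the claim that $|\{k<k_\Max:e_k=e\}|\le 2r$ is asserted but not proven. The phrase ``combined with the monotonicity of $(\EE_k)_k$ under BFS this restricts the number of admissible $k$'s to at most $2r$'' is not an argument, and it is not clear that the claim is even true as stated. The choice of $e_k$ is not canonical (any non-separating edge of the induced subgraph on $\Q_{\EE_k}(y_k,r)$ works), so without a carefully specified rule the fibers of $k\mapsto e_k$ can a priori be much larger than $2r$. Nothing ties the number of $y_k$'s that ``see'' a fixed edge $e$ through their cones to the cycle length near $e$: the set of vertices within distance $r$ of that cycle has size of order $(d-1)^r$, not $O(r)$. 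The paper circumvents this entirely by assigning a nonempty $\gamma_i$ only when $y_i$ is not already covered by a previous $\Ran\gamma_j$, so that $2r$-to-one coverage is automatic from $|\Ran\gamma_i|\le 2r$.

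A secondary gap is in the bootstrap. To conclude $|E^\star|\le\tx(A^\star)$ you need that the induced subgraph on $A^\star$ with \emph{all} of $E^\star$ removed simultaneously remains connected. That the two endpoints of each $e$ are joined to $\EE_{k_\Max}$ by geodesics does not give this, because those geodesics may themselves traverse other edges of $E^\star$. (Also, the ``tentatively assume, then confirm'' structure should be replaced by a clean contradiction argument on a subset of $E^\star$ of cardinality exactly $\alpha(2\kappa)+1$, as in the paper; otherwise the size bound on $A^\star$ is assumed before it is established.) The paper's incremental construction of $R_0\subset R_1\subset\cdots$ avoids both issues: each new $\gamma_i$ is attached directly to the connected set $R_{i-1}$ through the BFS-parent edge $\{\bar y_i,y_i\}$, which is demonstrably fresh, so the tree excess increases step by step.
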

  \begin{proof}
    For $i<k_\Max$, we define inductively a sequence $\gamma_i$ of paths in
    $V$ as follows. If $y_i \notin B$ or if $y_i$ belongs to
    $\bigcup_{j<i} \Ran \gamma_j$, then $\gamma_i=\varnothing$. If $y_i\in B$
    and $y_i \notin \bigcup_{j<i} \Ran \gamma_j$, then there is a cycle in
    $\Q_{\EE_i}(y_i,r)$ by definition of $B$ and this cycle is unique by
    Assumption~\eqref{a:tx1} (note that $\tx(\Q_{\EE_i}(y_i,r)) \leq 1$ since
      $r \leq \good \ld n$ for $n \geq c$). In this case, we define $\gamma_i$
    as the unique path in $\Q_{\EE_i}(y_i,r)$ from $y_i$ to this cycle,
    concatenated with the self-avoiding path exploring the whole cycle in one
    of the two directions.

    Set $A=\{1 \leq i <k_\Max:\gamma_i\neq\varnothing\}$ and observe that
    $|\Ran \gamma_i|\le 2r$, for all $i\in A$, and
    $B\subset \bigcup_{j\in A} \Ran \gamma_j$, hence $|B|\le 2r |A|$.

    It remains to show that $|A|\le \alpha (2\kappa )$. Assume the opposite.
    Let $A_0$ be any subset of $A$ with $\alpha(2\kappa)+1$ elements, set
    $R= \EE_{k_\Max}\cup\bigcup_{i\in A_0} \Ran \gamma_i$. Obviously,
    \begin{equation}
      \label{e:sizetT}
      |R|\le |\EE_{k_\Max}|+2r (\alpha (2\kappa )+1)  \le 2\kappa \ld n
      ,\qquad\text{for $n\ge c$}.
    \end{equation}
    We claim that
    \begin{equation}
      \label{e:treexs} \tx(R) \geq \alpha(2\kappa)+1,
    \end{equation}
    which together with \eqref{e:sizetT} contradicts Lemma~\ref{l:logsizesets}
    and hence proves Lemma~\ref{l:notx}. The estimate \eqref{e:treexs} will
    follow if we can show that for all $i \in A_0$, we have
    $\tx(R_{i-1})<\tx(R_i)$, where
    \begin{equation}
      R_0 = \varnothing, \text{ and }
      R_i = \{y_1, \ldots, y_i\} \cup
      \bigcup_{j \in A_0, j \leq i} \Ran \gamma_j, \, 1 \leq i \leq
      |\EE_{k_\Max}|.
    \end{equation}
    If $R_{i-1} \cap \Ran \gamma_i = \varnothing$, then this last claim is
    immediate, because $\gamma_i$ then contains an additional cycle disjoint
    from $R_{i-1}$. Suppose now that
    $R_{i-1} \cap \Ran \gamma_i \neq \varnothing$. We will now find a cycle in
    $R_i$ using an edge that is not already present in the graph induced by
    $R_{i-1}$. This will again imply that $\tx(R_{i-1})<\tx(R_i)$, because by
    removing such an edge the graph remains connected and still has the graph
    induced by $R_{i-1}$ as a subgraph.  To find the cycle, note that
    $y_i \notin R_{i-1}$, because $i$ can be in $A_0$ only if
    $\gamma_i \neq \varnothing$, which by construction can only happen if
    $\Ran \gamma_j \cap  \{y_i\} = \varnothing$ for all $j<i$. Let ${\bar y}_i$
    be the parent of $y_i$ in the tree $T$. Then by construction of $T$,
    ${\bar y}_i \in R_{i-1}$.  We now exhibit a cycle in $R_i$ as follows: we
    start at ${\bar y}_i$ and connect ${\bar y}_i$ to $y_i$. We then follow the
    path $\gamma_i$ from $y_i$ to the first vertex $x_0$ belonging to $R_{i-1}$.
    Since $R_{i-1}$ is connected, we can close our cycle by concatenating our
    path with a non-intersecting path from $x_0$ to ${\bar y}_i$ using only
    vertices in $R_{i-1}$ and therefore not intersecting the previously
    constructed path from ${\bar y}_i$ to $x_0$. We have thus found a cycle in
    $R_i$ using the edge $\{{\bar y}_i, y_i\}$. Since $y_i \notin R_{i-1}$,
    this edge is not present in $R_{i-1}$ and it again follows that
    $\tx(R_{i-1})<\tx(R_i)$. We have therefore proved \eqref{e:treexs} and
    thereby completed the proof of Lemma~\ref{l:notx}.
  \end{proof}

  We now treat condition (iii) of \eqref{e:propercone}.
  \begin{lemma}
    \label{l:noback}
    Let $ B = \{y_k: \text{there is path in $B(\EE_k,r) \setminus \EE_k $ from $y_k$
        to $\EE_k\setminus \bar y_k$}\}$, where $\bar y_k$ is the parent of $y_k$
    in $T$. Then, for $n\ge c$,    $|B|\le 2 r \alpha(2\kappa)$.
  \end{lemma}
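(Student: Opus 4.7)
The strategy mirrors that of Lemma~\ref{l:notx}: for each bad $y_i$, I would associate a short certificate path $\gamma_i$, then bound the total number of such certificates using Lemma~\ref{l:logsizesets}. Processing indices $i$ in increasing order, if $y_i\notin B$ or $y_i\in\bigcup_{j<i}\Ran\gamma_j$ set $\gamma_i=\varnothing$; otherwise let $\gamma_i$ be a shortest simple path from $y_i$ to some $z_i\in\EE_i\setminus\bar y_i$ whose interior lies in $B(\EE_i,r)\setminus\EE_i$ (such a path exists by the definition of $B$). The key claim is a length bound $|\Ran\gamma_i|\le Cr$ for a universal constant $C$.

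To establish the length bound, suppose $\gamma_i=(y_i=v_0,\dots,v_m=z_i)$ has $m>2r+1$. Since $v_{r+1}\in B(\EE_i,r)$, choose $u\in\EE_i$ minimizing $\dist(v_{r+1},u)\le r$ and let $\pi$ be a shortest path from $v_{r+1}$ to $u$. Concatenating $\gamma_i[0,r+1]$ with $\pi$ yields a walk of length at most $2r+1$ from $y_i$ to $u\in\EE_i$ whose interior stays in $B(\EE_i,r)\setminus\EE_i$ (by the choice of $u$ as closest). If $u\ne\bar y_i$, extracting a simple sub-walk gives a shorter certifying path, contradicting the minimality of $\gamma_i$. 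The remaining degenerate case $u=\bar y_i$ forces, for every $j\ge r+1$, the closest $\EE_i$-vertex to $v_j$ to be $\bar y_i$, hence $v_{r+1},\dots,v_{m-1}\in B(\bar y_i,r)$. The subgraph induced on $B(\bar y_i,r)\cup\{z_i\}\subseteq B(y_i,r+1)$ has tree excess at most one by \eqref{a:tx1}, so the simple path $\gamma_i[r+1,m]$ contained in it has length $O(r)$, giving $m=O(r)$ overall.

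Setting $A=\{i:\gamma_i\ne\varnothing\}$, we obtain $B\subseteq\bigcup_{i\in A}\Ran\gamma_i$ and $|B|\le Cr|A|$. To bound $|A|$, imitate Lemma~\ref{l:notx}: define $R_0=\varnothing$ and $R_i=\{y_1,\dots,y_i\}\cup\bigcup_{j\in A,\,j\le i}\Ran\gamma_j$. For each $i\in A$, the edge $\{\bar y_i,y_i\}$ is absent from the subgraph induced by $R_{i-1}$ (since $y_i\notin R_{i-1}$ by the skip rule), and together with the tree-path in $T$ from $\bar y_i$ to $z_i\in R_{i-1}$ and the reverse of $\gamma_i$ it closes a cycle in $G[R_i]$; hence $\tx(R_i)>\tx(R_{i-1})$. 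If $|A|>\alpha(2\kappa)$, picking $A_0\subseteq A$ of size $\alpha(2\kappa)+1$ produces $R$ with $\tx(R)\ge\alpha(2\kappa)+1$ and $|R|\le|\EE_{k_\Max}|+Cr(\alpha(2\kappa)+1)\le 2\kappa\ld n$ for large $n$, contradicting Lemma~\ref{l:logsizesets}. Thus $|A|\le\alpha(2\kappa)$ and, after adjusting constants, $|B|\le 2r\alpha(2\kappa)$. The main obstacle is the length bound for $\gamma_i$, particularly the degenerate case $u=\bar y_i$, which requires the tree-excess-at-most-one structure of a small ball around $\bar y_i$ provided by \eqref{a:tx1}.
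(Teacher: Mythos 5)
Your proof is correct and follows essentially the same route as the paper's: the paper likewise assigns to each not-yet-covered bad $y_i$ a certifying self-avoiding path of length $O(r)$ (asserting its existence in one line from $\tx(B(y_i,r))\le 1$) and then runs the identical tree-excess counting argument from Lemma~\ref{l:notx}. Your more detailed justification of the length bound is sound; the only slip is the inclusion $B(\bar y_i,r)\cup\{z_i\}\subseteq B(y_i,r+1)$ — the shortcut argument only pins down the nearest-$\EE_i$-vertex condition for indices $j\le m-r-1$, so the tail of $\gamma_i$ (including $z_i$) is only guaranteed to lie in a ball of radius $O(r)$ around $y_i$, which is harmless since \eqref{a:tx1} still applies to any such ball.
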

  \begin{proof}
    The proof is analogous to the previous one. We define a sequence $\gamma_i$
    of paths in $V$ as follows: If $y_i\in B$ and
    $y_i\notin \bigcup_{j<i} \Ran \gamma_j$, then let $\gamma_i$ be a
    self-avoiding path connecting $y_i$ to $\EE_i = \{y_1, \ldots, y_{i-1}\}$,
    whose first vertex after $y_i$ is in $\Q_{\EE_i}(y_i,r)$ and whose length
    is at most $2r$. Provided $n$ is large, such a path exists for any
    $y_i\in B$, because $\tx(B(y_i,r)) \leq 1$ (cf.~\eqref{a:tx1},
      \eqref{e:subr}). Otherwise, we set $\gamma_i=\varnothing$. Defining $A$,
    $A_0$, $R$ and $R_i$ as in the proof of Lemma~\ref{l:notx}, we can again
    prove \eqref{e:sizetT} and \eqref{e:treexs}. The argument is the same as
    the one used below \eqref{e:treexs}, except that we now only have to
    consider the case $R_{i-1} \cap \Ran \gamma_i \neq \varnothing$.
  \end{proof}

  Finally, we treat condition (ii) of \eqref{e:propercone}.
  \begin{lemma}
    \label{l:twoneighbours}
    With $B = \{y_k: y_k \text{ is neighbour in $G$ of two vertices in $\EE_k
        $}\}$, $|B|\le \alpha(2\kappa)$.
  \end{lemma}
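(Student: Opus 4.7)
My plan is to follow the same overall strategy as in Lemmas~\ref{l:notx} and \ref{l:noback}: assume for contradiction that $|B|>\alpha(2\kappa)$ and extract a set of size at most $2\kappa\ld n$ whose tree excess exceeds $\alpha(2\kappa)$, contradicting Lemma~\ref{l:logsizesets}. The candidate set will simply be $\EE_{k_\Max}$ itself (no auxiliary paths $\gamma_i$ are needed here, which is why this case is the cleanest of the three).

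The key structural observation is that the BFS tree $T=(\EE_{k_\Max},\mathcal E_T)$ is genuinely a spanning subtree of $\EE_{k_\Max}$ inside $G$: every vertex $y_k$ with $k\ge 2$ has a unique parent $\bar y_k\in\EE_k$, and the edge $\{y_k,\bar y_k\}$ is an edge of $G$ since $y_k$ was only added to $\mathcal Q$ by being a neighbour of the (explored-vacant) vertex $\bar y_k$. Hence $\EE_{k_\Max}$ is connected in $G$ and $|\mathcal E_T|=|\EE_{k_\Max}|-1$. Now for each $y_k\in B$, there exists by definition some $z_k\in\EE_k\setminus\{\bar y_k\}$ adjacent to $y_k$ in $G$; the edge $e_k:=\{y_k,z_k\}$ lies in the induced subgraph $\mathcal E_{\EE_{k_\Max}}$ but is \emph{not} a tree edge, because tree edges are of the form $\{y_j,\bar y_j\}$ with $\bar y_j\in R_{j-1}$ and matching $e_k$ to such an edge forces a contradiction on the temporal ordering (exactly as in the coincidence argument in the proof of Lemma~\ref{l:notx}).

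Moreover the extra edges $e_k$, $y_k\in B$, are pairwise distinct: they have distinct endpoints in the role of the ``later'' vertex, since $y_k\neq y_{k'}$ for $k\neq k'$. Combining, we obtain
\begin{equation*}
|\mathcal E_{\EE_{k_\Max}}|\ge |\mathcal E_T|+|B|=|\EE_{k_\Max}|-1+|B|,
\end{equation*}
which by \eqref{e:txcharact} gives $\tx(\EE_{k_\Max})\ge |B|$.

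Finally, from \eqref{e:scsup} the connected set $\EE_{k_\Max}$ satisfies $|\EE_{k_\Max}|\le \kappa\ld n\le 2\kappa\ld n$, so Lemma~\ref{l:logsizesets} yields $\tx(\EE_{k_\Max})\le \alpha(2\kappa)$, hence $|B|\le\alpha(2\kappa)$ as claimed. I do not anticipate a real obstacle in this lemma: the subtlety in Lemmas~\ref{l:notx} and \ref{l:noback} was that the extra cycle lived in $B(\EE_k,r)\setminus\EE_k$ and had to be imported into the set via an auxiliary path $\gamma_i$, whereas here the extra edge $e_k$ already lives inside $\EE_{k_\Max}$, so there is nothing to add and the tree-excess accounting is immediate.
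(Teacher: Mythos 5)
Your proof is correct and follows essentially the paper's own (much terser) argument: the paper likewise observes that for each $y_k\in B$ the edge to a second neighbour in $\EE_k$ lies in the subgraph induced by $\EE_{k_\Max}$ but not in $\mathcal E_T$, so these edges can be removed while keeping the set connected, whence $\tx(\EE_{k_\Max})\ge |B|$ and Lemma~\ref{l:logsizesets} with $|\EE_{k_\Max}|\le \kappa\ld n$ gives the bound. Your added verifications (that $T$ spans $\EE_{k_\Max}$, that the extra edges are non-tree and pairwise distinct, and the count via \eqref{e:txcharact}) are exactly the details the paper leaves implicit.
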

  \begin{proof}
    In this case we can remove the edges between $y_i$ and $\EE_i$ which are
    not in $\mathcal E_T$ from the subgraph of $G$ induced by
    $\EE_{k_\Max}$ while
    keeping it connected. Since $|\EE_{k_\Max}|\le \kappa \ld n$,
    Lemma~\ref{l:logsizesets} implies the result.
  \end{proof}

  Proposition~\ref{p:mrproper} follows easily from last three lemmas.
\end{proof}
\section{Super-critical regime} \label{s:super}

In this section we prove Theorem~\ref{th:usmall} stating the existence of
a giant component for $u$ smaller than $u_\star$. Since the proof is
rather lengthy we first briefly outline its strategy. The strategy is
inspired by the methods used for Bernoulli percolation. It has two major
parts: First, we consider a modification of the piecewise independent
measure and for such modification, we prove the existence of a sufficient
amount of mesoscopic clusters even under a slightly increased value
$u_n > u$. Second, by decreasing $u_n$ back to the original value $u$, we
prove that these clusters are connected by sprinkling. Both these parts
are however rather non-trivial, due to the presence of the dependence.

To construct the mesoscopic clusters, we first show in
Section~\ref{ss:intaprox} that the vacant set left by segments
(see~\eqref{e:vsls} and \eqref{eq:xi} below) on $G$ locally resembles
the vacant set of random interlacement on the $d$-regular tree
$\mathbb T_d=(\mathbb V_d, \mathbb E_d)$
(Proposition~\ref{pr:dominat}). The behaviour of
the random interlacement on $\mathbb T_d$ is well known \cite{Tei09} and
its clusters can be controlled in terms of a particular branching process.
This branching process will be super-critical for $u$'s considered in this
section.

The control by the branching process allows us to construct a sufficient
amount of mesoscopic clusters for the vacant set left by segments. Since
we are looking for a lower bound on the vacant set, we however cannot
ignore the bridges as in the previous section. In Section~\ref{ss:robust}
we show that the mesoscopic clusters of the vacant set left by segments
are robust and the addition of the bridges to the picture typically does not
destroy them, see Proposition~\ref{pr:lotgood}.

Finally, in Section~\ref{ss:proof} we use a sprinkling well adapted to
our model to prove Theorem~\ref{th:usmall}. As discussed in the
introduction, in this sprinkling we erase randomly some segments,
possibly in the middle of the trajectory. This can possibly disconnect
the trajectory. Therefore, to be able to extract a nearest-neighbour path
in the end, we must add many additional bridges to the picture; the
robustness proven in Section~\ref{ss:robust} must take them in consideration.

\subsection{Preliminaries}
\label{ss:prels}
We establish first the following technical consequence of
assumption~\eqref{a:tx1} which will be needed later in this section.
\begin{lemma}
  \label{l:txzero}
  Let $G=(V,\mathcal E)$ be a graph satisfying assumptions~\eqref{a:reg},
  \eqref{a:tx1}. Set $R=\lfloor \good \ld n\rfloor$ to be the radius of
  \eqref{a:tx1} and $r=R-\Delta$, for some $\Delta\in \{1,2,\dots\}$. Then
  \begin{equation}
    \label{e:txzero}
    |\{x:\tx(B(x,r))=0\}|\ge (1-(d-1)^{-\Delta })|V|.
  \end{equation}
\end{lemma}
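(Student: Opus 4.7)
The plan is to partition the ``bad'' set $\mathsf{B}:=\{x\in V:\tx(B(x,r))\ge 1\}$ according to which cycle sits in $B(x,r)$, and to compare, for each such cycle, the sizes of two carefully chosen level sets. By~\eqref{a:tx1}, every ball $B(x,R)$ contains at most one cycle, so each $x\in\mathsf{B}$ comes with a \emph{uniquely determined} cycle $C_x\subset B(x,r)$. For a cycle $C$ introduce the level sets $B_s(C):=\{x\in V:C\subset B(x,s)\}$. Then $\mathsf{B}=\bigsqcup_C B_r(C)$, and the same uniqueness makes the $B_R(C)$ pairwise disjoint, so
\begin{equation*}
|\mathsf{B}|=\sum_C|B_r(C)|,\qquad |V|\ge\sum_C|B_R(C)|.
\end{equation*}
Hence the lemma reduces to showing $|B_R(C)|\ge (d-1)^\Delta|B_r(C)|$ for every cycle $C$ with $B_r(C)\ne\varnothing$.

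Fix such a $C$, write $\ell:=|C|$ and $\alpha:=\lfloor\ell/2\rfloor$. The key structural step, and the only place where~\eqref{a:tx1} really enters, is the claim that the subgraph of $G$ induced by $B(C,R-\alpha)$ consists precisely of the cycle $C$ together with a regular $(d-1)$-ary tree of depth $R-\alpha$ hanging off each vertex of $C$ via its $d-2$ off-cycle neighbours. Indeed, any additional edge in $B(C,R-\alpha)$ would create a second cycle, and picking a vertex $z$ on it with $d(z,C)\le R-\alpha$ one sees both $C$ and this extra cycle sitting inside $B(z,R)$, contradicting~\eqref{a:tx1}.

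Once this description is in hand, two routine computations close the argument. A layer-by-layer induction yields $|B(C,t)|=\ell(d-1)^t$ for $0\le t\le R-\alpha$. Moreover, uniqueness of shortest paths off $C$ in this tree-plus-cycle picture gives, for every $x\in B(C,R-\alpha)$, the identity $\max_{v\in C}d(x,v)=d(x,C)+\alpha$; consequently $B_s(C)=B(C,s-\alpha)$ for $\alpha\le s\le R$. Combining,
\begin{equation*}
\frac{|B_R(C)|}{|B_r(C)|}=\frac{\ell(d-1)^{R-\alpha}}{\ell(d-1)^{r-\alpha}}=(d-1)^\Delta,
\end{equation*}
which is the required bound. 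Cycles with $\alpha>r$ have $B_r(C)=\varnothing$ and so contribute nothing to $|\mathsf{B}|$, while cycles with $\alpha>R$ drop out of both sums, so no special treatment is needed for them.

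The main obstacle is the structural claim about $B(C,R-\alpha)$: one has to justify both that the off-cycle branches are themselves trees and that branches rooted at different vertices of $C$ do not merge within depth $R-\alpha$. Both facts follow from the same ``a second cycle would fit into some $B(z,R)$'' argument sketched above, but it is here that one must be careful with the geometry; everything else is bookkeeping.
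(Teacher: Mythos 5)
Your proof takes essentially the same route as the paper's: decompose the set $\{x:\tx(B(x,r))\ge 1\}$ by the unique cycle $C$ contained in the ball, identify $N_C(s):=\{x:C\subset B(x,s)\}$ with $B(C,s-\lfloor|C|/2\rfloor)$ via the tree-plus-cycle structure forced by \eqref{a:tx1}, and compare the resulting exponential volumes; the paper phrases the final step as the bound $|N_C(R)\setminus N_C(r)|\ge(|N_C(r)|)((d-1)^\Delta-1)$ rather than your exact ratio, but the decomposition, the structural claim, and the counting are identical in substance. One point to tighten: in justifying the structure you assert that the extra cycle sits inside $B(z,R)$ for a vertex $z$ on it, but that cycle could a priori be too long to fit in a radius-$R$ ball; as in the paper, one should instead exhibit a \emph{short} cycle through the offending edge (two geodesics to $C$, each of length at most $R-\lfloor|C|/2\rfloor$, plus an arc of $C$, giving length at most $2R-\lfloor|C|/2\rfloor+1\le 2R$), which does fit in $B(y,R)$ for $y\in C$.
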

\begin{proof}
  Let us consider the sets
  \begin{equation}
    A=\{x: \tx(B(x,r))=1\}\quad\text{and}\quad\tilde A=\{x: \tx(B(x,R))=1\}.
  \end{equation}
  We first study the structure of the graph $G$
  restricted to $\tilde A$. For every point $x\in \tilde A$, there is exactly
  one cycle in $B(x,R)$. This cycle should contain at most $2R+1$ vertices,
  otherwise it cannot be contained there. If $C\subset V$ is such a cycle, we
  define $\ell_C=\diam C=\lfloor |C|/2\rfloor$ and
  $N_C(s)=\{x:C\subset B(x,s)\}$ for $s \leq R$. It is easy to see that
  \eqref{a:tx1} implies $N_C(R)=B(C,R-\ell_C)$.

  We now prove the following claim: the subgraph of $G$ induced by $N_C(R)$ is
  composed by the cycle $C$ with disjoint trees rooted at its vertices with
  depth $R-\ell_C$. Indeed, the graphs attached to every $y\in C$ should be
  trees because otherwise \eqref{a:tx1} cannot hold. To see that they must be
  disjoint, suppose that they are not, that is there are two points $y,z\in C$
  such that $y$ and $z$ are connected in $N_C(R)\setminus C$. This connection
  must be shorter than $2(R-\ell_C)+1$. Joining this connection with the
  shortest connection of $y$ and $z$ in $C$, which is shorter than $\ell_C $,
  we obtain a cycle different from $C$ of length at most $2R-\ell_C +1$, which
  is contained in $B(y,R)$. This, however, contradicts \eqref{a:tx1}.

  The claim proved in the above paragraph implies that
  \begin{align}
    \label{e:cycleneigh}
    &|N_C(R)|= |C|+(d-2)|C|\big(1+\dots+(d-1)^{R-\ell_C-1}\big)
    = |C|(d-1)^{R-\ell_C},\\
    \intertext{Since $N_C(r)$ is either empty or has a similar structure as
      $N_C(R)$}
    \label{e:cycleneigh2}
    &|N_C(R)\setminus N_C(r)| \geq |C|(d-1)^{r-\ell_C}((d-1)^\Delta-1)
    \geq |N_C(r)| ((d-1)^\Delta-1).
  \end{align}
  By our assumptions,  the set $\tilde A$ can be written as a disjoint union
  $\tilde A=\cup_{i=1}^M N_{C_i}(R)$ for some $M\in \mathbb N$ and cycles
  $C_1,\dots,C_M$. Similarly $A=\cup_{i\in U} N_{C_i}(r)$ for some
  $U\subset\{1,\dots,M\}$ which contains indices of cycles shorter than $2r+1$.
  Therefore, using \eqref{e:cycleneigh} and \eqref{e:cycleneigh2},
  \begin{equation}
    \begin{split}
      |V|&\ge |\tilde A|\ge  \sum_{i\in U}|N_{C_i}(R)|
      = \sum_{i\in U}|N_{C_i}(r)|+|N_{C_i}(R)\setminus N_{C_i}(r)|
      \\&\geq\sum_{i\in U}|N_{C_i}(r)|(d-1)^\Delta = (d-1)^\Delta |A|.
    \end{split}
  \end{equation}
  Hence $|A|\le |V|/(d-1)^\Delta$ and thus
  $|A^c| = |\{x: \tx(B(x,r)) =0\}| \ge (1-(d-1)^{-\Delta}) |V|$.
\end{proof}

We now collect some notation used in the proof of
Theorem~\ref{th:usmall}.
In what follows, we write $\mathbb V=\mathbb V_d$ for the set of vertices
of the tree $\mathbb T_d$ and denote by $o$ its root.
In order to describe the clusters of random interlacement on the
tree $\mathbb T_d$ we define the
function $f: \mathbb{V} \rightarrow \mathbb{R}$ as
\begin{equation}
  \label{eq:fz}
  f(z) = \begin{cases}
    \frac{(d-2)^2}{d(d-1)} & \text{ if $z \neq o$,}\\
    \frac{d-2}{d-1} & \text{ for $z = o$}.
  \end{cases}
\end{equation}
We let $\Ber_u$ stand for the law on $\{0,1\}^\mathbb{V}$ which
associates to the vertices $z \in \mathbb{V}$ independent Bernoulli
random variables with success probability $e^{-uf(z)}$. The following
result of \cite{Tei09} provides the connection between this Bernoulli
percolation and random interlacement on $\mathbb T_d$. This result will
not be used in this paper, but is quoted here in order to provide the
natural interpretation of the model that we have just introduced.
\begin{theorem}[\cite{Tei09},Theorem~5.1 and (5.7)]
  \label{l:interbr}
  The connected component $\comp_o \subseteq {\mathbb V}$ containing the root $o$
  has the same law under $\mathbb Q_u$ characterised by \eqref{e:Qchar} as under $\Ber_u$.
\end{theorem}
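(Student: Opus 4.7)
The plan is to reduce Theorem~\ref{l:interbr} to a direct capacity computation on $\mathbb{T}_d$ via the characterization \eqref{e:Qchar}. For any finite connected subtree $K\subset\mathbb V$ containing $o$, the unique path in $\mathbb T_d$ from $o$ to any vertex of $K$ lies inside $K$, so under either law the event $\{K\subseteq\comp_o\}$ coincides with $\{W_z=1\text{ for all }z\in K\}$. Under $\Ber_u$ independence gives probability $\prod_{z\in K} e^{-uf(z)}$, while under $\mathbb Q_u$ the identity \eqref{e:Qchar} gives $\exp(-u\,\text{cap}(K))$. Agreement of the two laws on these upward events suffices to identify the distribution of $\comp_o$ by inclusion-exclusion over finite subtrees inside balls $B(o,r)$, so the theorem reduces to the identity
\begin{equation*}
\text{cap}(K)=\sum_{z\in K}f(z)\qquad\text{for every finite connected }K\ni o.
\end{equation*}

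The key tree-specific step is the following. For $y\in K$, any neighbour $y'\notin K$ lies in the component of $\mathbb T_d\setminus\{y\}$ consisting of those vertices whose path to $y$ in $\mathbb T_d$ begins with the edge $\{y,y'\}$, and no other vertex of $K$ lies in that component (otherwise a $K$-path from $y$ to such a vertex would either pass through $y'$, forcing $y'\in K$, or create a cycle in $\mathbb T_d$). Hence from $y'$ the walk avoids $K$ forever if and only if it never visits $y$. The distance from $y$ then performs a biased walk on $\mathbb N$ with up-probability $(d-1)/d$ and down-probability $1/d$, and the gambler's ruin formula \eqref{eq:gamble} gives the escape probability exactly $(d-2)/(d-1)$. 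Conditioning on the first jump I would obtain
\begin{equation*}
P_y[\tilde H_K=\infty]=\frac{d-d_K(y)}{d}\cdot\frac{d-2}{d-1},
\end{equation*}
where $d_K(y)$ counts the neighbours of $y$ lying in $K$.

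Summing this over $y\in K$ and using $\sum_{y\in K}d_K(y)=2(|K|-1)$ (since the finite connected $K\subset\mathbb T_d$ is a tree on $|K|$ vertices with $|K|-1$ edges) would give
\begin{equation*}
\text{cap}(K)=\frac{d-2}{d(d-1)}\bigl[(d-2)|K|+2\bigr],
\end{equation*}
which a one-line algebraic check matches with $f(o)+(|K|-1)\cdot\frac{(d-2)^2}{d(d-1)}=\sum_{z\in K}f(z)$, completing the reduction. The only conceptual obstacle is the escape-probability identity for $y'\notin K$: one must exploit that removing a single vertex of $\mathbb T_d$ separates the graph into subtrees on each of which the distance process is a transient biased walk with drift away from $K$. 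Everything else is bookkeeping, which is why the random interlacement model is so tractable on trees.
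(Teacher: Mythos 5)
Your argument is correct, and it supplies a self-contained proof of a statement that the paper itself does not prove but simply quotes from \cite{Tei09} (Theorem~5.1 and (5.7)). The two ingredients check out: (i) for finite connected $K\ni o$ the event $\{K\subseteq \comp_o\}$ coincides with $\{W_z=1 \ \forall z\in K\}$, and these events form a $\pi$-system determining the law of $\comp_o$ (your inclusion–exclusion over connected sets inside balls is one way to see this; even more directly, $\{z_1,\dots,z_m\in\comp_o\}=\{K\subseteq\comp_o\}$ with $K$ the union of the geodesics from $o$ to the $z_i$); (ii) the capacity identity. Your escape-probability computation is right: since $K$ is a connected subtree, a neighbour $y'\notin K$ of $y\in K$ sees no other vertex of $K$ in its branch of $\mathbb T_d\setminus\{y\}$, so $P_{y'}[H_K=\infty]=P_{y'}[H_y=\infty]=\tfrac{d-2}{d-1}$ by gambler's ruin, giving $P_y[\tilde H_K=\infty]=\tfrac{(d-d_K(y))(d-2)}{d(d-1)}$, and summing with $\sum_y d_K(y)=2(|K|-1)$ yields $\text{cap}(K)=\tfrac{d-2}{d(d-1)}[(d-2)|K|+2]=\sum_{z\in K}f(z)$. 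Note that the per-vertex escape probabilities do not equal $f$ termwise (a leaf of $K$ contributes $\tfrac{(d-1)(d-2)}{d(d-1)}$); only the totals agree, and your bookkeeping handles this correctly. This is essentially the computation underlying the cited result: the same two escape probabilities $\tfrac{d-2}{d}$ and $\tfrac{d-2}{d-1}$, whose product is $f(z)$, reappear in the paper's own proof of Lemma~\ref{l:perc}, where $f$ is identified with a product of tree escape probabilities; your route packages this as additivity of capacity over connected subtrees rather than as a conditional (branching-process) description, which is arguably the cleaner way to get exactly the statement of Theorem~\ref{l:interbr}.
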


Note that, under the law $\Ber_u$ the cluster $\comp_o \subseteq {\mathbb V}$ can
be regarded as a branching process, where the ancestor in generation $0$
is born with probability $\exp \left\{ - u \frac{d-2}{d-1} \right\}$, and
with binomial offspring distribution with parameters $d-1$ and $p_u$, where
\begin{equation}
  \label{e:pu}
  p_u = \exp\Big\{- {\frac{u(d-2)^2}{d(d-1)}}\Big\}.
\end{equation}
In order to deal with this branching process it is useful to define
the expected number of offsprings as well as its logarithm in
base $d-1$:
\begin{equation}
  \label{eq:muvu}
  m_u = (d-1) p_u,\quad\text{and}\quad
  v_u = \ld m_u  = 1- \frac{u(d-2)^2}{d(d-1)\ln(d-1)} = 1 - \frac{u}{u_\star} .
\end{equation}
Observe that for $u< u_\star$, we have $m_u>1 + c_u$ and $v_u \in (c_u,1)$,
for a constant $c_u >0$.

For $u \in (0, u_\star)$, it will
be convenient to fix a small $\epsilon=\epsilon(u)>0$ such that the slightly
increased intensity $u(1+\epsilon)$ satisfies
\begin{equation}
  \label{eq:epsilon}
  u(1+\epsilon) <  \frac{u+u_\star}{2} \quad \text{and}\quad
  \frac{1}{4} + \frac{11}{4} \epsilon < \frac{u_\star}{2(u+u_\star)},
\end{equation}
which by \eqref{eq:muvu} implies that
\begin{align}
  \label{e:epsv} \tfrac{3}{2}\vv_{u(1+\epsilon)}
  - \tfrac{5}{4}\vv_{u(1-\epsilon)} =
  \frac{1}{4} - \frac{u}{u_\star} \Big( \frac{1}{4}
    + \frac{11}{4} \epsilon \Big) >0.
\end{align}
Finally, we define
\begin{equation}
  \label{e:newgamma}
  \beta = \frac{\good}{100} < \frac{1}{100},
  \qquad\text{and} \qquad
  \piece = \piece(u)= \frac{\vv_{u(1+\epsilon)}\beta}{2} <  \frac{\beta }{2}.
\end{equation}
When $u\ge u_\star$ (which we allow in Section~\ref{ss:intaprox}) we only require a weaker condition
\begin{align}
  \label{e:beta}
  0 < \piece <\beta=\good /{100} \leq  1/ {100}.
\end{align}

We recall from Section~\ref{s:partsbridges} the segments $Y^i$ with length
$L=n^\piece$ constructed on the
probability space $(\Omega ,\Plr)$. We define
\begin{align}
  \label{e:M} M_u = \lceil un/ (L+\ell) \rceil, \text{ for } u>0.
\end{align}
We consider the \textit{vacant set left by segments}
$\bV^u = V \setminus \cup_{0\le i< M_u} \Ran Y^i$,
and the corresponding random configuration $\xi_u \in \{0,1\}^V$ defined by
\begin{equation}
  \label{eq:xi}
  \xi_u = 1_{\bV^u} =  1\big\{V \setminus \cup_{0\le i< M_u} \Ran Y^i \big\}.
\end{equation}

Given a configuration $\eta \in \{0,1\}^V$
or $\eta \in \{0,1\}^{\mathbb V}$, let
\begin{equation}
  \parbox{0.8\textwidth}{$\comp_x(\eta)$ be the connected component of
    $\supp \eta := \{y \in V: \eta(y) = 1\}$ containing the vertex $x$,}
\end{equation}
and $\comp_\Max (\eta)$ be the largest such component.

For any fixed vertex $y \in V$, we define
\begin{equation}
  \label{e:BB'}
  B_y=B(y, \beta \ld n), \qquad B_y' = B(y, 5 \beta \ld n) \subseteq V.
\end{equation}
We also set
\begin{equation}
  \mathbb B = B(o, \beta \ld n),\qquad
  \mathbb B'= B(o, 5 \beta \ld n) \subset \mathbb V.
\end{equation}
If $y$ has a tree-like neighbourhood of radius $5\beta \ld n$,
\begin{align}
  \label{e:phi}
  \text{there is a graph isomorphism $\phi : B'_y \to \mathbb B'$ such that
    $\phi(y)=o$.}
\end{align}
In order to make the formulas less complicated, we will mostly identify
the vertices of $G$ and of $\mathbb T_d$ linked by this isomorphism and omit
$\phi $ from the notation. The vertex $y$ is always given by the context.
In particular, for $z\in B_y$ we define $f(z)=f(\phi (z))$.

\subsection{Approximation by random interlacements}
\label{ss:intaprox}

With all notation in place, we can now approach the proof of
Theorem~\ref{th:usmall}. In this section we show that, provided
$\tx(B_y')=0$, the component of the set
$\bV^u\cap B_y =\supp \xi_u \cap B_y$ containing the centre $y$ of $B_y$
can, up to a small error, be controlled from above and from below by the
branching process introduced in \eqref{eq:fz} and below.
Note that for the next proposition it is not necessary to assume that
$u<u_\star$.

\begin{proposition}
  \label{pr:dominat}
  Assume \eqref{a:reg} and \eqref{a:gap}, and suppose that $\tx(B_y')=0$.
  Then for any $u \geq 0$, $\epsilon \in (0,1)$, we can construct random
  sets $\comp^{u(1+\epsilon)}$ and
  $\comp^{u(1-\epsilon)} \subseteq {\mathbb V}$ distributed as
  $\comp_o$ under $\Ber_{u(1+\epsilon)}$ and
  $\Ber_{u(1-\epsilon)}$ such that
  \begin{align}
    \Plr
    \left[
      \begin{array}{c}
        \comp^{u(1+\epsilon)}\cap \mathbb B \subseteq \comp_y
        (1_{B_y} \cdot \xi_{\hat u}) \subseteq
        \comp^{u(1-\epsilon)},
        \\ \text{ for all } {\hat u} \in \bigl( u (1-
            \tfrac{\epsilon}{2} ), u (1+ \tfrac{\epsilon}{2} ) \bigr)
      \end{array}
    \right]  \geq 1- c_{ \piece,  u, \epsilon} n^{-2\beta}. \nonumber
  \end{align}
\end{proposition}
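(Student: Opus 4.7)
The plan is to explore $\comp_y(1_{B_y}\xi_{\hat u})$ by a BFS on $B_y$ and, at each query, to couple $\xi_{\hat u}(z)$ with Bernoulli variables $W_z^\pm$ of parameters $e^{-u(1\pm\epsilon)f(z)}$ sitting on the corresponding tree vertex $\phi(z)$; here $\phi:B_y'\to\mathbb B'$ is the isomorphism supplied by $\tx(B_y')=0$, normalised so that $\phi(y)=o$. Gathering these $W_z^\pm$ along the BFS and completing them by fresh randomness to independent Bernoulli fields on $\mathbb V$ with success probabilities $e^{-u(1\pm\epsilon)f(\cdot)}$, Theorem~\ref{l:interbr} identifies their root clusters with $\comp^{u(1\pm\epsilon)}$, and the pointwise sandwich $W_z^+\le\xi_{\hat u}(z)\le W_z^-$ forces $\comp^{u(1+\epsilon)}\cap\mathbb B\subseteq\comp_y(1_{B_y}\xi_{\hat u})\subseteq\comp^{u(1-\epsilon)}$ on the good event.

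The engine of the coupling is the following identity. At a BFS step with explored-vacant set $A\subseteq B_y$ and query $z\in\partial_e A\cap B_y$, independence of the segments $Y^i$ under $\Plr$ yields
\begin{equation*}
  \Plr[z \in \bV^{\hat u} \mid A \subseteq \bV^{\hat u}]
  = \bigl(P[H_{A\cup\{z\}} > L \mid H_A > L]\bigr)^{M_{\hat u}}.
\end{equation*}
With $s=4\beta\ld n$, the hypotheses of Proposition~\ref{p:c} hold: properness of $\Q_A(z,s)$ is automatic since $B_y'$ is a tree; $|B(A,s)|\le cn^{5\beta}\le\sqrt n$ for $|A|\le n^\beta$; and $s$ lies in the admissible range because $\beta=\good/100$. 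Hence Proposition~\ref{p:c}, combined with Lemma~\ref{l:hitpoint} for the initial step $A=\varnothing$, $z=y$, gives $\Plr[z\in\bV^{\hat u}\mid A\subseteq\bV^{\hat u}]=\exp\{-\hat u f(z)+\mathrm{err}\}$ with $f$ as in \eqref{eq:fz}. Once $|\mathrm{err}|<u\epsilon f(z)/4$, this probability lies in $[e^{-u(1+\epsilon)f(z)},e^{-u(1-\epsilon)f(z)}]$ uniformly for $\hat u\in(u(1-\epsilon/2),u(1+\epsilon/2))$, and the monotone Bernoulli coupling produces $W_z^\pm$ satisfying the required sandwich simultaneously for every such $\hat u$.

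The main obstacle is keeping $\mathrm{err}$ small at every BFS step. Multiplying the bound from Proposition~\ref{p:c} by $M_{\hat u}\sim n/L$ yields a per-step error of order $|A|^2\ln^4 n/(d-1)^s+|A|/L$. The choice $s=4\beta\ld n$, made possible by the buffer $B_y'\setminus B_y$ of width $4\beta\ld n$, renders the first summand $O(n^{-2\beta}\ln^4 n)$ uniformly for $|A|\le n^\beta$. The genuine difficulty is the second summand $|A|/L$: since $L=n^\piece$ with $\piece=v_{u(1+\epsilon)}\beta/2$ is much smaller than $n^\beta$, one must cap the exploration by a stopping rule that halts once $|A|$ grows comparable to $L$, and then argue — using the dominating branching-process $\comp^{u(1-\epsilon)}$ already established at earlier BFS steps as a template — that the cap is not reached except on an event of probability at most $c_{\piece,u,\epsilon}n^{-2\beta}$. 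That exceptional event contributes precisely the $c_{\piece,u,\epsilon}n^{-2\beta}$ of the conclusion.
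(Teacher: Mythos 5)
Your coupling engine conditions on the wrong $\sigma$-algebra, and this is not a technicality. The identity $\Plr[z\in\bV^{\hat u}\mid A\subseteq\bV^{\hat u}]=(P[H_{A\cup\{z\}}>L\mid H_A>L])^{M_{\hat u}}$ is correct, but to build a pointwise coupling along a BFS you need the conditional law of $\xi_{\hat u}(z)$ given the \emph{full exploration history}, which records not only that the explored cluster $A$ is vacant but also which queried boundary vertices were found occupied. An occupied finding tells you that some segment hits that vertex, and a segment conditioned to hit a neighbour $w$ of the current query $z$ also hits $z$ with probability of constant order; so at such steps the true conditional vacancy probability can drop by a constant factor below $e^{-u(1+\epsilon)f(z)}$, which destroys the lower inclusion $\comp^{u(1+\epsilon)}\cap\mathbb B\subseteq\comp_y(1_{B_y}\xi_{\hat u})$. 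This is exactly why, in the subcritical section, the paper introduces the free/tied decomposition (Lemma~\ref{lm:givenAk}) and uses it only \emph{one-sidedly}: tied segments are discarded because only an upper bound on vacancy is needed there. Your argument silently assumes a two-sided control of the conditional law given the history, which Proposition~\ref{p:c} does not provide. Separately, your own error accounting shows the second defect: the per-step error contains $|A|/L$ with $L=n^{\piece}$, $\piece=\vv_{u(1+\epsilon)}\beta/2$, so it becomes of order one (and worse) as soon as $|A|$ reaches size $n^{\piece}$. But the whole point of the proposition is to control clusters of mesoscopic size $n^{\vv\beta}\gg n^{\piece}$, and a supercritical cluster exceeds any cap of size $n^{\piece}$ with probability bounded below by its survival probability, not $O(n^{-2\beta})$; so the proposed stopping-rule repair cannot work, and the upper inclusion $\comp_y\subseteq\comp^{u(1-\epsilon)}$ is left unproved precisely in the regime that matters.

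The paper avoids both problems by never conditioning on a growing explored set. It Poissonizes the number of segments ($\Pi_\pm$), partitions the relevant trajectories into the \emph{disjoint} classes $W_z$ of \eqref{eq:Wy} (segments hitting $z$ whose trace in $B_y$ stays among the descendants $D_z$) plus an exceptional class $W$ of trajectories that re-enter $B_y$ after leaving $B_y'$. Disjointness plus the Poisson number of segments make the indicators $\tilde\xi_\pm(z)$ exactly independent, with success probabilities computed \emph{unconditionally} via reversibility and comparison with escape probabilities on $\mathbb T_d$ (Lemma~\ref{l:perc}, estimate \eqref{e:perc}); auxiliary Bernoulli variables then tune these to exactly $e^{-u(1\pm\epsilon)f(z)}$, and the only error events are the Poissonization bound \eqref{eq:ldpxipm} and $\Plr[\mathcal Z\neq 0]\le cn^{-2\beta}$ from Lemma~\ref{lm:mix}. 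In particular the $n^{-2\beta}$ in the statement comes from re-entering trajectories, not from accumulated per-step conditioning errors. If you want to salvage a BFS-based proof you would need a substitute for Lemma~\ref{lm:givenAk} giving two-sided control in the presence of tied segments and an error bound not proportional to $|A|/L$; as written, the proposal has a genuine gap on both counts.
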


\begin{remark}
  Proposition~\ref{pr:dominat} can also be interpreted as a control of the
  component of $\bV^u\cap B_y$ by random interlacement on $\mathbb T_d$.
  Indeed, due to Theorem~\ref{l:interbr}, the sets
  $\comp^{u(1\pm \epsilon)}$ have the same distribution under the
  Bernoulli measure $\Ber_{u(1\pm \varepsilon )}$ as under the random
  interlacement measure $\mathbb Q_{u(1\pm \varepsilon )}$.
\end{remark}

\begin{proof}
  Throughout this proof, we write $B$, $B'$ rather than $B_y$, $B'_y$.
  Our strategy resembles the proof of Theorem~5.1 in \cite{Tei09}. We
  first poissonise the number of trajectories entering in the
  definition of the configurations $\xi_{\hat u} $ for
  ${\hat u} \in (u(1-\epsilon/2), u(1+ \epsilon/2))$, see
  \eqref{eq:xi}. To this end we introduce two independent
  Poisson random variables  $\Pi_+$ and $\Pi_-$ defined on
  $(\Omega ,\Plr)$ with parameters
  $un^{1-\piece }(1+\frac{3\varepsilon}{4})$ and
  $un^{1-\piece }(1-\frac{3\varepsilon}{4})$, independent of all
  previously introduced random variables. We are going to compare
  $\xi_{\hat u}$ with the configurations $\xi_-$, $\xi_+$ defined by
  \begin{align}
    \xi_- = 1\big\{V \setminus \cup_{i<\Pi_-} \Ran Y^i  \big\} \textrm{ and }
    \xi_+ = 1\big\{V \setminus \cup_{i<\Pi_+} \Ran Y^i  \big\}.
  \end{align}
  Clearly, by a large deviation argument, since $M_u=un^{1-\piece }(1+o(1))$,
  \begin{equation}
    \label{eq:ldpxipm}
    \begin{split}
      &\Plr [\xi_+ \leq \xi_{\hat u} \leq \xi_- \text{, for all }
        {\hat u} \in (u(1- \epsilon/2),  u(1+ \epsilon/2))] \\
      &\quad \ge \Plr[\Pi_-\le M_{u(1-\epsilon/2)} <
        M_{u(1+\epsilon/2)} \le \Pi_+]
      \geq 1- c_{ \piece, u, \epsilon}\exp\{-cn^{1-\piece}\}.
    \end{split}
  \end{equation}
  \begin{figure} [ht]
    \begin{center}
      \includegraphics[angle=0, width=0.3\textwidth]{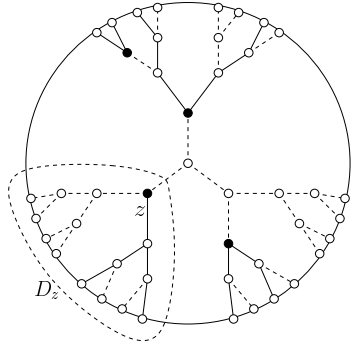}\\
      \caption{The trace left by some pieces in $B$ and the set $D_z$. The
        black and white circles represent respectively the values $0$ and $1$
        for the variables $\tilde \xi_\pm (z)$ defined in \eqref{eq:Zpmz}.}
      \label{fig:Dz}
    \end{center}
  \end{figure}

  Next, we will dominate $\xi_{\pm}$ from above and from below by a collection
  of i.i.d.\ Bernoulli random variables. For every $z \in B$, we define the set
  $D_{z}$ as the set of descendants of $z$ in $B$, that is
  \begin{align*}
    D_z = \{z' \in B:
      \text{ any path from $z'$ to $y$ meets either $z$ or some vertex in $B^c$}\},
  \end{align*}
  see Figure~\ref{fig:Dz}. Consider the following disjoint subsets of
  $D([0,L],V)$, cf. \cite{Tei09} (5.3),
  \begin{equation}
    \label{eq:Wy}
    \begin{split}
      W_{z} &= \big\{ Y \in  D([0,L],V): z \in \Ran Y  \cap B \subset D_{z}
        \big\}, \, z \in B,\\
      W &= \big\{ Y \in  D([0,L],V): \Ran Y  \cap B \neq \varnothing \big\}
      \setminus \textstyle{\bigcup\limits_{z \in B}} W_{z}.
    \end{split}
  \end{equation}
  In particular, all trajectories in $W$ must enter $B$, then exit $B'$ and enter $B$
  again, see Figure~\ref{fig:Dz}. We define the random configurations
  $\tilde \xi_+$, $\tilde \xi_-\in \{0,1\}^B$ on $(\Omega ,\Plr)$ by
  \begin{equation}
    \label{eq:Zpmz}
    \begin{split}
      \tilde \xi_-(z) &=1\{Y^i\notin W_z,\,\forall i<\Pi_-\},\quad z\in B,\\
      \tilde \xi_+(z)&=1\{Y^i\notin W_z,\,\forall i<\Pi_+\},\quad z\in B,
    \end{split}
  \end{equation}
  see Figure~\ref{fig:Dz} again. Since the sets $W_z$ are
  disjoint for distinct $z$'s, the variables $\xi_{+}(z)$ will be
  independent for distinct $z$'s due to the
  Poissonian character of $\Pi_+$ (the same
    will also hold for $\xi_-(x)$), see Lemma~\ref{l:perc}
  below. We further consider the random variable
  \begin{equation}
    \mathcal Z = 1\{ Y^i\notin W,\, \forall i< \Pi_+\}.
  \end{equation}
  Observe that
  \begin{equation}
    \label{eq:Zzero}
    \begin{array}{c}
      \text{on the event }\{\mathcal Z=0\}, \,
      \mathcal{C}_y(1_B \cdot \xi_-) = \mathcal{C}_y(\tilde \xi_-)
      \textrm{ and } \mathcal{C}_y(1_B \cdot \xi_+) = \mathcal{C}_y(\tilde \xi_+).
    \end{array}
  \end{equation}
  The following lemma shows that the laws of $\tilde \xi_-$ and
  $\tilde \xi_+$ on $\{0,1\}^B$ are comparable with the laws
  $\Ber_{u(1-3\epsilon/4)}$ and $\Ber_{u(1+3\epsilon/4)}$
  of Bernoulli percolation introduced above, restricted to
  $\{0,1\}^{\mathbb B}$ (which by assumption can be identified with
    $\{0,1\}^B$).

  \begin{lemma} \label{l:perc}
    For $\pm$ denoting either $+$ or $-$, the events
    $(\{ \tilde \xi_\pm(z) = 1 \})_{z \in B}$ are independent and satisfy
    \begin{equation}
      \label{eq:Sigmaxi}
      \begin{split}
        &\Big| \Plr[\tilde \xi_+(z) = 1]
        - e^{-u(1 + 3\epsilon/4) f (z)} \Big|
        \leq c_{ \piece, u} n^{-\piece/3}, \text{ and}\\
        &\Big| \Plr[\tilde \xi_-(z) = 1]
        - e^{-u(1 - 3\epsilon/4) f  (z)} \Big|
        \leq c_{ \piece, u} n^{-\piece/3}.
      \end{split}
    \end{equation}
  \end{lemma}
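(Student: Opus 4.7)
The plan splits into (i) Poisson thinning to establish independence and the marginal formula, and (ii) a precision estimate of $P^L[W_z]$. For the first part, note that since $\Pi_\pm$ is Poisson with parameter $un^{1-\piece}(1\pm 3\epsilon/4)$ and independent of the i.i.d.\ family $(Y^i)_{i\geq 0}$, the collection $\{Y^i : 0 \leq i < \Pi_\pm\}$ is a Poisson point process on $D([0,L],V)$ with intensity $un^{1-\piece}(1\pm 3\epsilon/4)\,P^L$. The sets $(W_z)_{z\in B}$ are pairwise disjoint by~\eqref{eq:Wy}, so the point counts in distinct $W_z$'s are independent Poisson variables. This yields both the independence of $\{\tilde\xi_\pm(z)=1\}_{z\in B}$ and the formula
\[
\Plr[\tilde\xi_\pm(z) = 1] = \exp\bigl\{-un^{1-\piece}(1\pm 3\epsilon/4)\,P^L[W_z]\bigr\}.
\]
Using $|e^{-a}-e^{-b}|\leq |a-b|$ for $a,b\geq 0$, the target bound~\eqref{eq:Sigmaxi} reduces to showing that $n^{1-\piece} P^L[W_z] = f(z) + O(n^{-\piece/3})$ uniformly in $z\in B$.

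For $z = y$ the constraint $\Ran Y \cap B \subset D_y = B$ is vacuous, so $W_y = \{H_y \leq L\}$, and Lemma~\ref{l:hitpoint}---applicable with $s$ of order $5\beta\ld n$ by the hypothesis $\tx(B'_y)=0$---supplies the desired approximation $P^L[W_y] = Lf(y)/n + O(\text{admissible error})$. For $z\neq y$, the tree structure of $B'_y$ implies that any trajectory reaching $D_z$ from outside $B$ without meeting $B\setminus D_z$ must enter $D_z$ through the unique edge joining $z$ to its parent. Hence $W_z = \{H_z \leq L\}\cap\{H_{B\setminus D_z} > L\}$, and
\[
P^L[W_z] = P[H_{B\setminus D_z} > L] - P[H_{(B\setminus D_z) \cup \{z\}} > L].
\]
Proposition~\ref{p:c} applied with $A = B\setminus D_z$ and distinguished vertex $z$ (properness of $\Q_A(z, s)$ follows from $\tx(B'_y)=0$) then identifies the leading exponent $-Lf(z)/n$ in the ratio $P[H_{A\cup\{z\}} > L]/P[H_A > L]$.

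The main obstacle is that Proposition~\ref{p:c} supplies only an error $O(|A|/n) = O(n^{\beta-1})$ in the log of this conditional probability, while the signal $Lf(z)/n = O(n^{\piece-1})$ is smaller---since $\piece < \beta$ by~\eqref{e:newgamma}---so the naive bound is off by a factor $n^{\beta-\piece}$. To close this gap I would decompose $W_z$ according to the number $k$ of entries of the walk into $B'_y$ during $[0,L]$. The $k\geq 2$ contribution to $P^L[W_z]$ is bounded above by $(L|B'_y|/n)^2 = O(n^{2\piece + 10\beta - 2})$, which after multiplication by $n^{1-\piece}$ is $O(n^{\piece + 10\beta - 1}) = o(n^{-\piece/3})$ thanks to the margins in~\eqref{e:newgamma}--\eqref{e:beta}. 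On the complementary single-excursion event, the walk, while inside $B'_y$, is a random walk on the truncated tree $B'_y \simeq \mathbb B'$, so the remaining hitting probabilities for $z$, $B\setminus D_z$ and their union reduce to the explicit gambler's-ruin formula~\eqref{eq:gamble}, producing the rate $Lf(z)/n$ up to an error coming only from the (negligible) probability of exiting $B'_y$ during the excursion. This replacement of the coarse Proposition~\ref{p:c}-bound by an explicit tree computation inside $B'_y$---made possible by the enlarged radius $5\beta\ld n$ of $B'_y$---is the subtle and main technical step of the proof.
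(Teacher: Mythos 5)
Your first half coincides with the paper's argument: the Poisson thinning over the disjoint sets $W_z$ gives the independence and the exact formula $\Plr[\tilde\xi_\pm(z)=1]=\exp\{-un^{1-\piece}(1\pm 3\epsilon/4)P^L[W_z]\}$, and the Lipschitz bound reduces everything to the estimate $|n^{1-\piece}P^L[W_z]-f(z)|\le c_\piece n^{-\piece/3}$, which is exactly \eqref{e:perc}. Your treatment of $z=y$ via Lemma~\ref{l:hitpoint} is also fine, and you correctly identify that Proposition~\ref{p:c} alone cannot give \eqref{e:perc} for $z\neq y$, since its error $c|A|/n\sim n^{\beta-1}$ dwarfs the signal $Lf(z)/n\sim n^{\piece-1}$.

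The gap is in the step that is supposed to replace Proposition~\ref{p:c}. First, the bound ``the $k\ge 2$ contribution is at most $(L|B'_y|/n)^2$'' is not correct as stated: after the walk exits $B'_y$ it sits on $\partial_e B'_y$, so it re-enters $B'_y$ with probability of order one, not of order $L|B'_y|/n$. What is rare is exiting $B'_y$ and then returning to the \emph{inner} ball $B_y$, and this requires the annulus estimate of Lemma~\ref{lm:mix} (this is precisely how the event $W$ is handled in the proof of Proposition~\ref{pr:dominat}); that slip is repairable. The more serious problem is the single-excursion step: saying that inside $B'_y$ the walk is a biased walk on a tree and invoking \eqref{eq:gamble} does not by itself ``produce the rate $Lf(z)/n$.'' Gambler's ruin only gives conditional escape/hitting probabilities \emph{after} the walk is at a prescribed vertex; the crux is how a walk started from the uniform law enters the ball (through which boundary vertex, at which time, with which probability), and it is exactly this entrance structure that generates the factor $L/n$ and the specific product $f(z)=P^{\mathbb T_d}_z[\hat H^+_{A_z\cup\{z\}}=\infty]\,P^{\mathbb T_d}_z[\hat H_{A_z}=\infty]$ (with $A_z=B\setminus D_z$), and it must be controlled to relative precision $n^{-\piece/3}$. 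The paper resolves this by conditioning on the number of jumps, decomposing at the \emph{first visit to $z$} rather than at the entrance into $B'_y$, and using reversibility of the discrete skeleton with respect to the uniform measure to rewrite the probability as $\frac1n\sum_{k\le r}P_z[\hat H^+_{A_z\cup\{z\}}>k]\,P_z[\hat H_{A_z}>r-k]$ (see \eqref{e:perc0}); only then do the two factors reduce, via Lemma~\ref{lm:mix} and one-dimensional estimates, to tree escape probabilities. Your sketch contains no time-reversal identity nor any substitute (such as a sufficiently precise computation of the entrance measure and entrance time into $B'_y$), so the claimed conclusion of the single-excursion analysis is asserted rather than proved; as it stands this is the missing idea.
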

  Before we prove this lemma, we complete the proof of
  Proposition~\ref{pr:dominat}. For $z\in\mathbb V $ let
  \begin{equation}
    \begin{split}
      &p_z^+ = e^{-u(1 + \epsilon) f (z)}, \qquad
      q^+_z = \Plr[\tilde \xi_+(\phi^{-1}(z)) = 1],\\
      &p_z^- = e^{-u(1 - \epsilon) f (z)},\qquad
      q^-_z = \Plr[\tilde \xi_-(\phi^{-1}(z)) = 1].
    \end{split}
  \end{equation}
  Then Lemma~\ref{l:perc} implies that for $n \geq c'_{ \piece,u ,\epsilon}$,
  \begin{align}
    p^+_z \leq q^+_z \leq q^-_z \leq p^-_z, \text{ for all } z \in {\mathbb B}.
  \end{align}

  We now construct the sets $\comp^{u(1 \pm \epsilon)}$ as stated in
  the proposition by adding to our
  probability space $(\Omega, \Plr)$ a collection
  $\{U^+_z, U^-_z\}_{z \in {\mathbb V}}$ of independent Bernoulli-distributed
  random variables which will fine tune the values $q^\pm_z$ to match the
  $p^\pm_z$'s:
  \begin{itemize}
    \item For every $z \in {\mathbb V} \setminus {\mathbb B}$, the
    parameters of $U^+_{z}$ and $U^-_{z}$ are $p^+_z$ and $p^-_z$.
    \item For
    $z \in {\mathbb B}$, $U^+_{z}$ and $U^-_{z}$ have parameters $p_z^+/ q^+_z$
    and $(p_z^- - q^-_z)/(1-q^-_z)$.
  \end{itemize}
  We then define $\comp^{u(1 \pm \epsilon)}$ by
  \begin{align}
    \comp^{u(1+\epsilon)}
    &= \comp_o \Bigl(
      \bigl( ({\tilde \xi}_+ (z) \wedge U^+_z)
        1_{z \in {\mathbb B}}
        + U^+_z 1_{z \in {\mathbb V} \setminus {\mathbb B}}
        \bigr)_{z \in {\mathbb V}} \Bigr),
    \\ \comp^{u(1-\epsilon)}
    &= \comp_o
    \Bigl( \bigl( ({\tilde \xi}_- (z) \vee U^-_z)
        1_{z \in {\mathbb B}}
        + U^-_z 1_{z \in {\mathbb V} \setminus {\mathbb B}}
        \bigr)_{z \in {\mathbb V}} \Bigr).
  \end{align}
  Note that we then have
  \begin{equation}
    \comp^{u(1+\epsilon)} \subset
    \comp_y ({\tilde \xi}_+)
    \subseteq \comp_y ({\tilde \xi}_- )
    \subset \comp^{u(1-\epsilon)}.
  \end{equation}
  Since the variables $\tilde \xi_\pm$ and $U^\pm_z$ are all independent
  (cf.~Lemma~\ref{l:perc}), it is elementary to check
  that the laws of $\comp^{u(1\pm \epsilon)}$ agree with those of
  $\comp_o$ under $\Ber_{u(1\pm \epsilon)}$ for large $n$. Moreover, we have
  by \eqref{eq:Zzero} that on the event
  $\{{\mathcal Z}=0\} \cap \{\xi_+ \leq \xi_{\hat u} \leq \xi_-\}$,
  \begin{align}
    \comp^{u(1+\epsilon)}\cap \mathbb B \subseteq  \comp_y(1_B \cdot
      \xi_+) \subseteq \comp_y (1_B \cdot \xi_{\hat u}) \subseteq
    \comp_y(1_B \cdot \xi_-) \subseteq
    \comp^{u(1-\epsilon)}\cap \mathbb B.
  \end{align}

  Since we already know the bound \eqref{eq:ldpxipm}, it thus only remains to
  prove that
  \begin{equation}
    \Plr[\mathcal Z \neq 0]
    \leq c_{u} n^{-2\beta}.
  \end{equation}
  If $\mathcal Z \neq 0$, there is an $i< \Pi_+$ such that  $Y^i \in W$. Since
  $\tx(B') = 0$, if $Y^i\in W$, then  there exist  times $t_1 < t_2 < t_3$ such
  that $Y^i_{t_1} \in B$, $Y^i_{t_2} \not \in B'$ and again $Y^i_{t_3} \in B$,
  see \eqref{eq:Wy}.   Using the strong Markov property we thus get
  \begin{equation}
    \label{e:YjinW}
    \Plr[Y^i\in W]=P^L[W]  \leq P^L[H_{B} < L] \sup_{w \in V
      \setminus B'} P^L_w[H_{B} < L].
  \end{equation}
  Note that by stationarity of the random walk with respect to the uniform
  distribution,
  \begin{align}
    P^L[H_{B} < L] \leq E^L \Bigl[ \sum_{k=0}^{N_L} 1\{{\hat X}_k \in B\}
      \Bigr] = E^L[N_L]|B|/n = L|B|/n.
  \end{align}
  Using Lemma~\ref{lm:mix} for the second term on the right-hand side of
  \eqref{e:YjinW}, we hence obtain
  \begin{equation}
    \begin{split}
      \Plr [Y^i\in W]
      \le c_{ \piece} n^{\piece + \beta - 1}
      (c n^{\piece -4 \beta } + e^{-c'n^\piece })
      \le c_{ \piece} n^{2\piece -3\beta -1}.
    \end{split}
  \end{equation}
  Since $\mathcal Z$ has Poisson distribution with parameter
  $un^{1-\piece}(1+ 3 \varepsilon/4) P^L[W]$, using $\gamma <\beta $,
  \begin{equation}
    \label{eq:Znotzero}
    \Plr [\mathcal Z \neq 0]
    =
    1-\exp\{-u (1+ 3 \epsilon/ 4) n^{1-\piece} P^L[W]\}
    \leq c_{ u} n^{-2\beta}.
  \end{equation}
  Up to Lemma~\ref{l:perc}, this completes the proof of Proposition~\ref{pr:dominat}.
\end{proof}

\begin{proof}[Proof of Lemma~\ref{l:perc}.]
  Since the sets $W_{z}$, $z \in B$, are mutually disjoint and $\Pi_-$ is
  Poisson distributed, independent of the ${Y^i}'s$, the random variables
  $|\{Y^i\in W_z:i<\Pi_- \}|$, $z\in B$ are independent Poisson random
  variables with parameters
  $un^{1-\piece }(1 -  3\varepsilon/4)\Plr[Y^0\in W_z]$. In particular,
  since $\tilde \xi_-(z)= 1\{ |\{Y^i\in W_z:i<\Pi_- \}| = 0\}$, the events
  $(\{ \tilde \xi_-(z) = 1 \})_{z \in B}$ are indeed independent. Moreover, we
  have
  \begin{equation}
    \label{e:xixi}
    \Plr[\tilde \xi_-(z)=1]
    =\Plr[|\{Y^i\in W_z:i<\Pi_-\}|=0]
    =e^{-un^{1-\piece }({1- 3 \varepsilon/4})\Plr[Y^0\in W_z]}.
  \end{equation}
  The above arguments apply also to $\tilde \xi_+$ and yield the analogous
  claims. Since the function $e^{-x}$ is Lipschitz with constant $1$ on
  $[0,\infty)$, we see that the left hand sides of \eqref{eq:Sigmaxi} are
  bounded by $c_u|n^{1-\piece } P^L[W_z] - f(z)|$. It is therefore
  sufficient to prove that
  \begin{align}
    \label{e:perc}
    |n^{1-\piece } P^L[W_z] - f (z)| \leq c_{ \piece} n^{-\piece/3},
    \text{ for any } z \in B.
  \end{align}
  Note the relation of this approximation with \eqref{eq:approximate}.

  Conditioning on the number of jumps $N_L$ made by $X$ in the time interval
  $[0,L]$ and using independence of $N_L$ and the discrete skeleton  $\hat X$,
  we have
  \begin{align}
    \label{e:perc00}
    P^L[W_z] &= \sum_{r \geq 0} P[N_L = r]
    P[z \in \{{\hat X}_0, \ldots, {\hat X}_r\} \cap B \subset D_z].
  \end{align}
  Let us fix any $r$ such that
  $\lfloor 2^{-1} n^\piece\rfloor \leq r \leq \lfloor 2n^\piece\rfloor$ and
  throughout the rest of this proof write $A_z = B \setminus D_z$. Summing over
  all possible times $k$ when $\hat X$ first visits $z$ and applying the simple
  Markov property, we obtain
  \begin{align}
    P[z \in \{{\hat X}_0, \ldots, {\hat X}_r\} \cap B \subset D_z] = \sum_{0
      \leq k \leq r} P[{\hat H}_{A_z \cup \{z\}} =k, {\hat X}_k =z] P_z [ {\hat
        H}_{A_z} > r-k ],
  \end{align}
  where we are using the convention that ${\hat H}_\varnothing = \infty$, which
  occurs in the last probability when $z=y$, in which case
  $A_y = B \setminus D_y = \varnothing$. Using reversibility of $\hat X$ with
  respect to the uniform distribution on the first probability in the product,
  we deduce that
  \begin{align}
    \label{e:perc0}
    P[z \in \{{\hat X}_0, \ldots, {\hat X}_r\} \cap B \subset D_z] =
    \frac{1}{n} \sum_{0 \leq k \leq r}  P_z[{\hat H}^+_{A_z \cup \{z\}} > k ]
    P_z [ {\hat H}_{A_z} > r-k ].
  \end{align}

  We now claim that the following estimates hold uniformly for all
  $n^{\piece/2} \leq k \leq r-n^{\piece/2}$:
  \begin{align}
    \label{e:perc1}
    &\sup_{z' \in V \setminus B'} P_{z'} [{\hat H}_B \leq k]
    \leq c_{ \piece} n^{-3 \beta} \text{ and } \sup_{z' \in B'}
    P_{z'} [{\hat H}_{V \setminus B'} \geq k] \leq c_{\piece} (\log n) n^{-\piece/2}.
  \end{align}
  Indeed, the first estimate follows from Lemma~\ref{lm:mix} and the choice of
  $\piece < \beta$ in \eqref{e:newgamma}, while the second estimate in
  \eqref{e:perc1} follows from the Chebyshev inequality and the bound
  $E_{z'}[{\hat H}_{V \setminus B'}] \leq c_\beta \log n$, which is an
  elementary estimate on the expected amount of time it takes a one-dimensional
  biased random walk to reach the level $5 \beta \ld n$.

  For any $k$ as above,
  it follows from \eqref{e:perc1} and the strong Markov property applied at
  time ${\hat H}_{V \setminus B'}$ that
  \begin{align}
    \bigl| P_z[{\hat H}^+_{A_z \cup \{z\}} > k ] -
    P_z[ {\hat H}^+_{A_z \cup \{z\}} > {\hat H}_{V \setminus B'}] \bigr|
    \leq c_{ \piece} (n^{-3 \beta} + n^{-\piece/3}).
  \end{align}

  We now relate the second probability on the left-hand side to the escape
  probability to infinity from $\mathbb B \cup \{z\}$ for the random walk on
  the tree ${\mathbb T}_d$. By the strong Markov property applied at
  time ${\hat H}_{V \setminus B'}$, we have (identifying $z$ and $A_z$
    with corresponding objects on $\mathbb T_d$)
  \begin{align}
    P^{{\mathbb T}_d}_{z}[ {\hat H}^+_{A_z \cup \{z\}} = \infty ]
    \leq P_{z}[ {\hat H}^+_{A_z \cup \{z\}} > {\hat H}_{V \setminus B'}]
    \leq \frac{P^{{\mathbb T}_d}_{z}
      [ {\hat H}^+_{A_z \cup \{z\}} = \infty ]}
    {\inf_{z' \in {\mathbb V} \setminus \mathbb B'}
      P^{{\mathbb T}_d}_{z'} [H_{\mathbb B} = \infty]}.
  \end{align}
  By another elementary estimate on the biased random walk
  $(\dist(B,{\hat X}_n))_{n \geq 0}$, we have
  \begin{align}
    \inf_{z' \in {\mathbb V} \setminus \mathbb B'}
    P^{{\mathbb T}_d}_{z'} [H_{\mathbb B} = \infty] \geq 1 - c n^{-4 \beta}.
  \end{align}
  Collecting the above estimates  we obtain that for any
  $n^{\piece/2} \leq k \leq r-n^{\piece/2}$,
  \begin{align}
    \label{e:perc3} \Bigl| P_z[{\hat H}^+_{A_z \cup \{z\}} > k ]
    - P^{{\mathbb T}_d}_{z}[ {\hat H}^+_{A_z \cup \{z\}} = \infty ] \Bigr|
    \leq c_{ \piece} n^{-\piece/3},
  \end{align}
  and the same computations with ${\hat H}^+_{A_z \cup \{z\}}$ replaced by
  ${\hat H}_{A_z}$ show that
  \begin{align}
    \label{e:perc4} \Bigl| P_z[{\hat H}_{A_z} > r-k ]
    - P^{{\mathbb T}_d}_{z}[ {\hat H}_{A_z} = \infty ]
    \Bigr| \leq c_{ \piece} n^{-\piece/3}.
  \end{align}

  With estimates on one-dimensional random walk, we can compute the escape
  probabilities for random walk on the infinite tree explicitly. Indeed, by
  applying the simple Markov property at time $1$, and then computing the
  probability that a nearest-neighbour biased random walk on the integers does
  not return to $0$ when started at $1$, we obtain
  \begin{align}
    P^{{\mathbb T}_d}_{z}[ {\hat H}^+_{A_z \cup \{z\}} = \infty ]
    =
    \begin{cases} \frac{d-1}{d}
      \times \frac{d-2}{d-1} = \frac{d-2}{d},
      & \textrm{if } z \neq y,\\
      \frac{d-2}{d-1}, & \textrm{if } z=y,
    \end{cases}
  \end{align}
  and similarly, by the convention that ${\hat H}_{\varnothing} = \infty$,
  \begin{align}
    P^{{\mathbb T}_d}_{z}[ {\hat H}_{A_z} = \infty ]
    =
    \begin{cases}
      \frac{d-2}{d-1}, & \textrm{if } z \neq y,\\
      1, & \textrm{if } z=y.
    \end{cases}
  \end{align}
  Note that in both cases, the product of the two probabilities just
  computed equals $f (z)$, cf.~\eqref{eq:fz}. Inserting the
  estimates \eqref{e:perc3} and \eqref{e:perc4} into \eqref{e:perc0}, we
  therefore infer that for any $r$ such that
  $2^{-1} n^\piece \leq r \leq 2n^\piece$,
  \begin{align}
    \big| P[z \in \{{\hat X}_0, \ldots, {\hat X}_r\} \cap B \subset D_z]
    - {r}{n^{-1}}f (z)  \big|
    \leq c_{ \piece} n^{-1+(2\piece/3) }.
  \end{align}
  Using this estimate and the large deviation bound on $N_L$ from
  \eqref{eq:Wj} in \eqref{e:perc00}, we obtain that
  \begin{align}
    \big| P^L[W_z] - L n^{-1 }f (z)  \big| \leq c_{ \piece}
    n^{-1 + (2\piece/3) },
  \end{align}
  hence \eqref{e:perc}. This  completes the proof of Lemma~\ref{l:perc}
  and thus of Proposition~\ref{pr:dominat}.
\end{proof}

\subsection{Existence of mesoscopic components}
\label{ss:robust}
We now use the results of the last subsection to establish the existence
of many mesoscopic components in the (appropriately modified) vacant set.
In order to state the precise result, we need, as we have discussed
before, to introduce the long-range
bridges that are necessary to perform the sprinkling.

Recall from
Section~\ref{s:partsbridges} that $a_i = Y^i_0$ and $b_i = Y^i_L$
denote the start- and end-point of the segment $Y^i$, $i \geq 1$.
On the same probability space $(\Omega ,\Plr)$, we now
define a family of $D([0,\ell],V)$-valued random variables $Z^{i,j}$,
$i \in \mathbb N$, $j \in \{1, \dots, \lfloor \ln n \rfloor\}$, with law
characterized by the following:
\begin{equation}
  \begin{array}{l}
    \label{eq:ZgivenY}
    \text{conditionally on $a_i$, $b_i$, the $Z^{i,j}$'s are  independent,} \\
    \text{independent of the $Y^i$'s, and have distribution
      $P^\ell_{b_i,a_{i+j}}.$}
  \end{array}
\end{equation}
We call $Z^{i,j}$'s the \textit{long-range bridges}.
Given the $Y^i$'s and $Z^{i,j}$'s as above, we denote by
$\xi'_u \in \{0,1\}^V$ the indicator function of the vacant set left by
them, i.e.
\begin{equation}
  \label{eq:xis}
  \xi_u' = 1 \big\{V \setminus \cup_{i < M_u, j \leq \ln n }
    \{ \Ran Y^i  \cup \Ran Z^{i,j} \} \big\}.
\end{equation}
From definitions of $\xi_u$ and $\xi_u'$ it follows that
$\xi'_u \le \xi_u$.

We now show that the configuration $\xi'_u$ has many mesoscopic
components.
More precisely, the following proposition shows that
with high probability, a constant proportion of vertices is contained in
components of $\xi '_u$ with size of order $n^{\vv_{u(1+\epsilon)} \beta}$.
\begin{proposition}
  \label{pr:lotgood}
  For $0<u<u_\star$, there exist constants $c_1$, $c_2$ depending on $\good$,
  $\gap$ and $u$, such that
  \begin{equation}
    \label{eq:lotgood}
    \Plr\Big[
      \big| \{x\in V: |\comp_x(\xi'_{u})| \geq  c_1 n^{\vv_{u(1+\epsilon)} \beta} \} \big|
      \geq c_1 n \Big]
    \geq 1 - c_2 \exp\{-c \ln^3 n\}.
  \end{equation}
\end{proposition}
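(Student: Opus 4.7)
The approach is a first-moment-plus-concentration argument on $N := |\{y \in V: |\comp_y(\xi'_u)| \geq c_1 n^{\vv_{u(1+\epsilon)}\beta}\}|$. First I restrict to tree-like vertices: Lemma~\ref{l:txzero} applied with $r=5\beta\ld n$ and $\Delta=(\good-5\beta)\ld n$ (allowed since $5\beta\le\good$) produces $\mathcal T:=\{y\in V:\tx(B_y')=0\}$ with $|\mathcal T|\ge (1-n^{-c})n$. For $y\in\mathcal T$ the ball $B_y'$ is graph-isomorphic to $\mathbb B'\subset\mathbb T_d$, so Proposition~\ref{pr:dominat} applied with $\hat u=u$ yields an event of probability $\ge 1-cn^{-2\beta}$ on which $\comp^{u(1+\epsilon)}\cap\mathbb B\subseteq\comp_y(1_{B_y}\cdot\xi_u)\subseteq\comp^{u(1-\epsilon)}$. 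Since $u(1+\epsilon)<u_\star$, the branching process $\comp^{u(1+\epsilon)}$ is supercritical with offspring mean $m_{u(1+\epsilon)}>1$; standard supercritical Galton--Watson estimates provide $c_3>0$ and an event of probability $\ge c>0$ on which the generation-$\beta\ld n$ population is at least $c_3 m_{u(1+\epsilon)}^{\beta\ld n}=c_3 n^{\vv_{u(1+\epsilon)}\beta}$, and upper-tail large-deviation bounds give $|\comp^{u(1-\epsilon)}\cap\mathbb B|\le C_3 n^{\vv_{u(1-\epsilon)}\beta}$ with very high probability. Writing $C_y:=\comp_y(1_{B_y}\cdot\xi_u)$, there is therefore an event $E_y$ of probability $\ge c_5>0$ on which
\[
c_3 n^{\vv_{u(1+\epsilon)}\beta}\le |C_y|\le C_3 n^{\vv_{u(1-\epsilon)}\beta}.
\]

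The main technical step, and the main obstacle, is to show that on $E_y$ the bridges $Z^{i,j}$ destroy only a small piece of $C_y$. Because $B_y$ is a tree, $C_y$ is a subtree rooted at $y$ of depth at most $\beta\ld n$, and the set $D_y$ of vertices of $C_y$ separated from $y$ once the bridge-occupied vertices are removed satisfies
\[
|D_y|\le \sum_{v\in C_y\setminus\{y\}} \bbone\{v\in\textstyle\bigcup_{i,j}\Ran Z^{i,j}\}\cdot\bigl|\{w\in C_y:\, v\text{ lies on the $y$-to-$w$ path in }B_y\}\bigr|.
\]
Conditioning on the segments (hence on the endpoints $(a_i,b_i)$), the $Z^{i,j}$ are, by \eqref{eq:ZgivenY}, independent random walk bridges of length $\ell$; using \eqref{eq:I} to bound $P^\ell_{b,a}[X_t=v]\le c/n$ uniformly in $b,a,v$ and $t\in[\ln n/\gap,\ell-\ln n/\gap]$, then summing over $t\le\ell$ and over the $M_u\lfloor\ln n\rfloor$ bridges, one obtains uniformly in $v\in V$,
\[
\Plr\bigl[v\in\textstyle\bigcup_{i,j}\Ran Z^{i,j}\,\bigm|\,\text{segments}\bigr]\le c M_u(\ln n)\ell/n\le cn^{-\gamma}\ln^3 n.
\]
Since $\sum_{v\in C_y}|\{w\in C_y:\,v\text{ on }y\to w\}|\le|C_y|\cdot\beta\ld n$ and $|C_y|\le C_3 n^{\vv_{u(1-\epsilon)}\beta}$ on $E_y$, this gives $\Elr[|D_y|\mid\text{segments},E_y]\le n^{\vv_{u(1-\epsilon)}\beta-\gamma+o(1)}$. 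The choice $\gamma=\vv_{u(1+\epsilon)}\beta/2$ together with \eqref{e:epsv}, which in particular implies $\vv_{u(1+\epsilon)}>(5/6)\vv_{u(1-\epsilon)}$ and hence $(3/2)\vv_{u(1+\epsilon)}>\vv_{u(1-\epsilon)}$, makes this exponent strictly smaller than $\vv_{u(1+\epsilon)}\beta$, so the conditional expectation is $o(c_3 n^{\vv_{u(1+\epsilon)}\beta})$. Markov then yields $|D_y|\le|C_y|/2$ on an event of conditional probability $\ge 1/2$, and on that event $|\comp_y(\xi'_u)|\ge|C_y\setminus D_y|\ge c_1 n^{\vv_{u(1+\epsilon)}\beta}$ with $c_1:=c_3/2$. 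Combining, $\Plr[|\comp_y(\xi'_u)|\ge c_1 n^{\vv_{u(1+\epsilon)}\beta}]\ge c>0$ for every $y\in\mathcal T$, so $\Elr[N]\ge cn$.

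For the concentration I would apply Azuma--Hoeffding to the martingale which reveals the groups $(Y^i,Z^{i,1},\dots,Z^{i,\lfloor\ln n\rfloor})$ one at a time. Restricting to the high-probability event from Lemma~\ref{lm:visits} that every segment visits at most $2n^\gamma$ vertices and every bridge at most $\ln^3 n$ vertices (of probability $\ge 1-c_2\exp\{-c\ln^3 n\}$ after a union bound), each such group touches only $O(n^\gamma\ln^3 n)$ vertices of $V$ and can change $N$ only through the $y$'s within distance $\beta\ld n$ of these vertices, so the bounded differences have size at most $O(n^{\gamma+\beta+o(1)})$ across $M_u\le n^{1-\gamma}$ independent revelations. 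Azuma then yields $\Plr[N<\Elr[N]/2]\le\exp\{-cn^{1-\gamma-2\beta}\}$, which is far smaller than the target $c_2\exp\{-c\ln^3 n\}$ coming from the well-behaved-trajectories event; this completes \eqref{eq:lotgood}.
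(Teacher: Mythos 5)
Your route is genuinely different from the paper's: the paper introduces $h$-proper vertices (Definition~\ref{def:proper}), whose condition (ii) gives a two-scale control that makes the bridge-robustness step purely deterministic (the string/congestion argument of Lemma~\ref{lm:lotstrings}, which only uses the \emph{total number} of vertices the bridges occupy), and it applies McDiarmid only to the i.i.d.\ segments; you instead keep the bridge randomness, estimate the expected damage to $\comp_y(1_{B_y}\cdot\xi_u)$ conditionally on the segments using mixing of the bridges, and then do concentration over segments and bridges jointly. This could in principle work, but as written it has genuine gaps.

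First, the key conditional estimate $\Plr\bigl[v\in\bigcup_{i,j}\Ran Z^{i,j}\mid\text{segments}\bigr]\le c n^{-\gamma}\ln^3 n$ ``uniformly in $v\in V$'' is false: the bound \eqref{eq:I} only controls $P^\ell_{b_i,a_{i+j}}[X_t=v]$ for $t$ and $\ell-t$ of order $\ln n/\gap$, while near its endpoints the bridge visits $b_i$, $a_{i+j}$ and their neighbourhoods with probability of order one. This is not a boundary technicality here, because your damage functional weights each hit vertex $v$ by the size of the subtree of $C_y$ it disconnects: if some endpoint happens to lie next to the root region of $C_y$, the unmixed initial portion of that bridge destroys a constant fraction of $C_y$ with conditional probability bounded below, so the claimed bound on $\Elr[|D_y|\mid\text{segments}]$ does not hold uniformly on $E_y$. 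Since your whole estimate is conditional on the segments (which determine both the endpoints and $C_y$), you cannot simply average this away; you would need an additional hitting estimate for the unmixed portions (a drift bound in the tree-like ball, in the spirit of Lemmas~\ref{lm:hit} and~\ref{lm:mix}) together with control of how many endpoints fall near $B_y$, i.e.\ a further good-segment event. Second, the Azuma step is not sound as stated: (a) your $N$ counts vertices whose \emph{global} component is large, so changing one group is not confined to $y$'s within distance $\beta\ld n$ of the touched vertices --- one occupied vertex can cut a long component and flip the indicator for arbitrarily many distant $y$'s, so the deterministic bounded difference is not $O(n^{\gamma+\beta+o(1)})$; this is exactly why the paper formulates properness through the local quantity $\comp_x^{\beta\ld n}$, and you should likewise count $y$'s with $|\comp_y(1_{B_y}\cdot\xi'_u)|$ large (which is what your first-moment step actually bounds); (b) ``restricting to the event of Lemma~\ref{lm:visits}'' before applying a bounded-differences inequality is not legitimate without an argument --- the paper's truncation $\check\xi$ (using only the first $2n^\gamma$ jumps) is the standard fix; (c) the groups $(Y^i,Z^{i,1},\dots,Z^{i,\lfloor\ln n\rfloor})$ are not independent across $i$, since $Z^{i,j}$ has endpoint $a_{i+j}$, so one must either reveal segments first and bridges afterwards or encode the bridges through independent auxiliary variables. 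All of these are repairable, but they are precisely the points where the paper's proper-vertex/string construction does real work, and your proposal currently leaves them open.
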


The proof of the proposition has two parts. First, in
Lemma~\ref{lm:proper}, we establish a similar result for the
configuration $\xi_{u}$ defined in \eqref{eq:xi} as the indicator of the
complement of the segments. We then show
that many of them survive adding the long-range bridges which will prove
the Proposition~\ref{pr:lotgood}.

\subsubsection{Robust mesoscopic components for $\xi_u$}

In order to ensure that adding the long-range bridges does not destroy
the components of $\xi_u$ of size $c_1 n^{\vv_{u(1+\epsilon)} \beta}$, we
should make them more robust. We therefore impose the following more
restrictive conditions on the components to be found.
\begin{definition}
  \label{def:proper}
  Let  $\eta$ be a configuration in  $\{0,1\}^V$ and set for
  $l\in \mathbb N$
  \begin{equation}
    \label{eq:Cxl}
    \comp_x^l(\eta )=\{y\in \partial_i B(x,l):
      y \text{  is connected to $x$ by a path in
        $\comp_x(\eta) \cap B(x,l)$}\}.
  \end{equation}
  (Note that $\comp_x^l(\eta)$ is contained in, but not necessarily equal to
  $\comp_x(\eta) \cap \partial_i B(x,l)$.)
  Given a positive parameter $h$, a
  given site $x \in V$ is said to be $h$-\emph{proper} under the
  configuration $\eta$, if $\tx(B(x,3 \beta \ld n)) = 0$ and the
  following two conditions hold (recall \eqref{eq:muvu}):
  \begin{equation}
    \label{eq:proper}
    \begin{split}
      \text{(i) } &
      |\comp_x^{\beta \ld n} (\eta)| \geq h \,
      m_{u(1+\epsilon)}^{\beta \ld n} = h  n^{\vv_{u(1+\epsilon)}\beta},\\
      \text{(ii) } &
      \text{$|\comp_y^l (\eta)| \leq  m_{u(1-\epsilon)}^{(5/4) l}$
        for all $y \in B(x,\beta \ld n)$,
        $l \in \big[l_0 , l_1 \big]\cap \mathbb N$,}\\
      &\text{where $l_0= \lceil 10\ln\ln n / \vv_{u(1-\epsilon)} \rceil$,
        $l_1= \lfloor \beta \ld n \rfloor$.}
    \end{split}
  \end{equation}
\end{definition}

The next lemma proves the existence of many proper sites.

\begin{lemma}
  \label{lm:proper}
  For $\beta$, $\varepsilon $ as in \eqref{e:newgamma},\eqref{eq:epsilon} there
  exist constants $c_3(u)$, $c_4( u)$ and
  $c(\good,u)$ such that
  \begin{equation}
    \label{eq:lmproper}
    \Plr \Big[ \big|\{x \in V: x \textnormal{ is $c_3$-proper under $\xi_{u}$}\}
      \big| \geq c_4 n \Big] \geq 1 - c \exp \{- \ln^3 n\}.
  \end{equation}
\end{lemma}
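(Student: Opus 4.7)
The plan is to combine Proposition~\ref{pr:dominat} (which couples the vacant configuration locally to cluster $\comp_o$ under $\Ber_u$, equivalently to the interlacement cluster on $\mathbb T_d$) with classical facts about supercritical Galton--Watson trees, and then boost an expected count to a high-probability count using a bounded-differences concentration inequality applied to the i.i.d.\ segments $(Y^i)_{i< M_u}$.

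\textbf{Step 1: most sites have tree-like $6\beta\ld n$-neighbourhoods.} Apply Lemma~\ref{l:txzero} with $\Delta=\txconst\ld n-6\beta\ld n=\Omega(\ld n)$ (since $\beta=\txconst/100$) to conclude that, apart from a fraction $\le(d-1)^{-\Delta}=n^{-c}$, every vertex $x$ satisfies $\tx(B(x,6\beta\ld n))=0$. Since $B(y,5\beta\ld n)\subset B(x,6\beta\ld n)$ for every $y\in B(x,\beta\ld n)$, this guarantees the hypothesis $\tx(B'_y)=0$ of Proposition~\ref{pr:dominat} simultaneously at every $y$ in the $\beta\ld n$-ball around~$x$.

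\textbf{Step 2: pointwise probability bound.} For a tree-like $x$, use Proposition~\ref{pr:dominat} at $y=x$ to couple $\comp_x(1_{B_x}\xi_{u})$ from below by $\comp^{u(1+\epsilon)}\cap\mathbb B$, and at each $y\in B(x,\beta\ld n)$ to couple $\comp_y(1_{B_y}\xi_{u})$ from above by $\comp^{u(1-\epsilon)}$. By Theorem~\ref{l:interbr}, these are clusters of Galton--Watson trees with binomial$(d-1,p_{u(1\pm\epsilon)})$ offspring, which by \eqref{eq:epsilon} are both supercritical ($m_{u(1\pm\epsilon)}>1$). For condition (i) of~\eqref{eq:proper}, Kesten--Stigum (or a direct second-moment argument) yields
\begin{equation*}
\Plr\bigl[|\comp^{u(1+\epsilon)}\cap\text{gen }\beta\ld n|\ge c_3 m_{u(1+\epsilon)}^{\beta\ld n}\bigr]\ge 2\rho
\end{equation*}
for constants $c_3,\rho>0$ depending only on $d,u$. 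For condition (ii), the generation-$l$ size $Z_l$ of the $\comp^{u(1-\epsilon)}$-process satisfies $\Elr[Z_l]=m_{u(1-\epsilon)}^l$, and the binomial offspring has exponential moments, giving a Cramér-type bound $\Plr[Z_l>m_{u(1-\epsilon)}^{(5/4)l}]\le\exp(-c\,m_{u(1-\epsilon)}^{l/4})$; by the choice $l_0=\lceil 10\ln\ln n/\vv_{u(1-\epsilon)}\rceil$ this is $\le\exp(-c(\ln n)^{c'})$, which is summable over $y\in B(x,\beta\ld n)$ (at most $n^{\beta}$ vertices) and $l\in[l_0,l_1]$. Since Proposition~\ref{pr:dominat} has an error term $n^{-2\beta}$, also summable in the same way, we conclude that a fixed tree-like $x$ is $c_3$-proper under $\xi_{u}$ with probability at least $\rho$, so
\begin{equation*}
\Elr\bigl[\,|\{x:x\text{ is }c_3\text{-proper under }\xi_{u}\}|\,\bigr]\ge \rho n-o(n).
\end{equation*}

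\textbf{Step 3: concentration via bounded differences.} The count $F:=|\{x:x\text{ is }c_3\text{-proper under }\xi_{u}\}|$ is a function of the i.i.d.\ segments $(Y^0,\dots,Y^{M_u-1})$. Work on the event $\Gamma=\bigcap_{i<M_u}\{|\Ran Y^i|\le 3n^\gamma\}$, whose complement has probability $\le M_u e^{-cn^\gamma}$ by Lemma~\ref{lm:visits}. On $\Gamma$, replacing a single $Y^i$ can only affect the properness status of vertices $x$ with $B(x,3\beta\ld n)\cap(\Ran Y^i\cup\Ran \tilde Y^i)\ne\varnothing$, and the number of such $x$ is at most $c\,n^{\gamma}(d-1)^{3\beta\ld n}\le c\,n^{\gamma+3\beta}$. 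McDiarmid's inequality (applied conditionally on $\Gamma$, the off-$\Gamma$ mass being negligible) then yields
\begin{equation*}
\Plr[F<\rho n/2]\le \exp\!\bigl(-c\rho^2 n^2/(M_u\cdot n^{2(\gamma+3\beta)})\bigr)+\Plr[\Gamma^c]\le \exp(-c n^{1-\gamma-6\beta}),
\end{equation*}
which by $\gamma<\beta/2$ and $\beta<1/100$ is much smaller than $\exp(-\ln^3 n)$. Setting $c_4=\rho/2$ gives~\eqref{eq:lmproper}.

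The main obstacle is Step~2, specifically verifying that the natural $(5/4)$ power in condition~(ii) is large enough for the exponential-moment deviation bound to beat the $n^\beta\cdot\ld n$ union bound over $(y,l)$ in the $\beta\ld n$-ball. The survival/size bound in~(i) and the concentration step are comparatively routine, the latter only requiring careful bookkeeping of the Lipschitz constant of $F$ with respect to a single segment.
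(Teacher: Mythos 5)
Your proposal is correct and follows essentially the same route as the paper: a pointwise bound $\Plr[x \text{ is proper}]\ge c_u$ via Proposition~\ref{pr:dominat} combined with supercritical branching-process growth for condition (i) and a uniform exponential-moment (Cram\'er-type) tail bound for condition (ii) with a union over the $n^\beta$ vertices of $B(x,\beta\ld n)$, then Lemma~\ref{l:txzero} to get a linear expected count, then McDiarmid's bounded-differences inequality over the i.i.d.\ segments with Lipschitz constant of order $n^{\piece+O(\beta)}$. The only (immaterial) difference is in the concentration step, where you condition on the event that every segment makes few jumps, while the paper instead truncates each $Y^i$ to its first $\lfloor 2n^\piece\rfloor$ jumps to define a modified configuration $\check\xi$ and bounds $\Plr[\xi_u\neq\check\xi]$; both devices serve the same purpose.
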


\begin{proof}
  We first show with Proposition~\ref{pr:dominat} and estimates on
  branching processes that the expected number of proper vertices is of
  order $n$ and then we show that this number is concentrated around its
  expectation. Throughout this proof, we abbreviate $m_{u(1+\epsilon)}$ and
  $m_{u(1-\epsilon)}$ by $m_+$ and $m_-$.

  Consider  $x \in V$ such that $\tx(B(x, \good \ld n)) = 0$. We estimate
  the probability that $x$ satisfies condition \eqref{eq:proper}(i) with
  $h>0$ to be chosen. Since $5 \beta < \good$, using
  Proposition~\ref{pr:dominat} with $u(1+\epsilon) < (u+u_\star)/2$
  (cf.~\eqref{eq:epsilon}),
  \begin{equation}
    \label{eq:bigpop}
    \begin{split}
      \Plr \big[
        |\comp_x^{\beta \ld n} (\xi_u)| < h m_+^{\beta \ld n}\big]
      &\leq \Ber_{u(1+\epsilon)}\big[|\comp_o^{\beta \ld n}|
        < h m_+^{\beta \ld n}\big]
      + c_{ u} n^{-2\beta}.
    \end{split}
  \end{equation}
  For $u(1+\epsilon) < (u+ u_\star)/2$, the branching
  process induced by $\Ber_{u(1+\epsilon)}$ is supercritical. Hence,
  \cite[Theorem~2,~p.~9]{AN} implies that for $h < c_{ u}$ chosen small
  enough, the first term on the right-hand side of \eqref{eq:bigpop} is
  bounded by $1-c_{ u}$ for $n \geq c'_{u}$. Therefore, letting $h$ be some
  strictly positive constant $c_3 < c_{ u}$,
  \begin{equation}
    \label{eq:biginx}
    \sup_{n \geq c_{u}}
    \Plr \big[ |\comp_x^{\beta \ld n}(\xi_u)| < c_3
      m_+^{\beta \ld n}\big] < 1 - c_{ u}.
  \end{equation}
  We now treat condition \eqref{eq:proper}(ii). Since $m_u > 1+c_u$, we can use
  \cite[Theorem 4]{KA} to find a $\theta_u \in (0,c_u)$ such
  that (here, $\EBer_u$ denotes $\Ber_u$-expectation)
  \begin{equation}
    \label{eq:ldp}
    H_u = \sup_{n}\, \sup_{l\geq 0} \EBer_u
    \Big[ \exp \Big\{ \theta_u \frac{ |\comp_y^l| } {m^l_{u}} \Big\} \Big] < \infty.
  \end{equation}
  We claim that for $l_0$ and $l_1$ as in \eqref{eq:proper}, and $\epsilon$
  as in \eqref{eq:epsilon},
  \begin{equation}
    \label{eq:tamed}
    \Ber_{u(1-\epsilon)} \Big[
      |\comp^l_o| > m_-^{(5/4)l} \text{ for some } l \geq l_0
      \Big] \le  c_{u} \exp \{-c'_{u} \ln^2n \}.
  \end{equation}
  Indeed, by the exponential Chebyshev
  inequality, the left-hand side of \eqref{eq:tamed} can be bounded from
  above by {\allowdisplaybreaks
    \begin{align}
      \label{eq:Qtamed}
      & \sum_{l = l_0}^\infty
      \Ber_{{u(1 - \epsilon)}}\Big[
        \exp\Big\{\theta_{{u(1-\epsilon)}} \frac{|\comp^l_o|}{m_{-}^l} \Big\}
        > \exp \Big\{\theta_{{u}} m_{-}^{(1/4) l }
          \Big\}\Big] \\
      & \overset{\eqref{eq:ldp}}{\leq}
      \sum_{l = l_0}^\infty
      H_{{u(1-\epsilon)}} \exp \Big\{ -\theta_{{u(1-\epsilon)}} m_{-}^{(1/4) l } \Big\} \nonumber\\
      &\leq H_{{u(1-\epsilon)}}
      \exp \Big\{ - \theta_{{u(1-\epsilon)}}
        \exp \big\{
          \lceil \tfrac{10\ln\ln n}{ \ld m_{-}} \rceil \frac{1}{4} (\ld m_{-}) (\ln d)  \big\} \Big\}
      \sum_{k=0}^\infty
      e^{-\theta_{{u(1-\epsilon)}} m_{-}^{l_0}(m_{-}^{\frac{1}{4}k}-1)} \nonumber \\
      & \stackrel{(\ln d \geq 1)}{\leq} c_{u} \exp \{ - c'_{u} \ln^2 n\}. \nonumber
  \end{align}}
  This proves \eqref{eq:tamed}. It follows that
  \begin{equation}
    \label{eq:branch} \begin{split}
      \Plr & \Big[
        |\comp_y^l(\xi_u)| >  m_{-}^{(5/4) l} \text{ for some }
        y \in B(x,\beta \ld n), l_0\le l\le l_1 \Big] \\
      & \overset{Prop.~\ref{pr:dominat}}{\leq} c
      n^\beta \bigg\{ \Ber_{u(1-\epsilon)}
        \big[ |\comp^l_o| > m_{-}^{(5/4)l}
          \text{ for some } l \geq l_0 \big] + c_{ u} n^{-2\beta} \bigg\} \overset{\eqref{eq:tamed}}{\leq} c_{ u} n^{-\beta}.
    \end{split}
  \end{equation}
  The above bound, together with \eqref{eq:biginx}, allows us to conclude
  that for all $n \geq c'_{u}$,
  $\Plr\big[x \text{ is $c_3$-proper under $\xi_{u}$}\big] \geq c_{u}>0$.
  Summing this probability over the vertices $x$ with
  $\tx(B(x,3\beta \ld n))=0 $ (which have positive proportion by
    Lemma~\ref{l:txzero}) we obtain that
  \begin{equation}
    \label{eq:EPproper}
    \Elr[|\{x \in V: \text{$x$ is $c_3$-proper under $\xi_{u}$} \}|]
    \geq c_{ u} n.
  \end{equation}

  \smallskip

  We now show that the number of $c_3$-proper points concentrates around
  its expectation. To this end we use a concentration inequality in
  \cite{M89}, Lemma~1.2. We first consider a slightly modified
  configuration $\check \xi \in \{0,1\}^V$, where we consider only the
  first $[2n^\piece]$ jumps of each $Y^i$:
  \begin{equation}
    \check \xi = \check \xi \big(Y^0, Y^1, \dots, Y^{M_{u} -1}\big)
    = 1\Big\{ V \setminus \bigcup_{i < M_{u}}Y^i_{[0,\tau_{[2n^\piece]}(Y^i) \wedge L]}
      \Big\},
  \end{equation}
  where $\tau_{k}(Y^i)$ is the time of the $k$-th jump of $Y^i$
  (we set $\tau_k(Y_i)=L$ if $Y^i$ jumps less than $k$-times). We define
  a function
  \begin{equation}
    f(Y^0,\dots,Y^{M_{u} -1}) = \big|
    \big\{x \in V: x\text{ is $c_3$-proper under }\check \xi \big\} \big|.
  \end{equation}
  We claim that, writing ${\vec{Y}}$ for $(Y^0,\dots,Y^{M_{u} -1})$,
  \begin{equation}
    \label{eq:fhasb}
    \begin{array}{l}
      \text{if ${\vec{Y}}$ and ${\vec{Y}}'$
        differ in at most one coordinate, then }
      |f({\vec{Y}})-f({\vec{Y}}')| \leq cn^{\piece + \beta}.
    \end{array}
  \end{equation}
  Indeed, changing one  segment $Y^i$,  we can change at most $2n^\piece$
  values of $\check \xi$. Moreover, the event that a given point $x \in V$
  is $c_3$-proper under $\check \xi$ only depends on the values of
  $\check \xi$ in $B(x,\beta \ld n)$, which has volume bounded by $cn^\beta$.
  This gives \eqref{eq:fhasb}. Note that
  \begin{equation}
    \begin{split}
      \Elr[f] & \geq \Elr[f 1_{\xi_{u} = \check \xi}]
      = \Elr \big[  |\{x \in V: x\text{ is $c_3$-proper under } \xi_{u} \}|
        \cdot 1_{\xi_{u} = \check \xi}\big] \\
      & \overset{\eqref{eq:EPproper}}{\geq} c_{u} n - n \cdot
      \Plr[\xi_{u} \neq \check \xi], \textrm{ for $n \geq c_{u}$.}
    \end{split}
  \end{equation}
  The bound \eqref{eq:Wj} implies that
  \begin{align}
    \Plr[\xi_{u} \neq \check \xi] &\leq M_u P [N_L \geq 2n^\piece] \leq \exp \{-c_u n^\piece\}.
  \end{align}
  Hence, we have that $\Elr[f]  \geq c_{u} n$ for $n \geq c'_{u}$.
  Setting $t = \frac{1}{2} c_{ u} n$, with the same constant as in the
  lower bound on $\Elr[f]$, we obtain
  \begin{equation}
    \begin{split}
      \Plr[|\{x & \in V: \text{ is $c_3$-proper under } \xi_{u} \}| \leq t]
      \leq \Plr[\xi_{u} \neq \check \xi] + \Plr[ \Elr[f] -f \geq  t] \\
      & \leq  \exp \{-c_u n^\piece\} + 2\exp\{ -c_{ u} n^{2 - 1 + \piece - 2(\piece + \beta)} \},
    \end{split}
  \end{equation}
  where we have used Lemma~1.2 in \cite{M89}, together with
  \eqref{eq:fhasb}, in the last inequality. Since
  $1 - \piece - 2\beta \geq 1 - 3\beta > 0$, this estimate is more than
  enough to imply \eqref{eq:lmproper} for appropriately chosen constants
  $c_4$ and $c$. This concludes the proof of Lemma~\ref{lm:proper}.
\end{proof}

\subsubsection{Robustness of proper sites}
In this sub-section we prove that the components around $h$-proper sites
(as in Definition~\ref{def:proper}) are really robust with respect to
perturbation. Observe that the following lemma is completely
deterministic.

\begin{lemma}
  \label{lm:lotstrings}
  Let $\beta $, $\varepsilon $ be as in \eqref{eq:epsilon},
  \eqref{e:newgamma} and define
  the class $\Xi$ of configurations in $\{0,1\}^V$,
  \begin{equation}
    \label{eq:Xi}
    \Xi = \big\{\eta \in \{0,1\}^V:
      \big|\{x \in V: x \textnormal{ is $c_3$-proper}\}\big| \geq c_4 n
      \big\}.
  \end{equation}
  Let $\eta \in \Xi$ and $\eta' \in \{0,1\}^V$ be such that
  $\eta'(z) \neq \eta(z)$ for at most $n^{1-\piece}\ln^5 n $ vertices
  $z\in V$. Then there exists a constant $c(\good, u)$, such that
  \begin{equation}
    \label{eq:iflots}
    \big|\{x \in V: |\comp_x(\eta')| \geq cn^{\vv_{u(1+\epsilon)} \beta} \}\big|
    \ge   cn.
  \end{equation}
\end{lemma}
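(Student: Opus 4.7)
My plan is to reduce the lemma to a Markov-style count of "damaged" proper sites, exploiting the tree structure guaranteed by $\tx(B(x,3\beta\ld n))=0$ together with a double use of condition (ii) of $c_3$-properness. Let $A=\{x:x\text{ is $c_3$-proper under }\eta\}$ (so $|A|\ge c_4n$), $F^-=\{z:\eta(z)=1,\eta'(z)=0\}$, $v_\pm=v_{u(1\pm\epsilon)}$, $m_\pm=m_{u(1\pm\epsilon)}$, and $N=n^{1-\gamma}\ln^5n$, so $|F^-|\le N$. Only $F^-$-flips can shrink a vacant cluster, hence for $x\in A\setminus F^-$ one has $|\comp_x(\eta')|\ge|\comp_x^{\beta\ld n}(\eta')|\ge|\comp_x^{\beta\ld n}(\eta)|-D(x)$ with $D(x):=|\comp_x^{\beta\ld n}(\eta)\setminus\comp_x^{\beta\ld n}(\eta')|$, so by condition (i) of $c_3$-properness the goal reduces to showing $D(x)<c_3n^{v_+\beta}/2$ for all but $o(n)$ sites of $A$.

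The tree structure is what controls $D(x)$: because $B(x,\beta\ld n)$ is a tree, each $y\in\comp_x^{\beta\ld n}(\eta)$ has a unique path to $x$ inside the ball, which must be all vacant in $\eta$, and $y$ is lost in $\eta'$ only if that path meets some $z\in F^-$. A union bound over such obstructions gives $D(x)\le\sum_z\Delta(x,z)$, where the sum runs over $z\in F^-\cap B(x,\beta\ld n)$ whose tree-path to $x$ in the ball is vacant in $\eta$, and $\Delta(x,z)=|\comp_z^{\beta\ld n-\dist(x,z)}(\eta)|$ bounds the descendants of $z$ "behind" $z$ from $x$'s vantage. I then swap the sums over $x\in A$ and $z\in F^-$ and parametrise the eligible $x$'s by $d=\dist(x,z)$; the same tree-path characterisation read from $z$'s side identifies the set of such $x$ at distance $d$ with a subset of $\comp_z^d(\eta)$, yielding
\begin{equation*}
  \sum_{x\in A}D(x)\le\sum_{z\in F^-}\sum_{d=0}^{\lfloor\beta\ld n\rfloor}|\comp_z^d(\eta)|\cdot|\comp_z^{\beta\ld n-d}(\eta)|.
\end{equation*}

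The crucial step will be a double application of condition (ii) of $c_3$-properness to the vertex $z$ itself, which is legitimate because every contributing $z$ lies in $B(x,\beta\ld n)$ for some proper $x$. Condition (ii) then bounds both factors by $m_-^{(5/4)\cdot\text{level}}$ whenever the level lies in $[l_0,l_1]$; when $d$ and $\beta\ld n-d$ both fall in this range, the product collapses to $m_-^{(5/4)\beta\ld n}=n^{(5/4)v_-\beta}$, and for the two narrow fringes $d<l_0$ or $\beta\ld n-d<l_0$ the trivial tree bound $|\comp_z^l|\le(d-1)^l$ only costs a $(\ln n)^{O(1)}$ factor since the other level is then at least $\beta\ld n-O(\ln\ln n)$. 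Summing over $d$ and $z$, this is expected to give $\sum_{x\in A}D(x)\le c\,n^{1-\gamma+(5/4)v_-\beta}(\ln n)^{O(1)}$.

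A Markov inequality with threshold $c_3n^{v_+\beta}/2$ and the substitution $\gamma=v_+\beta/2$ from \eqref{e:newgamma} then bounds the number of damaged proper sites by $c\,n^{1-\beta((3/2)v_+-(5/4)v_-)}(\ln n)^{O(1)}$, which is $o(n)$ precisely because of the strict positivity in \eqref{e:epsv}. Discarding also the at most $N=o(n)$ proper sites that lie in $F^-$ still leaves at least $c_4n/2$ undamaged sites, each with $|\comp_x(\eta')|\ge c_3n^{v_+\beta}/2$, proving the lemma with $c=\min(c_3,c_4)/2$. The only delicate point I foresee is the double use of (ii) at a single vertex $z$ (once to count the proper $x$'s at each distance $d$, once to bound the descendants at depth $\beta\ld n-d$): the choices $\gamma=v_+\beta/2$ and the tolerance in \eqref{eq:epsilon} are calibrated so that the margin $(3/2)v_+-(5/4)v_->0$ suffices to drive the Markov bound below $n$, even when $v_+$ is very small (as it is for $u$ close to $u_\star$).
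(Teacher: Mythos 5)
Your plan is correct and is essentially the paper's own argument: the key step in both is the congestion bound $\sum_{l=0}^{l_1}|\comp_z^l(\eta)|\,|\comp_z^{l_1-l}(\eta)|\le (\ln n)^{O(1)}n^{\frac54 \vv_{u(1-\epsilon)}\beta}$ at any vertex $z$ lying in the $\beta\ld n$-ball of a proper site (via condition (ii) of properness, with the trivial $(d-1)^l$ bound on the fringe $l<l_0$), multiplied by the number $n^{1-\piece}\ln^5 n$ of flipped vertices and compared with $n^{1+\vv_{u(1+\epsilon)}\beta}$ through $\piece=\vv_{u(1+\epsilon)}\beta/2$ and \eqref{e:epsv}. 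The only cosmetic difference is packaging: the paper counts an explicit family of ``strings'' (the unique tree geodesics from proper sites, determined by their endpoints) and pigeonholes over the surviving ones, while you count lost boundary vertices $D(x)$ and apply Markov's inequality — the same double count.
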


\begin{proof}
  In this proof, we use the word ``proper'' to mean ``$c_3$-proper under
  $\eta $'' and use $m_+$, $m_-$, $v_+$ and $v_-$ to abbreviate
  $m_{u(1+\epsilon)}$ , $m_{u(1-\epsilon)}$, $v_{u(1+\epsilon)}$ and
  $v_{u(1-\epsilon)}$. We will use the term \emph{string} to refer to a
  self-avoiding path on $V$ with length $l_1=\lfloor \beta \ld n \rfloor$,
  as in \eqref{eq:proper}. For $\eta \in \Xi$, we are going to choose a
  particular collection $\Gamma_\eta$ of strings, which will be contained
  in $\supp \eta $, as   follows. First, we take a collection of proper
  vertices $\Pi = \{x_1, \dots, x_{\lfloor c_4 n \rfloor}\} \subseteq V$,
  according to some pre-defined order. Again using some arbitrary order,
  for each $l\le\lfloor c_4 n \rfloor$,  we insert into $\Gamma_\eta$
  $\lfloor c_3 n^{\vv_+ \beta} \rfloor$ distinct strings starting at
  $x_l \in \Pi$ and contained in $\supp \eta $. Such a collection exists
  due to \eqref{eq:proper}($i$) (see also \eqref{eq:Cxl}). Denoting by
  $|\Gamma_\eta |$ the number of strings in $\Gamma_\eta $, we have
  \begin{equation}
    \label{e:gamasize}
    |\Gamma_\eta|
    = \lfloor c_4 n \rfloor \cdot \lfloor c_3 n^{\vv_+ \beta}\rfloor.
  \end{equation}
  Since for all $l \leq \lfloor c_4 n \rfloor$, $B(x_l,2\beta \log n)$ has
  tree excess zero,
  \begin{equation}
    \label{eq:endpoints}
    \text{every string in $\Gamma_\eta$ is uniquely determined by its end-points}.
  \end{equation}
  Let $S_y$ be the number of strings in $\Gamma_\eta $ intersecting $y$.
  We claim that, for any given $y\in V$,
  \begin{equation}
    \label{eq:congestion}
    S_y \le  c_{u}(\log n)^{c'_{u}} \cdot n^{\frac{5}{4}v_- \beta}.
  \end{equation}
  To show this claim, observe that the fact that the starting point of
  every string in $\Gamma_\eta$ is proper together \eqref{eq:proper}(ii)
  imply that if there is a string intersecting $y$, then
  \begin{equation}
    \label{eq:ytamed}
    |\comp_y^l(\eta)| \leq  m_-^{(5/4) l} \text{ for every
      integer } l \in [l_0,l_1].
  \end{equation}
  We bound $S_y$ by splitting the set of strings intersecting $y$ in the
  following way:
  \begin{equation}
    S_y = \sum_{l=0}^{l_1} \#\{ \text{strings in
        $\Gamma_\eta$ intersecting $y$ and starting at distance $l$ from
        $y$}\}.
  \end{equation}
  Since the strings are contained in $\supp \eta $, using \eqref{eq:endpoints}, for
  $n \geq c_{u}$,we obtain
  \begin{equation}
    S_y \leq \sum_{l=0}^{l_1 }
    |\comp_y^l(\eta)| |\comp_y^{l_1 -l}(\eta) |
    \le 2 \sum_{l=0}^{l_0-1}|\comp_y^l(\eta)|
    |\comp_y^{l_1 -l}(\eta) |
    + 2  \sum_{l=l_0}^{l_1/2 }  |\comp_y^l(\eta)|
    |\comp_y^{l_1 -l}(\eta) |.
  \end{equation}
  Using \eqref{eq:ytamed}, the bound $|\comp_y^k|\le c (d-1)^k$
  for $k< l_0$, and   $l_0\le c_u \ln \ln n$, we get
  \begin{equation}
    S_y \le c_u \ln \ln n \cdot  (d-1)^{l_0} m_-^{(5/4) l_1}
    + 2 \sum_{l=0}^{l_1/2} m_-^{(5/4) l}
    m_-^{(5/4) (l_1 -l)}.
  \end{equation}
  From \eqref{eq:muvu}, it follows that  $m_-^{l_1} \le n^{v_- \beta }$.
  Hence,
  \begin{equation}
    S_y \le c_{u}(\ln n)^{c'_u}
    n^{\frac{5}{4}\vv_- \beta}
    + 2 \sum_{l=0}^{l_1 /2} n^{\frac{5}{4}\vv_- \beta }
    \leq c_{u}(\ln n)^{c'_{u}} \cdot n^{\frac{5}{4}v_- \beta}.
  \end{equation}
  This proves \eqref{eq:congestion}.

  Our next step is to show that there exists $c_{u}$ such that
  \begin{equation}
    \label{eq:lotstrings}
    \text{for $n \geq c_{u}$, at least half of the strings of
      $\Gamma_\eta $ are contained in $\supp \eta '$}.
  \end{equation}
  Indeed, we know that $\eta'(z) \neq \eta(z)$ for at most
  $n^{1-\piece }\ln^5 n $ vertices $z\in V$. This, together with
  \eqref{eq:congestion}, implies that   at most
  $c_{u}(\ln n)^{c'_{u}}n^{1-\piece+\frac 54 v_- \beta}$   strings in
  $\Gamma_\eta$ are not contained in $\supp \eta '$. Since
  $\piece = \vv_+ \beta/2$ (cf.~\eqref{e:newgamma}), we obtain by
  \eqref{e:epsv} that $1-\piece+\frac54 v_- \beta < 1+\vv_+ \beta$.
  Therefore, due to \eqref{e:gamasize}, for $n\ge c''_{u}$, at least half
  of the strings in $\Gamma_\eta$ are contained in $\supp \eta '$. This
  gives us \eqref{eq:lotstrings}.

  Let us recall that in the construction of the set $\Gamma_\eta$, we have
  chosen a collection $\Pi$ of $\lfloor c_4 n \rfloor$ proper vertices in
  $V$, and for each of these vertices, we have picked
  $\lfloor c_3 n^{\vv_+ \beta} \rfloor$ strings starting at $x_l$. We claim
  that
  \begin{equation}
    \label{eq:1over8}
    \begin{array}{c}
      \text{for $n \geq c_{u}$, at least
        $\lfloor \frac{c_4}{8} n \rfloor$ of the vertices in $\Pi$ have} \\
      \text{at least $\frac{c_3}{8} n^{\vv_+ \beta}$ of their strings contained in
        $\supp \eta '$}.
    \end{array}
  \end{equation}
  Indeed, otherwise the number of strings in $\Gamma_\eta$ contained in
  $\supp \eta'$ would be bounded by
  \begin{align}
    \frac{c_4}{8} n \cdot c_3 n^{v_+ \beta}
    + \frac{7 c_4}{8} n \cdot \frac{c_3}{8} n^{v_+ \beta}
    \leq \frac{c_3 c_4}{4} n^{1+v_+ \beta},
  \end{align}
  contradicting \eqref{eq:lotstrings} for $n \geq c_{u}$.

  Since distinct strings starting on a vertex in $\Pi$ have distinct end
  points, each vertex $x$ as in \eqref{eq:1over8}  satisfies
  $|\comp_{x}(\eta')|\geq \frac{c_3}8 n^{\vv_+ \beta}$. Choosing $c$
  as $\frac{c_3 \wedge c_4}{8}$, we deduce \eqref{eq:iflots}   for
  $n \geq c_{u}$. By possibly decreasing $c = c_{u}$ in such a way that
  $\lfloor c n \rfloor = 0$ for the other finitely many values of $n$, we
  obtain Lemma~\ref{lm:lotstrings}, and thus complete the proof of
  Proposition~\ref{pr:lotgood}.
\end{proof}

\subsubsection{Mesoscopic components for $\xi'_u$}
With Lemmas~\ref{lm:proper} and~\ref{lm:lotstrings}, we have all tools to
finish the proof of Proposition~\ref{pr:lotgood} stating the existence of
many mesoscopic components of the complement of the segments and the
long-range bridges.
\begin{proof}[Proof of Proposition~\ref{pr:lotgood}]
  The configurations $\xi_u$ and
  $\xi_u '$ differ only on vertices visited by the bridges. Hence, setting
  $\mathcal D=|\{z\in V:\xi_{u} (z)\neq \xi'_{u} (z)\}|$, denoting by
  $N^{i,j}$ the number of jumps of the bridge $Z^{i,j}$, for
  $n\ge e^{u_\star}$, and using $M_u \le c_u n^{1-\piece }$, we obtain
  \begin{equation}
    \label{eq:xixi'}
    \begin{split}
      \Plr \big[\mathcal D > n^{1-\piece }\ln^5 n\big]
      &\le
      \Plr \Big[\sum_{i<M_{u},j\le \ln n} N^{i,j}
        > c_u n^{1-\piece}\ln^4 n\Big]\\
      &\le \Plr \Big[ \bigcup_{i \leq M_{u}, j \leq  \ln n } \{ N^{i,j} >
          \ln^3 n \} \Big] \leq c_{ u} \exp\{-c \ln^3 n\},
    \end{split}
  \end{equation}
  where we have used \eqref{eq:Wijlog3} from Lemma~\ref{lm:visits} in the last inequality.
  By
  possibly increasing $c_{ u}$ we conclude that the equation above holds
  for every $n \geq 1$. Finally, taking $c$ as in
  Lemma~\ref{lm:lotstrings}, using Lemma~\ref{lm:lotstrings},
  \begin{equation}
    \begin{split}
      \Plr \Big[ & \big| \{x\in V: |\comp_x(\xi'_{u})| \geq c n^{\vv_{u(1+\epsilon)} \beta} \}\big| <  c n  \Big] \\
      &\leq  \Plr \Big[ \big|\{x : x \textnormal{ is $c_3$-proper under $\xi_{u}$}\} \big| < c_4 n \Big]
      + \Plr \left[ \mathcal D> n^{1-\piece }\ln^5 n
        \right].
    \end{split}
  \end{equation}
  By Lemma~\ref{lm:proper} and \eqref{eq:xixi'} the last expression is
  bounded by $ c_{u} e^{-c \ln^3 n}$. The proof of
  Proposition~\ref{pr:lotgood} is then finished by choosing the constants
  $c_1$ and $c_2$ appropriately.
\end{proof}

\subsection{Proof of Theorem~\ref{th:usmall} and sprinkling.}
\label{ss:proof}

We can now approach the second part of the proof of Theorem~\ref{th:usmall},
that is the sprinkling construction.

\begin{proof}[Proof of Theorem~\ref{th:usmall}]
  Let $u$ be as in the theorem and choose $\delta >0$ such that
  \begin{equation}
    2 \delta < 1- \frac{1}{200}
    \stackrel{\eqref{e:newgamma}}{<} 1-\piece (u).
  \end{equation}
  Set $u_n$ and $u'$ as
  \begin{equation}
    \label{e:Unew}
    u'=\frac{u+u_\star}{2},\qquad  u_n  = (u + n^{-\delta}) \wedge u',
  \end{equation}
  Throughout this proof, we write $\epsilon$ and $\gamma$ for
  $\epsilon = \epsilon(u')$ and $\gamma= \gamma(u')$ as in
  \eqref{eq:epsilon} and \eqref{e:newgamma} with $u$ replaced by $u'$.

  The strategy of this proof is the following: we apply
  Proposition~\ref{pr:lotgood} to $\xi'_{u'}$ as defined in \eqref{eq:xis}.
  This will show that with high probability, there are at least $c_1 n$
  vertices in components of volume $c_1 n^{\vv_{u'(1+\epsilon)} \beta}$ in
  $\supp \xi'_{u_n } \supseteq \supp \xi'_{u'}$. In what we call the
  sprinkling construction, we then erase some of the $Y^i$'s in the
  definition of $\xi'_{u_n }$, and thereby increase the configuration
  $\xi'_{u_n }$ to a new configuration $\xi^{sp}$. By construction, the
  sprinkled configuration $\xi^{sp}$ will be close in distribution to the
  vacant set left by the random walk trajectory $X_{[0,un]}$. Moreover, we
  will prove that with high probability some of the components of
  $\supp \xi'_{u_n }$ will merge and form a component of size $\rho n$ as
  we increase $\xi'_{u_n }$ to $\xi^{sp}$, thus proving
  Theorem~\ref{th:usmall}.

  We divide the proof into the following three steps: in the first step, we
  construct the sprinkled configuration $\xi^{sp}$ and reduce
  Theorem~\ref{th:usmall} to an estimate on $\xi^{sp}$. In the second step,
  we apply Proposition~\ref{pr:lotgood} to prove that the original
  configuration $\xi'_{u_n }$ is sufficiently well-behaved. In the third
  and final step, we deduce that with high probability, $\supp \xi^{sp}$
  has a component with volume at least $\rho n$ and conclude.

  \emph{Step 1: The sprinkling construction.} For the sprinkling
  construction, we use an auxiliary probability space
  $(\{0,1\}^{M_{u_n }}, \mathcal{Q})$, for $M_{u_n }$ defined in
  \eqref{e:M}. Under the measure $\mathcal Q$, the canonical coordinates
  $(R_k)_{0 \leq k < M_{u_n }}$ are i.i.d.~Bernoulli random variables with
  parameter
  \begin{align}
    \label{e:q} q = n^{-2\delta}.
  \end{align}
  Recall that the configuration $\xi_{u_n }'$ was defined in \eqref{eq:xis}
  as the indicator function of the set of vertices not visited by $Y^i$ and
  $Z^{i,j}$ for $i < M_{u_n }$ and $j \leq \lfloor \ln n \rfloor$,
  constructed on a suitable probability space $(\Omega, \Plr)$. On
  the probability space
  $(\Omega \times \{0,1\}^{M_{u_n }}, \Plr \otimes {\mathcal Q})$,
  we will now construct from $\xi'_{u_n }$ the sprinkled configuration
  $\xi^{sp}$, roughly according the following procedure: first, we remove
  all $Y^i$'s such that $R_i=1$. If possible, we then construct a
  trajectory by linking the remaining segments with the bridges $Z^{i,j}$,
  see Figure~\ref{fig:lrb} for a sketch.
  \begin{figure}[ht]
    \begin{center}
      \includegraphics[angle=0, width=0.8\textwidth]{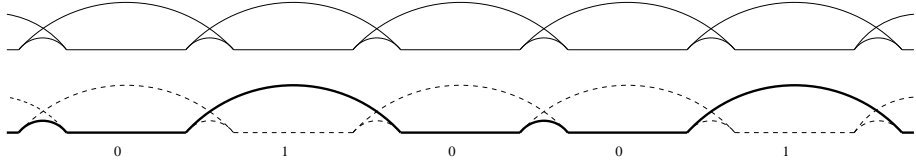}\\
      \caption{Schematic illustration the choice performed by $\psi$. The
        horizontal lines are the trajectories of the segments $Y^i$ and the arcs
        are the trajectories of the bridges $Z^{i,j}$.  The top picture
        illustrates the configuration $\xi_{u_n }'$, The
        sprinkled configuration $ \xi^{sp}$ obtained from the realization of the
        Bernoulli random variables $R_i$ associated to $Y^i$ is on the bottom
        picture.}\label{fig:lrb}
    \end{center}
  \end{figure}

  For the precise construction, we define the increasing random sequence
  $k_i$ of indices $k < M_{u_n }$ for which $R_k = 0$;  the sub-indices $i$
  here run from $0$ to the random variable
  \begin{align}
    \label{e:I} I = |\{k:0\leq k < M_{u_n},  R_k=0\}|.
  \end{align}

  We now construct a function that concatenates the $Y^i$'s with $R_i=0$,
  $i<M_{u_n }$, and some of the bridges $Z^{i,j}$ into an element of
  $D([0,un],V)$, for $u$ as in the theorem. There are two situations in
  which this construction fails. First, if $I < M_u$, then there are not
  enough segments $Y^i$ left. Second, if
  $k_{i+1} - k_i > \lfloor \ln n \rfloor$ for some $i \leq I$, there is no
  bridge  connecting $Y^{k_{i}}$ to $Y^{k_{i+1}}$. Let us hence refer to
  the intersection of the complements of these events as the good event
  $\mathcal G$,
  \begin{align}
    \label{e:calG}
    {\mathcal G} = \{I \geq M_u \} \cap
    \{ k_{i+1}-k_i \leq \lfloor \ln n \rfloor \text{ for all } 1 \leq i \leq M_u\}
    \subseteq \{0,1\}^{M_{u_n }}.
  \end{align}
  Letting $\partial$ be some arbitrary constant trajectory of length $un$,
  we now define
  $\psi:\Olr \times \{0,1\}^{M_{u_n }} \rightarrow D([0,un],V)$ as
  \begin{align}
    \psi = \begin{cases}
      \partial, \qquad  \textrm{on $\Omega \times {\mathcal G}^c$, and
        otherwise:}\\
      \mathcal X
      (Y^{k_1},Z^{k_1,k_{2} - k_1},Y^{k_2},Z^{k_2, k_3-k_2},\dots,
        Y^{k_{M_u}},Z^{k_{M_u},k_{{M_u}+1}-k_{M_u}}) \big|_{D([0,un],V)},
    \end{cases}
  \end{align}
  where $|_{D([0,un],V)}$ denotes the restriction to $D([0,un],V)$ and
  $\mathcal X$ is the concatenation mapping defined in \eqref{e:mathcalX}
  (Here we abuse the notation slightly. The mapping $\mathcal X$ takes
    infinite number of arguments, however since $un \leq M_u (L+\ell)$
    the restriction to $D([0,un],V)$ does not depend on the arguments
    which we do not specify). The sprinkled configuration is then defined
  as the indicator function of the vacant set left by the concatenated
  trajectory,
  \begin{equation}
    \label{eq:xisp}  \xi^{sp} = 1\{V \setminus X_{[0,un]}\} \circ \psi,
  \end{equation}
  where we have used the notation $X$ for the canonical coordinate process
  on the space $D([0,un], V)$.

  By construction, we then have
  $\supp \xi'_{u_n } \subseteq \supp \xi^{sp}$ on $\Omega \times \mathcal G$.
  Moreover, conditionally on $\Omega \times \mathcal G$, the concatenation
  $\psi$ is distributed according to the piecewise independent measure
  $Q^{un}$ defined in Section \ref{s:partsbridges}.

  Let us now
  see that the event $\mathcal G$ is indeed typical.  The random variable
  $I$ is binomially distributed with expectation
  $M_{u_n }(1-q) = M_u + n^{1- \piece-\delta} (1+o(1))$. With the help of a
  Chernoff bound (see  Lemma~1.1 in \cite{M89}), we find that
  \begin{align}
    \label{e:Gbd0} \mathcal{Q}[I < M_u] \leq \exp(-c_{u,\delta}
      n^{2(1-\piece-\delta)}/M_{u_n})
    \leq \exp(-c_{u,\delta} n^{1- \piece - 2\delta}).
  \end{align}
  Together with a simple union bound, this implies that
  \begin{align}
    \label{e:Gbd} {\mathcal Q}[{\mathcal G}^c] &\leq  \mathcal{Q}
    \big[\exists i \le  I: k_{i+1} - k_i > \lfloor \ln n \rfloor\big] +
    \mathcal{Q}[I < M_u] \\ &\leq c_u n^{1-\piece} q^{\lfloor \ln n
      \rfloor} + \exp(-c_{u, \delta} n^{1-\piece - 2\delta}) \nonumber\\
    &\stackrel{\eqref{e:q}}{\leq} c_u n^{1-\piece} n^{-2\delta \lfloor \ln
      n \rfloor}     + \exp(-c_{u, \delta} n^{1-\piece - 2\delta})\leq
    c_{u,\sigma,\delta} n^{-\sigma}. \nonumber
  \end{align}
  In particular, the distribution $\psi\circ(\Plr \otimes \mathcal{Q})  $
  is close to $Q$. Indeed, for any ${\mathcal F}_{un}$-measurable
  event $A$, we have
  $Q^{un}[A]= \Plr \otimes \mathcal Q[\psi^{-1}(A)|\Omega \times \mathcal G]$,
  and therefore using an easy calculation
  \begin{equation}
    \label{e:Qm-}
    | Q^{un}(A) - (\Plr \otimes \mathcal{Q}) ( \psi^{-1} (A))| \\
    \leq  \mathcal{Q} ({\mathcal G}^c) \leq c_{u,\sigma,\delta} n^{-\sigma}.
  \end{equation}

  Thanks to this estimate, we know that the sprinkled configuration
  $\xi^{sp}$ is close in distribution to the vacant set left by a
  trajectory under the piecewise independent measure $Q^{un}$.
  Together with Lemma~\ref{l:partsbridges}, we can now reduce our task to
  proving the estimate in Theorem~\ref{th:usmall} for the configuration
  left by $\xi^{sp}$. We set
  \begin{align}
    \rho = c_1/100,
  \end{align}
  where $c_1$ was defined in Proposition~\ref{pr:lotgood}. By
  Lemma~\ref{l:partsbridges},
  \begin{align}
    P \big[  |\comp^u_{\Max}| < \rho n\big] \leq Q^{un}
    \big[ |\comp_{\Max}(1 \{V \setminus X_{[0,un]}\})| < \rho n \big] +
    \exp\{ -c_{u} \ln^2 n\},
  \end{align}
  which by \eqref{e:Qm-} implies that
  \begin{align}
    \label{e:main0} P \big[  |\comp^u_{\Max}| < \rho n\big] \leq
    \Plr \otimes \mathcal{Q} \big[ |\comp_{\Max} (\xi^{sp}) | <
      \rho n \big] + c_{u,\delta, \sigma} n^{-\sigma}.
  \end{align}
  It is therefore sufficient to show that
  \begin{equation}
    \label{eq:reductusmall}
    \Plr \otimes \mathcal{Q} \Big[  |\comp_{\Max} (\xi^{sp}) | <
      \rho n  \Big] \leq c_{u, \delta, \sigma} n^{-\sigma}.
  \end{equation}

  \emph{Step 2: $\xi'_{u_n}$ is well-behaved with high probability.} In
  the second step, we apply previous estimates in order to deduce that
  $\xi'_{u_n}$ has the properties we will use to show that a component of
  size $\rho n$ appears in $\supp \xi^{sp}$.

  Let $\comp_1, \comp_2, \dots$ be the connected components of
  $\supp \xi_{u_n}'$ ordered according to their volume, $\comp_1$
  being the largest component, and define the random variable
  $\kappa \in {\mathbb N} \cup \{ \infty \}$ as the smallest integer such
  that
  \begin{equation}
    |\comp_1| + \dots + |\comp_\kappa|
    \geq  c_1 n = 100 \rho  n,
  \end{equation}
  provided such an integer exists, and $\kappa = \infty$ otherwise. Note
  that if $\supp \xi_{u_n}'$ contains many large clusters (in the sense
    of Proposition~\ref{pr:lotgood}), then $\kappa $ is small. More
  precisely, we have the following event inclusion,
  \begin{equation}
    \big\{|\{x\in V: |\comp_x (\xi_{u_n}')|
        \geq c_1 n^{\vv_{u'(1+\epsilon)} \beta} \}
      | \geq  c_1 n \big\}\subseteq
    \{\kappa \leq \lceil n^{1-\vv_{u'(1+\epsilon)} \beta} \rceil\}=:\mathcal K
    \subset \Omega.
  \end{equation}
  Hence, Proposition~\ref{pr:lotgood} (and monotonicity of $\xi'_.$) is
  more than enough to imply that
  \begin{equation}
    \label{eq:kappasmall}
    \Plr [{\mathcal K}^c] \leq c_{u,\sigma} n^{-\sigma}.
  \end{equation}
  We further define an event $\mathcal S\subset \Omega $ as the event
  that the numbers of jumps $N^i$ of all $Y^i$'s and the total length $N^{i,j}$ of all $Z^{i,j}$'s
  appearing in the construction of $\xi'_{u_n}$ do not exceed their
  expected value too significantly:
  \begin{equation}
    \label{e:calS}
    \mathcal S=\{ N^i \le 2n^{\piece},\, \forall i<M_{u_n}\}
    \cap\Big\{\sum_{i<M_{u_n}}\sum_{j\le \ln n} N^{i,j} \le
      \ln^5 n \cdot n^{1-\piece}\Big\}.
  \end{equation}
  By Lemma~\ref{lm:visits} we know that
  \begin{align}
    \label{e:Sc} \Plr [\mathcal S^c]\le c_{u, \sigma} n^{-\sigma}.
  \end{align}
  The estimates \eqref{eq:kappasmall} and \eqref{e:Sc} will allow us to
  prove the required estimate \eqref{eq:reductusmall} in the last step by
  considering only configurations $\xi'_{u_n}$ satisfying the properties
  in ${\mathcal K} \cap {\mathcal S}$.

  \emph{Step 3: $\xi^{sp}$ has a large component with high probability.}
  Finally, we prove the required estimate \eqref{eq:reductusmall} by
  showing that the random deletion of $Y^i$'s in the construction of
  $\xi^{sp}$ does make a component of size $\rho n$ appear with high
  probability.

  We define an event $\mathcal P$ as
  \begin{equation}
    \label{e:calP}
    \mathcal P=\Bigg\{
      \parbox[c]{0.7\textwidth}
      {There exists a partition of $\{1, \dots, \kappa\}$
        into sets $A$ and $B$, such that
        $\mathcal A=\cup_{a\in A} \comp_a$ is not connected to
        $\mathcal B=\cup_{b\in B} \comp_b$ in $\supp \xi^{sp}$, and
        both $|\mathcal A|$ and
        $|\mathcal B|$ are larger than  $10\rho n$.
      }
      \Bigg\}
  \end{equation}
  In analogy with the proof of Proposition~3.1 in
  \cite{ABS04}, we claim that
  \begin{equation}
    \label{eq:partition}
    \{ |\comp_{\Max} (\xi^{sp}) | < \rho n \} \cap \{ \kappa < \infty \} \subseteq \mathcal P.
  \end{equation}
  To see this, we consider the equivalence relation $\sim$ on the set
  $\{1, \dots, \kappa\}$ given by $j \sim j'$ if and only if
  $\comp_j$ is connected to $\comp_{j'}$ in $\supp \xi^{sp}$.
  Then every equivalence class corresponds to one component of
  $\supp \xi^{sp}$. In particular, if all components of $\supp \xi^{sp}$
  are smaller than $\rho n$, then the sum   of $|\comp_j|$ for all
  $j$'s in the same equivalence class must also be smaller than $\rho n$.
  So, we can partition the set $\{1, \dots, \kappa\}$ into sets $A$ and
  $B$ in such a way that equivalent indices belong to the same set and
  $\big|\sum_{a\in A}|\comp_a| -\sum_{b\in B}|\comp_b|\big| \le 2\rho n$.
  Since $\sum_j |\comp_j| \geq 100 \rho n$, we obtain that
  $\sum_{a \in A} |\comp_a|$ and
  $\sum_{b \in B} |\comp_b| \geq 10 \rho  n$, and, by construction
  of the equivalence relation $\sim$, $\cup_{a \in A} \comp_a$ is
  not connected to $\cup_{b \in B} \comp_b$ through $\supp \xi^{sp}$.
  This shows \eqref{eq:partition}.

  For subsets $F$ and $F'$ of $V$, we use the notation
  $F\overset{sp}\nleftrightarrow F'$ to denote the event
  \begin{align}
    \{ F \textrm{ and } F' \textrm{ are not connected in } \supp \xi^{sp}\}.
  \end{align}
  By \eqref{eq:partition} and ${\mathcal K} \subseteq \{ \kappa < \infty\}$,
  \begin{equation}
    \label{e:PK}
    \Plr \otimes \mathcal{Q}
    \Big[ |\comp_{\Max} (\xi^{sp}) | < \rho n \Big]
    \le \Plr [\mathcal S^c] + \Plr [\mathcal K^c]+
    \Elr [{\mathbf 1}_{\mathcal K \cap \mathcal S} \mathcal Q[\mathcal P]],
  \end{equation}
  where we have used that $\mathcal S,\mathcal K\subset \Omega$ and
  Fubini's theorem. For the sake of clarity, let us recall that
  $\mathcal Q$ is a measure on $\{0,1\}^{M_{u_n}}$ and emphasize that the
  $\mathcal Q$-probability in this last expression is computed for $Y^i$
  and $Z^{i,j}$ fixed. On $\mathcal K$, there are at most
  $2^{\lceil n^{1-\vv_{u'(1+\epsilon)} \beta} \rceil}$ ways to partition
  $\{1,\dots,\kappa \}$ into $A$, $B$. Hence, on $\mathcal K$, using the
  union bound,
  \begin{equation}
    \mathcal Q[\mathcal P]\le
    2^{\lceil n^{1-\vv_{u'(1+\epsilon)} \beta} \rceil}
    \sup \mathcal Q
    \big[\mathcal A \overset{sp}\nleftrightarrow\mathcal B \big],
  \end{equation}
  where the supremum is taken over all partitions of
  $\{1,\dots, \kappa \}$ as in \eqref{e:calP} and $\mathcal A$,
  $\mathcal B$ are defined in \eqref{e:calP}, too. By increasing the
  range of the supremum we deduce that the following estimate holds
  uniformly on the event $\mathcal K$,
  \begin{equation}
    \label{e:PKS} \mathcal Q[\mathcal P]\le
    2^{\lceil n^{1-\vv_{u'(1+\epsilon)} \beta} \rceil}
    \sup_{F,F'\subseteq V:|F|,|F'|\ge 10 \rho n}
    \mathcal Q
    [F\overset{sp}\nleftrightarrow F'],
  \end{equation}
  We will now find a bound on the event on the right-hand side, valid
  uniformly on the event $\mathcal S$. To this end, fix $Y^i$ and
  $Z^{i,j}$ such that $\mathcal S$ holds, as well as subsets $F$ and $F'$
  of $V$ containing at least $10 \rho n$ vertices. Using the expansion
  property of the graph $G$, see \eqref{eq:cheeger}, and the Max-flow
  Min-cut Theorem, we can find a collection of at least $c_5 n$ disjoint
  paths in $V$ joining the sets $F$ and $F'$ for some constant $c_5 > 0$.
  We call these paths \emph{connections}. Since, on $\mathcal S$,
  $\sum_{i<M_{U}} N^i$ is smaller
  or equal to $2 u_\star n$ (see \eqref{eq:Wj} and \eqref{e:calS}),
  we can extract from this
  collection a sub-collection $\boldsymbol C$ such that
  $|\boldsymbol{C}| = \frac{1}{2} c_5 n$, and such that all connections
  in $\boldsymbol{C}$ intersect at most $\lfloor 4 u_\star/c_5 \rfloor$
  segments $Y^i$.

  We next want to prove that with high probability, at least one of these
  connections only intersects $Y^i$'s that do not appear in $\xi^{sp}$,
  using again the concentration inequality from \cite{M89}, Lemma 1.2. To
  this end, we define the function $g$ by
  \begin{equation}
    g(R_1, \dots, R_{M_{u_n}}) = |\{\zeta \in \boldsymbol{C}: \text{for
        all $i$, either } \zeta \cap Y^i = \varnothing \text{ or } R_i=1
      \}|.
  \end{equation}
  The probability that all of the at most $\lfloor 4 u_\star/c_5 \rfloor$
  $Y^i$'s intersecting a given  $\zeta \in \boldsymbol{C}$ have an index
  $i$ with $R_i = 1$ is at least $q^{\lfloor 4 u_\star/c_5 \rfloor}$. So,
  for some $c_6 = c_6(\good,\gap,u)$, $c_7 >0$,
  \begin{equation}
    \label{e:shalom}      E^{\mathcal Q}[g] \geq  \tfrac{1}{2} c_5
    q^{\lfloor 4 u_\star/c_5 \rfloor} n =: 2c_{6} n^{1-c_7 \delta}.
  \end{equation}
  Changing one segment $Y^i$ can change the value of $g$ by at most
  $2n^\gamma $ on $\mathcal S$. Therefore, by Lemma~1.2 in \cite{M89},
  \begin{equation}
    \label{eq:Eg}
    {\mathcal Q}[ g < c_{6}n^{1-c_7 \delta} ]
    \leq 2
    \exp \Big\{ \dfrac{- c_{6}^2 n^{2(1-c_7 \delta)}}{n^{1-\piece} n^{2\piece}} \Big\}.
  \end{equation}
  If $g\ge c_6 n^{1-c_7 \delta}$, then there are at least
  $c_6 n^{1-c_7 \delta}$ disjoint connections in $\boldsymbol C$ linking
  $F$ and $F'$ and only using vertices in $\supp \xi^{sp}$. In accordance
  with \eqref{e:Unew}, we now choose $\delta$ such that
  $2c_7 \delta < \gamma$. Then, since on $\mathcal S$ the total length of
  the bridges $\sum_{i,j}N^{i,j} \leq \ln^5 n \cdot n^{1-\piece} $, the
  events $\{g \geq c_{6}n^{1-c_7 \delta}\}$ and $\mathcal S$ imply that
  at least one connection in $\boldsymbol C$ is contained in
  $\supp \xi^{sp}$, for $n\ge c_{u, \delta}$. Hence, \eqref{eq:Eg}
  implies that  uniformly on the event $\mathcal S$,
  \begin{equation}
    \label{e:spbd}  \sup_{F,F'\subset V:|F|,|F'|\ge 10 \rho n}
    \mathcal Q
    [F\overset{sp}\nleftrightarrow F'] \le
    2 \exp( - c_{6}^2n^{1-\piece- 2c_7 \delta } ).
  \end{equation}
  Inserting this estimate into \eqref{e:PKS}, noting that
  \begin{align}
    1- \gamma - 2c_7 \delta > 1 - 2 \gamma
    \stackrel{\eqref{e:newgamma}}{=} 1 - v_{u'(1+\epsilon)} \beta,
  \end{align}
  we find that, uniformly on ${\mathcal K} \cap {\mathcal S}$,
  \begin{align}
    \label{e:QPbd} \mathcal Q[\mathcal P] \le  \exp(-c_{u, \delta} n^{1-2\gamma}).
  \end{align}
  Using this estimate, together with the bound \eqref{eq:kappasmall} on
  $\Plr[{\mathcal K}^c]$ and the bound \eqref{e:Sc} on
  $\Plr[{\mathcal S}^c]$, in \eqref{e:PK}, we find
  \eqref{eq:reductusmall} for $n\ge 1$ by possibly adjusting the
  constants. This concludes the proof of Theorem~\ref{th:usmall}.
\end{proof}
\section{Uniqueness of the giant component}
\label{s:uniq}

This section contains the proof of Theorem~\ref{t:secondcomponent}, that is of
the uniqueness of the giant component. More precisely, we show that for any
choice of $\kappa >0$ and $u<u_\star$, with a high probability, the second
largest component of the vacant set $\mathcal V_n^u$ is smaller than $\kappa n$.
The sprinkling is again the major ingredient of the proof. This time, however,
we will really use the fact that $u_n-u< n^{-\delta }$.

Heuristically, our argument runs as follows. We will show that any component of
$\mathcal V_n^u$  of size at least $\kappa n$ should contain at least
$\kappa n/2$ vertices that were included in clusters of size at least
$n^{v_u\beta /2}$ of the vacant set left by segments at level $u_n$. Hence, in
order to have $|\comp_\Sec|\ge \kappa n$, there should be two groups of
such vertices which do not get connected after the sprinkling. A small
extension of the proof of the last section then shows that this happens with a
small probability.

\begin{proof}[Proof of Theorem~\ref{t:secondcomponent}]
  We choose $\gamma $, $\beta $ as in \eqref{e:newgamma} and
  recall from \eqref{e:Unew} the notation
  $u_n=(u+ n^{-\delta }) \wedge ((u+u_\star)/2)$, where $2 \delta < 1 -\gamma$.
  By decreasing $\delta$, we can also assume that $\delta<\gamma$. Recall also
  that the vacant set left by segments $\xi_{u_n}$ was defined in \eqref{eq:xi}
  (with $u_n$ replaced by $u$).

  We divide the vertices of $G$ into three sets. The vertex $x\in V$ is
  called \emph{small}, if
  \begin{equation}
    \tx(B(x,5\beta \ld n))=0, \quad |\comp_x(\xi_{u_n})|\le \ld^2 n,
    \quad\text{and}\quad
    \comp_x(\xi_{u_n})\subset B(x,\beta \ld n).
  \end{equation}
  It is called \emph{proper} (cf. Definition~\ref{def:proper}), if
  \begin{equation}
    \label{e:unz}
    \tx(B(x,5\beta \ld n))=0
    \quad\text{and}\quad
    \text{(i), (ii) of \eqref{eq:proper} hold with $h=c_3$ of
      Lemma~\ref{lm:proper}}.
  \end{equation}
  It is called \emph{bad} otherwise. We will use $\mathcal B$ to denote the set
  of bad vertices.

  The next lemma shows that the set $\mathcal B$ is small. The lemma should
  be viewed as an analogue to a non-existence of intermediate components
  in the Bernoulli percolation case.
  \begin{lemma}
    \label{l:bad}
    There exists a function $g(n)$ such that $\lim_{n\to\infty} g(n)/n=0$ and
    \begin{equation}
      \label{e:bad} \lim_{n\to\infty}
      \Plr[|\mathcal B|\ge g(n)]=0.
    \end{equation}
  \end{lemma}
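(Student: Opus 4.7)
The plan is to establish $\Elr[|\mathcal B|] = o(n)$, after which Markov's inequality yields the conclusion with $g(n) = \sqrt{n\,\Elr[|\mathcal B|]}$. By Lemma~\ref{l:txzero} applied with $r = 5\beta \ld n$ and $\Delta = (\txconst - 5\beta)\ld n > 0$, at most $n^{1-c}$ vertices of $V$ fail to have tree-like $5\beta \ld n$-neighborhoods; these are automatically bad but contribute only $o(n)$. Hence it suffices to prove $\Plr[x\in\mathcal B]\to 0$ uniformly over vertices $x$ with $\tx(B(x,5\beta \ld n))=0$.

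For such $x$, apply Proposition~\ref{pr:dominat} centred at $x$ with parameter $u_n$ and perturbation $\epsilon=\epsilon(u')$, noting that $u_n(1+\epsilon)<u_\star$, so the clusters $\comp^{u_n(1\pm\epsilon)}$, which under $\Ber_{u_n(1\pm\epsilon)}$ have the law of super-critical Galton--Watson trees with Binomial$(d-1,p_{u_n(1\pm\epsilon)})$ offspring, are both super-critical. On the coupling event of probability $1-O(n^{-2\beta})$ one has
\[\comp^{u_n(1+\epsilon)}\cap\mathbb B\ \subseteq\ \comp_x(\xi_{u_n})\cap B_x\ \subseteq\ \comp^{u_n(1-\epsilon)}.\]
Thus $x$ is small whenever $\comp^{u_n(1-\epsilon)}$ is extinct, of total size $\le\ld^2 n$, and contained in $\mathbb B$, while condition (i) of \eqref{eq:proper} is met whenever $|\comp^{u_n(1+\epsilon)}\cap\partial_i\mathbb B|\ge c_3 m_{u_n(1+\epsilon)}^{\beta\ld n}$. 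Condition (ii) of \eqref{eq:proper} holds outside an event of probability $O(n^{-\beta})$ by the union-bound-over-$y\in B_x$ argument underpinning \eqref{eq:branch}, combined with Proposition~\ref{pr:dominat} centred at each such $y$.

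What remains is the ``intermediate'' event: either $\comp^{u_n(1-\epsilon)}$ is extinct but the total cluster exceeds $\ld^2 n$, or $\comp^{u_n(1+\epsilon)}$ survives yet satisfies $0<Z_{\beta\ld n}<c_3 m_{u_n(1+\epsilon)}^{\beta\ld n}$, where $Z_k$ denotes the generation sizes. The extinct case has probability $O(e^{-c\ld^2 n})$ by the classical exponential decay of total progeny in a Galton--Watson tree conditioned to die out. For the surviving case, the Kesten--Stigum theorem applied to our Binomial$(d-1,p_{u_n(1+\epsilon)})$ process yields $Z_k/m_{u_n(1+\epsilon)}^k\to W$ almost surely, where $W$ is positive on survival and its law is absolutely continuous on $(0,\infty)$; hence $\Plr[0<W<c_3]\to 0$ as $c_3\to 0$.

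The principal obstacle is precisely this quantitative control of the intermediate regime: for a \emph{fixed} $c_3$ the probability $\Plr[0<Z_{\beta\ld n}<c_3 m_{u_n(1+\epsilon)}^{\beta\ld n}]$ is bounded below by a positive constant, so the argument requires the freedom, granted by Lemma~\ref{lm:proper} (whose constant $c_3$ may be taken arbitrarily small), to shrink $c_3$ as needed. With that flexibility, given $\eta>0$ we pick $c_3=c_3(\eta)$ so the intermediate probability is at most $\eta$, yielding $\Plr[x\in\mathcal B]\le\eta+o(1)$ and, summing over $x$, $\Elr[|\mathcal B|]\le\eta n+o(n)$. Sending $\eta\to 0$ gives $\Elr[|\mathcal B|]=o(n)$, from which the lemma follows as indicated.
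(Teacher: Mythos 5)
Your overall skeleton (Markov's inequality, Lemma~\ref{l:txzero} for the vertices without tree-like neighbourhoods, the union bound behind \eqref{eq:branch} for condition (ii) of \eqref{eq:proper}, and the reduction of the remaining cases to a comparison of two branching processes via Proposition~\ref{pr:dominat}) is the same as the paper's. The gap is exactly the step you flag as the principal obstacle, and your proposed fix does not close it. The set $\mathcal B$ in Lemma~\ref{l:bad} is defined through the \emph{fixed} constant $c_3$ of Lemma~\ref{lm:proper} (see \eqref{e:unz}); it is not a free parameter of the statement. For fixed $c_3$ your own analysis shows that the probability of the intermediate event $\{0<Z_{\beta\ld n}<c_3\,m_{u_n(1+\epsilon)}^{\beta\ld n}\}$ is bounded below by a positive constant, so the per-vertex bound $\Plr[x\in\mathcal B]\le\eta+o(1)$ is obtained only after replacing $c_3$ by $c_3(\eta)$, i.e.\ after changing the set $\mathcal B$. ``Sending $\eta\to0$'' therefore does not yield $\Elr[|\mathcal B|]=o(n)$ for the lemma's $\mathcal B$; it yields the weaker statement that for every $\eta>0$ one may choose the constant in the definition of properness so that the corresponding bad set has size at most $\eta n$ with high probability. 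In addition, your case decomposition of the intermediate regime is not exhaustive: you must also handle the discrepancy event in which the lower cluster $\comp^{u_n(1+\epsilon)}$ dies out before generation $\lfloor\beta\ld n\rfloor$ (so condition (i) fails) while the upper cluster $\comp^{u_n(1-\epsilon)}$ reaches $\partial_i\mathbb B$ or is infinite (so $x$ need not be small). This is the term the paper bounds by $q_--q_+$, the difference of extinction probabilities, via \eqref{e:branchap}, and it too is of constant order for fixed $\epsilon$.

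The paper's mechanism for making the main term genuinely $o(1)$ with $c_3$ fixed is a mismatch of levels that your proposal does not exploit: Proposition~\ref{pr:dominat} is applied with $\epsilon$ replaced by $\epsilon/2$, so the dominating-from-below cluster is distributed as $\comp_o$ under $\Ber_{u(1+\epsilon/2)}$, while the properness threshold is $N_n=c_3 n^{\beta v_{u(1+\epsilon)}}$. Since $v_\cdot$ is strictly decreasing, $N_n$ is polynomially smaller than the typical generation size $n^{\beta v_{u(1+\epsilon/2)}}$, and hence $\Plr[1\le Z^-_{r}\le N_n]\to0$ as $n\to\infty$ (by $Z_r/m^r\to W$ with $W>0$ on survival), with no shrinking of $c_3$ needed. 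The only contributions requiring smallness of $\epsilon$ are then the extinction-probability discrepancies ($q_--q_+$ and $\Plr[T_-<T_+=\infty]$), which is where the freedom noted after \eqref{eq:epsilon} is used. Without this two-level device (or some substitute for it), your argument proves a strictly weaker statement than Lemma~\ref{l:bad}.
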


  We postpone the proof of the Lemma~\ref{l:bad}  and proceed with the proof of
  Theorem~\ref{t:secondcomponent}. First, we add long-range bridges to the
  configuration $\xi_{u_n}$, that is we define $\xi'_{u_n}$ as in \eqref{eq:xis}.
  We must be careful to see that these bridges do not destroy the components of
  proper vertices in $\supp \xi_{u'}$. To this end we collect a family $\Gamma $
  of strings (that is of self-avoiding paths of length
    $\lfloor \beta \ld n\rfloor$) as in the proof of Lemma~\ref{lm:lotstrings}.
  This collection contains  $\lfloor c_3 n^{\beta v_+} \rfloor$ distinct strings
  starting at $x$ for any proper vertex $x$, as before. In particular, this
  implies that $|\Gamma |\le c_3 n^{1+\beta v_+}$. We can show, as below
  \eqref{eq:congestion}, that the number $S_y$ of strings intersecting a given
  $y\in V$ satisfies $S_y\le c_u (\log^{c_u} n) n^{\tfrac 54 v_-\beta }$. Let now
  $\mathcal S$ be the event that all segments and bridges are not too long, as in
  \eqref{e:calS}.  On $\mathcal S$, the number of strings that are intersected by
  bridges is thus at most $c_u (\log^{c'_u} n) n^{1-\gamma+ \tfrac 54 v_-\beta } $.
  We declare a proper vertex \textit{bad proper}, it at least half of the strings
  starting at this vertex is intersected by the bridges. Obviously, on
  $\mathcal S$, the set $\mathcal {BP}$ of bad proper vertices must satisfy
  $|\mathcal {BP} | \tfrac {c_3}{2} n^{\beta v_+}\le c_u (\log^{c'_u} n) n^{1-\gamma+ \tfrac 54 v_-\beta }.$
  With \eqref{e:epsv} and \eqref{e:newgamma}, it follows that
  \begin{equation}
    \label{e:BP}  |\mathcal {BP}|=o(n), \qquad\text{as $n\to\infty$, on $\mathcal S$.}
  \end{equation}
  On the other hand, the remaining proper vertices, that we call \textit{large},
  are starting vertices of at least $\tfrac12 n^{v_+\beta}$ strings which are not
  intersected by the bridges. Hence, if $x$ is large, it is contained in a
  component of $\xi '_{u_n}$ of size at least $\tfrac12 n^{v_+\beta}$. This
  implies that
  \begin{equation}
    \label{e:unxnew}
    \parbox{0.8\textwidth}{on $\mathcal S$, the number of components of $\supp \xi'_{{u_n}}$ that
      contain a large vertex is at most
      $cn^{1-\beta v_+}$. }
  \end{equation}

  We now perform the sprinkling as in Section~\ref{s:super}. Recall that on the
  probability space $(\{0,1\}^{M_{u_n}},\mathcal Q)$ we have defined
  i.i.d.~random variables $(R_k)_{0\le k< M_{u_n}}$ with success probability
  $q=n^{-2\delta}$ (cf.~\eqref{e:q}), the number of remaining segments $I$
  (cf.~\eqref{e:I}) and the good event $\mathcal G$ (cf.~\eqref{e:calG}). We
  have then constructed the sprinkled configuration $\xi^{sp}$
  (cf.~\eqref{eq:xisp}). Using  Lemma~\ref{l:partsbridges}, then the estimate
  \eqref{e:Qm-}, as $n\to\infty$,
  \begin{equation}
    \begin{split}
      \label{e:una}
      &P[|\comp_\Sec^u| \ge \kappa n]
      \le Q^{un}[|\comp_\Sec^u| \ge \kappa n]+ o(1)
      \\&\le \Plr \otimes \mathcal Q[|\comp_\Sec(\xi^{sp})|
        \ge \kappa n, {\mathcal G} , \mathcal S , |{\mathcal B}| \leq g(n)]
      + \mathcal P[\mathcal G^c] + \Plr [\mathcal S^c]
      + \Plr [|{\mathcal B}| > g(n)] + o(1) \\
      &\stackrel{\eqref{e:Gbd}, \eqref{e:Sc}, \eqref{e:bad}}{\leq} \Plr \otimes \mathcal Q[|\comp_\Sec(\xi^{sp})|\ge \kappa n,
        {\mathcal G} , \mathcal S , \{|{\mathcal B}| \leq g(n)\}] + o(1).
    \end{split}
  \end{equation}

  In order to estimate the term on the right-hand side, we claim that for any
  $\kappa>0$,
  \begin{equation}
    \label{e:large} \begin{array}{l}
      \text{on $\mathcal S\cap \mathcal G \cap \{|{\mathcal B}| \leq g(n)\}$,
        any component of $\supp \xi^{sp}$ of size $\geq \kappa n$ }\\
      \text{contains at least $\kappa n/2$ large vertices, for $n \geq c_{\kappa,u,\delta}$,}
    \end{array}
  \end{equation}
  Indeed, recall that we have divided the vertices in $\supp \xi'_{u_n}$ into
  small, bad, bad proper and large vertices. The vertices in $\supp \xi^{sp}$
  consist of these four sets, and the set
  $\{x\in V: \xi'_{u_n}(x)=0, \xi^{sp}(x)=1\}$. Let us call all the vertices in
  this last set \emph{sprinkled} vertices. Suppose now that the event
  $\mathcal S\cap \mathcal G \cap \{|{\mathcal B}| \leq g(n)\}$ occurs. Then by
  definition of $\mathcal S$ and $\mathcal G$, the number of sprinkled vertices
  is at most
  \begin{equation}
    (M_{u_n}-I) 2 n^{\gamma } + cn^{1-\gamma }\ln^5 n \le (M_{u_n} -M_u)
    2 n^{\gamma } + cn^{1-\gamma }\ln^5 n \leq  c_{u, \delta} n^{1-\delta },
  \end{equation}
  because $M_{u_n}-M_u \leq c_{u, \delta} n^{1-\piece - \delta}$ and
  $\delta < \gamma$. Consider now any component $A$ of size $\kappa n$ of
  $\supp \xi^{sp}$. Then the number of vertices in $A$ that are either bad, bad
  proper or sprinkled is at most
  $|{\mathcal B}| + |\mathcal{BP}| + c_{u, \delta} n^{1-\delta} = o(n)$, by
  definition of $g(n)$ and \eqref{e:BP}, the remaining vertices being either
  small or large. By definition of small vertex, all small vertices in
  $\supp \xi'_{u_n}$ belong to components of size at most $\ld^2 n$. Any of the
  at most $c_{u, \delta} n^{1-\delta}$ sprinkled vertices can merge at most
  $(d-1)$ such components, so the maximum number of small vertices belonging to
  the same component of $\supp \xi^{sp}$ is bounded by
  $c_{u, \delta} (\ld^2 n) n^{1-\delta},$ which is less than $o(n)$, too.
  Hence, for $n \geq c_{\kappa, u, \delta}$, at least $\kappa n/2$ of the
  vertices in $A$ are large, proving \eqref{e:large}.

  Let $\mathcal L$ be the set of large vertices.
  Defining $\mathcal P'$ to be the event
  \begin{equation}
    \label{eq:Pprime}
    \mathcal P'=\{\text{there are $A$, $B\subset \mathcal L$
        such that } |A|\ge \kappa n/2, |B|\ge \kappa n/2, \text{
        and } A\overset{sp}\nleftrightarrow B\},
  \end{equation}
  we obtain from \eqref{e:large}, for $n \geq c_{\kappa,u,\delta}$,
  \begin{equation}
    \label{e:una+}  \Plr \otimes \mathcal Q \big[|\comp_\Sec(\xi^{sp})|\ge \kappa n,
      \mathcal G,\mathcal S, \{|{\mathcal B}| \leq g(n)\}\big]\le
    \Plr \otimes \mathcal Q[\mathcal P',\mathcal S]=
    \Elr [1_{\mathcal S }\mathcal Q[\mathcal P']],
  \end{equation}
  which is essentially equivalent to the right-hand side of \eqref{e:PK}. Note
  also that since $\supp \xi'_{u_n } \subseteq \supp \xi^{sp}$, $\mathcal{P}'$
  equals
  \begin{equation*}
    \{\text{there are $A$, $B\subset \mathcal L$
        such that } |A|, |B|\ge \kappa n/2, \text{
        and } \cup_{a \in A} \comp_a(\xi_{u_n}')
      \overset{sp}\nleftrightarrow  \cup_{b \in B} \comp_b(\xi_{u_n}')\},
  \end{equation*}

  We should now bound the number of possible choices for the unions in the
  equation above. By \eqref{e:unxnew}, there are at most $n^{1-\beta v_+}$ sets
  of the form $\comp_x(\xi_{u_n}')$ with $x \in \mathcal{L}$, so there
  are at most $2^{2n^{1-\beta v_+} }$ choices for the unions in the equation
  above. Hence,
  \begin{equation}
    \label{e:unb}
    \Elr [1_{\mathcal S }\mathcal Q[\mathcal P']]
    \le 2^{2n^{1-\beta v_+} }
    \sup_{\mathcal S}
    \sup_{F,F'\subset V:|F|,|F'|\ge \kappa n/2}
    \mathcal Q[F\overset{sp}\nleftrightarrow F'].
  \end{equation}
  Repeating the argument from \eqref{e:PKS} to \eqref{e:spbd} and choosing
  $\delta $ small enough, we infer that the right-hand side of \eqref{e:unb}
  tends to zero as $n$ tends to infinity. With \eqref{e:una} and
  \eqref{e:una+}, this completes the proof of
  Theorem~\ref{t:secondcomponent}.
\end{proof}

We now prove the lemma we used in previous proof.
Due to Proposition~\ref{pr:dominat}, this proof will be
reduced to estimates on a branching process.

\begin{proof}[Proof of Lemma~\ref{l:bad}]
  Let $x\in V$ be an arbitrary vertex and let $B_x=B(x,\beta \ld n)$,
  $B_x'=B(x,5\beta \ld n)$. We write $v_+=v_{u(1+\varepsilon )}$,
  $v_-=v_{u(1-\varepsilon )}$, see \eqref{eq:muvu} for the notation. It is easy
  to see that $\mathcal B\subset \mathcal B_1\cup\dots\cup \mathcal B_4$, where
  \begin{equation}
    \begin{split}
      \mathcal B_1&=\{x\in V:\tx(B'_x)>0\},\\
      \mathcal B_2&=\{x\in V:\tx(B'_x)=0,
        |\comp_x^{\beta \ld n}(\xi_{u_n})| < c_3 n^{v_+ \beta} \text{ and }
        \comp_x(\xi_{u_n})\not\subset B_x\},\\
      \mathcal B_3&=\{x\in V:\tx(B'_x)=0,
        |\comp_x^{\beta \ld n}(\xi_{u_n})| < c_3 n^{v_+ \beta} \text{ and }
        |\comp_x(\xi_{u_n})| > \ld^2 n\},\\
      \mathcal B_4&=\{x\in V:\tx(B'_x)=0, \text{(ii) of \eqref{eq:proper}
          does not hold with $h=c_3$}\}.
    \end{split}
  \end{equation}

  By Lemma~\ref{l:txzero} we have $|\mathcal B_1|\le g(n)$ for a sequence $g$
  which decays as in the statement, deterministically. Further, by
  \eqref{eq:branch}, we have $\Plr [x\in \mathcal B_4]\le c n^{-\beta }$.
  Therefore, using the Markov inequality, there is a sequence $g(n)$ such that
  $g(n)/n$ tends to zero, such that
  $\lim_{n\to\infty} \Plr [|\mathcal B_4|\ge g(n)]=0$.

  It remains to control $\mathcal B_2$, $\mathcal B_3$. We choose any
  $\epsilon>0$ small enough such that \eqref{eq:epsilon} is satisfied, noting
  that these two constraints allow us to make $\epsilon>0$ even smaller. For
  $n \geq c_{\delta,u}$, we then have $u_n \in (u,u(1+\epsilon/4))$. Hence, the
  random sets $\comp^{u(1 \pm \epsilon/2)}$ constructed in
  Proposition~\ref{pr:dominat} (with $\epsilon$ replaced by $\epsilon/2$)
  dominate the component $\comp_y(1_B \cdot \xi_{u_n})$ from above and
  from below with probability at least $1 - c_{u,\delta, \epsilon}n^{-2 \beta},$
  for $n \geq c_{u, \delta, \epsilon}$. Recall also that
  $\comp^{u(1 \pm \epsilon/2)}$ are distributed as $\comp_o$ under
  $\Ber_{u(1 \pm \epsilon/2)}$. Let $Z^-_k$ be the branching process description
  of $\comp^{u(1+\epsilon/2)}$, that is
  $Z^-_k=|\{y\in \comp^{u(1+\epsilon/2)}, \dist(x,y)=k\}|$. Similarly,
  let $Z^+_k$ be such a description of $\comp^{u(1-\epsilon/2)}$. By our
  choice of parameters and Lemma \ref{l:interbr}, both $Z^+$ and $Z^-$
  are supercritical branching processes. We use $T_+$, $T_-$ to denote their
  extinction times, $\phi_+$, $\phi_-$ their offspring generating function, and
  $q_+$, $q_-$ their extinction probabilities. Observe that
  \begin{equation}
    \label{e:branchap}
    \lim_{\epsilon \to 0} q_--q_+=0.
  \end{equation}
  Set $r=\lfloor \beta \ld n \rfloor$, $N_n=c_3 n^{\beta v_+}$. Using
  Proposition~\ref{pr:dominat}, we get
  \begin{align}
    \label{e:unaa}
    \Plr [x\in \mathcal B_2] - c_{u, \delta, \epsilon} n^{-2\beta } &\le
    \Plr [Z^-_r\le N_n, Z^+_r\ge 1 ]\\
    &\le
    \Plr [1\le Z^-_r\le N_n] + \Plr [0=Z^-_r< Z^+_r]. \nonumber
  \end{align}
  Since $v_.$ is strictly decreasing, $N_n/ n^{\beta v_{u(1+\epsilon/2)}}\to 0$.
  Hence, the first term on the right-hand side is the probability that the
  branching process is not extinct at generation $r$, but is much smaller than
  its typical size $n^{\beta v_{u(1+\epsilon/2)}}$. This probability tends to
  $0$ as $n\to\infty$, using e.g.~Theorems 6.1, 6.2 in Chapter I, p.~9, of
  \cite{AN}. Using the fact that the generating function of $Z^\pm_r$ is the $r$-th
  iteration $\phi_\pm^{(r)}$ of $\phi $, we get that
  \begin{equation}
    \limsup_{n\to\infty}\Plr [0=Z^-_r< Z^+_r]
    =\limsup_{r\to \infty}\phi_-^{(r)}(0)-\phi_+^{(r)}(0)=q_--q_+.
  \end{equation}
  Here we have used that $q_{\pm}$ is the attractive fixed-point of
  $\phi_{\pm} $. Using \eqref{e:branchap}, this can be made arbitrarily small
  by choosing $\epsilon$ small. Inserting this back into \eqref{e:unaa}, we get
  $\lim_{n\to \infty}{\Plr[x\in \mathcal B_2]= 0}$. This implies, using
  the Markov inequality again, that $\lim_{n\to\infty}$
  $\Plr [|\mathcal B_2|\ge g(n)]= 0$ for some $g$ as in the statement.

  Similarly we have,
  \begin{equation}
    \begin{split}
      \label{e:aafdaf}
      \Plr &[x\in \mathcal B_3] - c n^{-2\beta }\le
      \Plr \Big[Z^-_r \le N_n, \sum_{k=0}^\infty Z^+_k\ge \ld^2 n\Big]
      \\&\le
      \Plr [Z^-_r\le N_n, T_-=\infty] +
      \Plr\Big[\sum_{k=0}^\infty Z^+_k\ge \ld^2 n, T_-<\infty\Big]
      \\&\le
      \Plr [1\le Z^-_r\le N_n] +
      \Plr \Big[\sum_{k=0}^\infty Z^+_k\ge \ld^2 n, T_+<\infty\Big]+
      \Plr [T_-< T_+=\infty].
    \end{split}
  \end{equation}
  The first probability on the right-hand side tends to $0$, as in the previous
  argument. By \cite{AN}, Theorem 12.3 in Chapter I, p.~52, conditioned on
  $T_+<\infty$, $Z^+$ has the law of a sub-critical branching process. Using
  this claim it is easy to show that the second probability in \eqref{e:aafdaf}
  tends to zero.  The third probability can be made arbitrarily small by
  choosing $\varepsilon$ small, by using \eqref{e:branchap} again. This then
  implies that $\lim_{n\to\infty}\Plr[|\mathcal B_3|\ge g(n)]=0$ for an
  appropriately chosen $g$, as in the previous case. This completes the proof
  of the lemma.
\end{proof}



\begin{thebibliography}{BCvdH{\etalchar{+}}05}

\bibitem[AB93]{AB}
David~J. Aldous and Mark Brown, \emph{Inequalities for rare events in
  time-reversible {M}arkov chains. {II}}, Stochastic Process. Appl. \textbf{44}
  (1993), no.~1, 15--25. \MR{MR1198660}

\bibitem[AF]{AF}
David~J. Aldous and James~A. Fill, \emph{Reversible markov chains and random
  walks on graphs}, {\tt
  http://www.stat.berkeley.edu/$\sim$aldous/RWG/book.html}.

\bibitem[ABS04]{ABS04}
Noga Alon, Itai Benjamini, and Alan Stacey, \emph{Percolation on finite graphs
  and isoperimetric inequalities}, Ann. Probab. \textbf{32} (2004), no.~3A,
  1727--1745. \MR{MR2073175}

\bibitem[Ath94]{KA}
K.~B. Athreya, \emph{Large deviation rates for branching processes. {I}.
  {S}ingle type case}, Ann. Appl. Probab. \textbf{4} (1994), no.~3, 779--790.
  \MR{MR1284985}

\bibitem[AN72]{AN}
Krishna~B. Athreya and Peter~E. Ney, \emph{Branching processes},
  Springer-Verlag, New York, 1972, Die Grundlehren der mathematischen
  Wissenschaften, Band 196. \MR{MR0373040}

\bibitem[BS08]{BS08}
Itai Benjamini and Alain-Sol Sznitman, \emph{Giant component and vacant set for
  random walk on a discrete torus}, J. Eur. Math. Soc. (JEMS) \textbf{10}
  (2008), no.~1, 133--172. \MR{MR2349899}

\bibitem[BCvdH{\etalchar{+}}05]{BCvdHSS05a}
Christian Borgs, Jennifer~T. Chayes, Remco van~der Hofstad, Gordon Slade, and
  Joel Spencer, \emph{Random subgraphs of finite graphs. {I}. {T}he scaling
  window under the triangle condition}, Random Structures Algorithms
  \textbf{27} (2005), no.~2, 137--184. \MR{MR2155704}

\bibitem[BS87]{BS87}
A.~Broder and E.~Shamir, \emph{On the second eigenvalue of random regular
  graphs}, 28th Annual Symposium on Foundations of Computer Science (Washington
  DC), IEEE Comput. Soc. Press, 1987, pp.~286--294.

\bibitem[DS06]{DS06}
Amir Dembo and Alain-Sol Sznitman, \emph{On the disconnection of a discrete
  cylinder by a random walk}, Probab. Theory Related Fields \textbf{136}
  (2006), no.~2, 321--340. \MR{MR2240791}

\bibitem[Dur96]{D05}
Richard Durrett, \emph{Probability: {T}heory and {E}xamples}, second ed.,
  Duxbury Press, Belmont, CA, 1996. \MR{MR1609153}

\bibitem[ER60]{ER60}
P.~Erd{\H{o}}s and A.~R{\'e}nyi, \emph{On the evolution of random graphs},
  Magyar Tud. Akad. Mat. Kutat\'o Int. K\"ozl. \textbf{5} (1960), 17--61.
  \MR{MR0125031}

\bibitem[Fri91]{Fri91}
Joel Friedman, \emph{On the second eigenvalue and random walks in random
  {$d$}-regular graphs}, Combinatorica \textbf{11} (1991), no.~4, 331--362.
  \MR{1137767}

\bibitem[Fri08]{Fri08}
Joel Friedman, \emph{A proof of {A}lon's second eigenvalue conjecture and
  related problems}, Mem. Amer. Math. Soc. \textbf{195} (2008), no.~910,
  viii+100. \MR{MR2437174}

\bibitem[LS08]{LS09}
Eyal Lubetzky and Allan Sly, \emph{Cutoff phenomena for random walks on random
  regular graphs}, arXiv 0812.0060, 2008.

\bibitem[LPS88]{LPS88}
A.~Lubotzky, R.~Phillips, and P.~Sarnak, \emph{Ramanujan graphs}, Combinatorica
  \textbf{8} (1988), no.~3, 261--277. \MR{MR963118}

\bibitem[McD89]{M89}
Colin McDiarmid, \emph{On the method of bounded differences}, Surveys in
  combinatorics, 1989 ({N}orwich, 1989), London Math. Soc. Lecture Note Ser.,
  vol. 141, Cambridge Univ. Press, Cambridge, 1989, pp.~148--188.
  \MR{MR1036755}

\bibitem[NP09]{NP07}
Asaf Nachmias and Yuval Peres, \emph{Critical percolation on random regular
  graphs}, arXiv 0707.2839, to appear in Random Structures Algorithms, 2009.

\bibitem[Pit08]{Pit08}
Boris Pittel, \emph{Edge percolation on a random regular graph of low degree},
  Ann. Probab. \textbf{36} (2008), no.~4, 1359--1389. \MR{MR2435852}

\bibitem[SC97]{SC97}
Laurent Saloff-Coste, \emph{Lectures on finite {M}arkov chains}, Lectures on
  probability theory and statistics ({S}aint-{F}lour, 1996), Lecture Notes in
  Math., vol. 1665, Springer, Berlin, 1997, pp.~301--413. \MR{MR1490046}

\bibitem[SS09]{SS09}
Vladas Sidoravicius and Alain-Sol Sznitman, \emph{Percolation for the vacant
  set of random interlacements}, Comm. Pure Appl. Math. \textbf{62} (2009),
  no.~6, 831--858. \MR{MR2512613}

\bibitem[Szn09a]{Szn09c}
Alain-Sol Sznitman, \emph{A lower bound on the critical parameter of
  interlacement percolation in high dimension}, preprint, 2009.

\bibitem[Szn09b]{Szn09d}
Alain-Sol Sznitman, \emph{On the domination of random walk on a discrete
  cylinder by random interlacements}, Electron. J. Probab. \textbf{14} (2009),
  no. 56, 1670--1704. \MR{MR2525107}

\bibitem[Szn09c]{Szn09b}
Alain-Sol Sznitman, \emph{Random walks on discrete cylinders and random
  interlacements}, Probab. Theory Related Fields \textbf{145} (2009), no.~1-2,
  143--174. \MR{MR2520124}

\bibitem[Szn09d]{Szn09e}
Alain-Sol Sznitman, \emph{Upper bound on the disconnection time of discrete
  cylinders and random interlacements}, Ann. Probab. \textbf{37} (2009), no.~5,
  1715--1746. \MR{MR2561432}

\bibitem[Szn09e]{Szn09}
Alain-Sol Sznitman, \emph{Vacant set of random interlacements and percolation},
  to appear in Annals of Mathematics, 2009.

\bibitem[Tei09]{Tei09}
A.~Teixeira, \emph{Interlacement percolation on transient weighted graphs},
  preprint, 2009.

\bibitem[TW10]{TW10}
Augusto Teixeira and David Windisch, \emph{On the fragmentation of a torus by
  random walk}, preprint available at http://arxiv.org/abs/1007.0902, 2010.

\bibitem[Win08]{Win08}
David Windisch, \emph{Random walk on a discrete torus and random
  interlacements}, Electron. Commun. Probab. \textbf{13} (2008), 140--150.
  \MR{MR2386070}

\end{thebibliography}

\newcommand{\etalchar}[1]{$^{#1}$}
\def\cprime{$'$}
\providecommand{\bysame}{\leavevmode\hbox to3em{\hrulefill}\thinspace}
\renewcommand\MR[1]{\relax\ifhmode\unskip\spacefactor3000
\space\fi \MRhref{#1}{#1}}
\renewcommand{\MRhref}[2]%
{\href{http://www.ams.org/mathscinet-getitem?mr=#1}{#2}}
\providecommand{\href}[2]{#2}


\end{document}